\newcommand{\rottext}[1]{\rotatebox{90}{\hbox to 20mm{\hss #1\hss}}}
\newcommand{\til}{\widetilde}
\newcommand{\N}{\mathbb{N}}
\newcommand{\R}{\mathbb{R}}
\newcommand{\argmin}{\mathrm{argmin}}
\newcommand{\kron}{\otimes}
\newcommand{\diag}{\mathrm{diag}}
\renewcommand{\phi}{\mathbf{\varphi}}
\newcommand{\CB}{\mathcal{B}}
\newcommand{\CI}{\mathcal{I}}     
\newcommand{\CJ}{\mathcal{J}}
\newcommand{\CD}{\mathcal{D}}
\newcommand{\CS}{\mathcal{S}}                          
\newcommand{\bfd}{\mathbf{d}}
\newcommand{\bfA}{\mathbf{A}}
\newcommand{\bfC}{\mathbf{C}}
\newcommand{\bfU}{\mathbf{U}}
\newcommand{\bfP}{\mathbf{P}}
\newcommand{\bfI}{\mathbf{I}}
\newcommand{\bfD}{\mathbf{D}}
\newcommand{\bfb}{\mathbf{b}}
\newcommand{\bfc}{\mathbf{c}}
\newcommand{\bfH}{\mathbf{H}}
\newcommand{\bfx}{\mathbf{x}}
\newcommand{\bfe}{\mathbf{e}}
\newcommand{\bfu}{\mathbf{u}}
\newcommand{\bfy}{\mathbf{y}}
\newcommand{\bfM}{\mathbf{M}}
\newcommand{\bfG}{\mathbf{G}}
\newcommand{\bfL}{\mathbf{L}}
\newcommand{\bfw}{\mathbf{w}}
\newcommand{\bfz}{\mathbf{z}}
\newcommand{\bfp}{\mathbf{p}}
\newcommand{\bfg}{\mathbf{g}}
\newcommand{\bfh}{\mathbf{h}}
\newcommand{\bfr}{\mathbf{r}}
\newcommand{\bfv}{\mathbf{v}}
\newcommand{\bfJ}{\mathbf{J}}
\newcommand{\bfzero}{\mathbf{0}}
\newcommand{\x}{x}														
\renewcommand{\xi}[1]{\x_{#1}}								
\newcommand{\CL}{\mathcal{L}}                 
\newcommand{\hf}{\frac{1}{2}}
\newlength\iwidth
\newlength\iheight
\newcommand{\TheTitle}{Efficient Numerical Optimization For
Susceptibility Artifact Correction Of EPI-MRI} 
\newcommand{\TheAuthors}{Jan Macdonald and Lars Ruthotto}
\title{{\TheTitle}}
\author{
  Jan Macdonald\thanks{Technische Universit\"at Berlin, Germany
  (\email{macdonald@math.tu-berlin.de}) and Emory University, Atlanta,
  GA, USA.}
  \and
  Lars Ruthotto\thanks{Emory University, Atlanta, GA, USA
  (\email{lruthotto@emory.edu}, corresponding author). The research of this author was in part supported by 
National Science Foundation (NSF) Grant DMS-1522599.}
}
\begin{document}
\maketitle
%
\begin{abstract} 
	We present two efficient numerical methods for susceptibility
	artifact correction applicable in Echo Planar Imaging (EPI), an ultra fast Magnetic Resonance Imaging (MRI)
	technique widely used in clinical applications. Both methods address a major practical drawback of EPI, the so-called susceptibility artifacts, which consist of geometrical transformations and intensity modulations.
	We consider a tailored variational image registration problem that is based on a physical distortion model and aims at minimizing the distance of two oppositely distorted images subject to invertibility constraints. 
	We follow a discretize-then-optimize approach and present a novel face-staggered discretization yielding a separable structure in the discretized distance function and the invertibility constraints.
	The presence of a smoothness regularizer renders the overall optimization problem non-separable, but we present two optimization schemes that exploit the partial separability.
	First, we derive a block-Jacobi preconditioner to be used in a Gauss-Newton-PCG method.
	Second, we consider a splitting of the separable and non-separable
	part and solve the resulting problem using the Alternating Direction
	Method of Multipliers (ADMM). We provide a detailed convergence proof for ADMM for this non-convex optimization problem.
	Both schemes are of essentially linear complexity and are suitable for parallel computing.
	A considerable advantage of the proposed schemes over established methods is the reduced time-to-solution. In our numerical experiment using high-resolution 3D imaging data, our parallel implementation of the ADMM method solves a 3D problem with more than 5 million degrees
	of freedom in less than 50 seconds on a standard laptop, which is a considerable improvement over existing methods.
\end{abstract}
%
\begin{keywords}
  Numerical Optimization, Image Registration, Echo Planar Imaging (EPI), Magnetic Resonance Imaging (MRI), Alternating Direction Method
  of Multipliers (ADMM), Preconditioning.
\end{keywords}
%
\begin{AMS}
  65K10, 92C55, 94A08.
\end{AMS}
\section{Introduction} 
\label{sec:introduction}
Echo-Planar-Imaging (EPI) is an ultra fast Magnetic Resonance Imaging
(MRI) technique that is widely used in medical imaging
applications~\cite{StehlingEtAl1991}. For example, EPI is used in the
neuroscience to accelerate the acquisition of Diffusion Tensor Images
(DTI)~\cite{LeBihanEtAl2001} or intra-operatively to guide
surgery~\cite{DagaEtAl2014}. While offering a substantial reduction of scan
time, a drawback of EPI is its high sensitivity to small inhomogeneities
of the local magnetic field. In practical applications the magnetic
field is perturbed inevitably by susceptibility variations of the object
being imaged. The strength of the inhomogeneity is correlated with the
strength of the external magnetic field~\cite{deMunckEtAl1996} and, thus,
correcting for these artifacts becomes increasingly relevant for
high-resolution MRI. 
\par
A physical model for distortions caused by susceptibility variations was derived
in~\cite{ChangFitzpatrick1992}. It was shown that the distortion consists
of two components: a geometric displacement and a modulation of image
intensities. 
It is important to note that for EPI-MRI the displacement is practically limited 
to a fixed and a priori known direction, the so-called \emph{phase encoding direction}. 
Distortions in the other directions (frequency encoding and slice selection direction) 
are negligible. It is this particular structure that we exploit in this work to obtain a partially separable objective function and efficient optimization schemes. The intensity modulation is given by the Jacobian determinant
of the geometric transformation and ensures mass-preservation under the
assumption that the Jacobian determinant is strictly positive almost
everywhere. As mentioned in~\cite{ChangFitzpatrick1992} this property
needs to be ensured by judicious choice of the measurement parameters.
\par
In recent years many approaches for susceptibility artifact correction
were proposed; see, e.g.,~\cite{HollandEtAl2010} for an extensive overview. Most of them employ the physical distortion model in~\cite{ChangFitzpatrick1992}, and fall into one of two categories. One approach is to obtain a
\emph{field map}, which is an estimate of the field inhomogeneity, from a
reference scan and apply the physical model for susceptibility
artifacts~\cite{Jezzard2012,JezzardEtAl1995}. 
Alternatively, the field map can be estimated using an additional EPI image with reversed
phase-encoding gradients and thus opposite deformations.
The estimation problem can be phrased as a
nonlinear image registration problem as originally proposed
in~\cite{ChangFitzpatrick1992}.
This approach, commonly referred to as \emph{reversed gradient} method, is taken in the following. 
\par
There are several numerical implementations of the reversed gradient method, for
example,~\cite{IrfanogluEtAl2015,RuthottoEtAl2013hysco,RuthottoEtAlPMB2012,HollandEtAl2010,SkareAndersson2005,AnderssonEtAl2003}. 
Given the two images, the goal is to estimate the field inhomogeneity such that
the resulting deformations render both images as similar as possible to
one another. Recently, several studies have shown the (often superior
with respect to the field map approaches) quality of reversed gradient
approaches, e.g., in Arterial Spin Labeling
(ASL)~\cite{MadaiEtAl2016}, quantitative MRI~\cite{HongEtAl2015}, and
perfusion weighted MRI~\cite{VardalEtAl2013}. 
\par
Despite the increasing popularity of reversed gradient methods, relatively little attention 
has been paid to their efficient numerical implementation.
Although the methods in~\cite{IrfanogluEtAl2015,RuthottoEtAl2013hysco,RuthottoEtAlPMB2012,HollandEtAl2010,SkareAndersson2005,AnderssonEtAl2003}
are all based on the same physical distortion model, they employ
different discretizations and optimization strategies.
Hessian-based minimization schemes are used in~\cite{RuthottoEtAl2013hysco,RuthottoEtAlPMB2012,HollandEtAl2010,SkareAndersson2005,AnderssonEtAl2003}. 
As to be expected, the computationally most expensive step in these iterative approaches is
computing the search direction, which requires approximately solving a linear system.
An often neglected aspect is the impact of the chosen discretization on the complexity of this step.
As we show in this work, a careful numerical discretization that is motivated by the physical distortion model
can be exploited to substantially reduce the computational cost of this step.
\par
In this paper, we present and compare two novel fast and scalable numerical
optimization schemes that accelerate reversed gradient based
susceptibility artifact correction. Similar
to~\cite{RuthottoEtAl2013hysco,RuthottoEtAlPMB2012,HollandEtAl2010}, we
consider a variational formulation consisting of a distance functional
and a regularization functional that improves robustness against noise.
We use a discretize-then-optimize paradigm and follow the general
guidelines described in~\cite{Modersitzki2009}. In contrast to existing
works, we derive a face-staggered discretization that exploits the fact
that, in EPI correction, displacements are practically limited along one a priori
known direction. We show that this discretization leads to a separable
structure of the discrete distance function, which results in a block-diagonal Hessian, whereas the smoothness
regularizer yields global coupling but has exploitable structure. We propose two approaches to
exploit this structure for fast numerical optimization: First, we
construct a parallel block-Jacobi preconditioner for a Gauss-Newton
method. Second, we derive a completely parallelizable algorithm that aims at minimizing the non-convex objective function using the framework of the
Alternating Direction Method of Multipliers (ADMM)~\cite{BoydEtAl2011}. 
\par
The complexity of the first method is linear in the number of faces in the
computational mesh. The complexity of the second method is essentially
linear, however, using ADMM the problem decouples into
small-dimensional subproblems that can be solved efficiently and in
parallel. As to be expected and shown in our experiments, this comes at
the cost of an increased number of iterations and therefore the choice
of method depends on the computational platform employed.
\par
Our proposed schemes exploit the specific structure of the distortion
model in EPI-MRI, where displacements due to susceptibility artifacts
only occur in one spatial dimension. In contrast to that, displacements can occur in all
spatial dimensions in general image registration problems. This is why the numerical techniques in this paper differ from existing
efficient solvers for general image registration problems, such as,
e.g.\,~\cite{CachierEtAl2003,HakerEtAl2004,VercauterenEtAl2009,MangBiros2015InexactNewton}.
\par
The paper is organized as follows. \Cref{sec:math} introduces the
forward and inverse problem of susceptibility artifact correction.
\Cref{sec:discretization} describes the discretization using a
face-staggered grid for the displacement. \Cref{sec:numerical_optimization}
describes the optimization methods. \Cref{sec:experiments} outlines the
potential of the method using real-life data. Finally, \cref{sec:summary} concludes the paper.


\section{Mathematical Formulation} 
\label{sec:math}
In this section, we briefly review the physical distortion model derived in~\cite{ChangFitzpatrick1992} and the variational formulation of EPI susceptibility artifact correction also used in~\cite{RuthottoEtAl2013hysco,RuthottoEtAlPMB2012}. For clarity of presentation, we limit the discussion to the three-dimensional case, which is most relevant for our applications.
\par
Let us first derive the forward problem. Let $\Omega \subset \R^3$ be a
rectangular domain of interest, let $v\in\R^3$ denote the phase-encoding
direction, and let the magnitude of the field inhomogeneity at a point
$x \in\Omega$ be $b(x)$, where in the forward problem $b: \Omega \to
\R$ is assumed to be known. As derived in~\cite{ChangFitzpatrick1992},
the distorted measurement $\CI_v : \R^3 \to \R$ and the undistorted
image $\CI : \R^3  \to \R$ satisfy
\begin{equation}\label{eq:distortionModel}
	\CI(x) = \CI_v(x + b(x) v) \cdot \det \nabla (x + b(x) v) = \CI_v(x + b(x) v) \cdot (1+ v^\top\nabla b(x)),
\end{equation}
where $\det \nabla (x + b(x) v)$ denotes the Jacobian determinant of the transformation $x + b(x) v$.
As in~\cite{Modersitzki2009} we assume that the images are continuously differentiable and compactly supported in $\Omega$.
Note that the Jacobian determinant simplifies to a directional derivative since displacements are limited along one line. Similarly, let $\CI_{-v}$ denote a second image acquired with phase-encoding direction $-v$ but otherwise unchanged imaging parameters. Using the physical distortion model~\cref{eq:distortionModel}, we have
\begin{equation}\label{eq:reversedGradient}
	\CI(x) = \CI_v(x + b(x) v) \cdot (1+ v^\top\nabla b(x)) = \CI_{-v}(x - b(x) v) \cdot (1 - v^\top\nabla b(x)).
\end{equation}
\par
In the inverse problem, both the inhomogeneity $b$ and the undistorted
image $\CI$ are unknown. However, given two images $\CI_v$ and
$\CI_{-v}$ acquired with phase-encoding directions $v$ and $-v$,
respectively, the goal is to estimate  $b$ such that the second equality
in~\cref{eq:reversedGradient} holds approximately. Commonly a simple
$L^2$ distance term is used, i.e.,
\begin{equation}\label{eq:DistContinuous}
	\CD(b) = \hf \int_\Omega \left(\CI_v(x + b(x) v) \cdot (1+
	v^\top\nabla b(x)) - \CI_{-v}(x - b(x) v) \cdot
	(1-v^\top\nabla b(x))\right)^2 dx.
\end{equation}
Minimizing the distance term alone is an ill-posed problem and thus
regularization is added;
see~\cite{RuthottoEtAl2013hysco,RuthottoEtAlPMB2012,HollandEtAl2010}.
In~\cite{deMunckEtAl1996}, methods for computing the field inhomogeneity
$b$ from susceptibility properties of the brain are developed. In this
work it is shown that the field inhomogeneity should be in the Sobolev
space $H^1(\Omega)$ and thus, we consider the smoothness regularizer
\begin{equation}\label{eq:RegContinuous}
	\CS(b) = \hf \int_{\Omega} \left\| \nabla b(x) \right\|^2 dx.
\end{equation}
It is important to note that the physical distortion
model~\cref{eq:distortionModel} only holds if the Jacobian determinants
for both phase encoding directions are strictly positive for almost all $x\in\Omega$; see~\cite{ChangFitzpatrick1992}.  Therefore, as firstly suggested in~\cite{RuthottoEtAl2013hysco,RuthottoEtAlPMB2012}, we impose a constraint on the Jacobian determinant
\begin{equation}\label{eq:JacConstr}
	 -1 \leq  v^\top \nabla b(x)  \leq 1 \quad \text{ for almost all } \quad x \in \Omega.
\end{equation}
As also discussed~\cite[Sec. 3]{RuthottoMod2015}, we restrict the set of feasible field inhomogeneities to a closed ball $\CB$ with respect to the $L^\infty$-norm whose radius depends only on the diameter of $\Omega$. To this end, note that both the distance and the regularization functional vanish for any large enough constant field inhomogeneity $b$, due to the fact that $\CI_v$ and $\CI_{-v}$ are supported within the bounded set $\Omega$. However, these global minimizers of $\CD(b)$ and $\CS(b)$ would be implausible in practical applications.
This leads to the variational problem
\begin{equation}\label{eq:varProb}
	\min_{b\in\CB} \left\{ \CJ(b) = \CD(b) + \alpha \CS(b)\right\} \quad \text{ subject to } -1
	\leq v^\top\nabla b(x) \leq 1,\quad  \forall x \in \Omega,
\end{equation}
where the parameter $\alpha>0$ balances between minimizing the distance
and the regularity of the solution. There is no general rule for
choosing an ``optimal'' regularization parameter especially for nonlinear
inverse problems, however, several criteria such as generalized cross
validation~\cite{GolubEtAl1979,HaberOldenburg2000},
L-curve~\cite{Hansen1998}, or discrepancy principle~\cite{Vogel2002} are
commonly used. In this paper, we assume that $\alpha$ is chosen by the
user and in our numerical experiments we show the robustness of the
proposed optimization with respect to the choice of $\alpha$; see \cref{par:2d_example}. 
Following the guidelines
in~\cite{Modersitzki2009} we first discretize the variational problem,
see \cref{sec:discretization}, and then discuss numerical methods
for solving the discrete optimization problem in \cref{sec:numerical_optimization}.


\section{Discretization} 
\label{sec:discretization}
In this section, we derive a face-staggered discretization of the variational
problem~\cref{eq:varProb} that leads to a separable structure of the
discretized distance function. 
\par
Our notation follows the general guidelines in~\cite{Modersitzki2009}. For ease of presentation, we consider a rectangular domain  $\Omega =
(0,1)\times(0,1)\times(0,1) \subset \R^3$ and assume that the phase encoding direction and thus the direction of the distortion is aligned with the first coordinate axis. In other words, we assume \ $v = e_1$, where $e_1$ is the first unit vector.
In our experience, this is not a practical limitation since image data can be adequately
rearranged. To simplify our notation, we assume that $\Omega$ is divided
into $m^3$ voxels with edge length $h=1/m$ in all three coordinate
directions.  Our implementation supports arbitrary numbers of voxels and
anisotropic voxel sizes. The images $\CI_{v}$ and $\CI_{-v}$ are assumed
to be compactly supported and continuously differentiable functions. In
practice, a continuous image model is built from discrete data by using
interpolation; see~\cite[Sec.~3]{Modersitzki2009} for details.
\par
To obtain a separable structure of the discrete distance term, we
discretize the field inhomogeneity, $b$, by a vector $\bfb \in
\R^{(m+1)m^2}$ on the $x_1$-faces of a regular grid as visualized in \cref{fig:compare_grids}. For brevity, we denote the number of $x_1$-faces by  $n={(m+1)m^2}$. Clearly, the elements of the vector $\bfb$ can be accessed using linear indices or sub indices 
\begin{equation*}
	\bfb_{ijk} = b\left((i-1)h, (j-0.5)h,  (k-0.5)h\right),\quad
	\text{for}\quad i=1,\ldots,m+1\quad\text{and}\quad
	j,k=1,\ldots,m. 
\end{equation*}
The restriction of the field inhomogeneities $b$ to the $L^\infty$-ball $\CB$ is discretized by restricting $\bfb$ to a symmetric closed box $\Sigma\subseteq\R^{n}$ with edge length $2 \cdot {\rm{diam}(\Omega)}$ around the origin. In our numerical experiments, it was not necessary to enforce this constraint.
The distance functional~\cref{eq:DistContinuous} is approximated by a
midpoint rule. To this end, the geometric transformation and the
intensity modulation in~\cref{eq:distortionModel} are approximated in
the cell-centers by simple averaging and short finite differences,
respectively.
\begin{figure}[t]
  \begin{center}
    \input{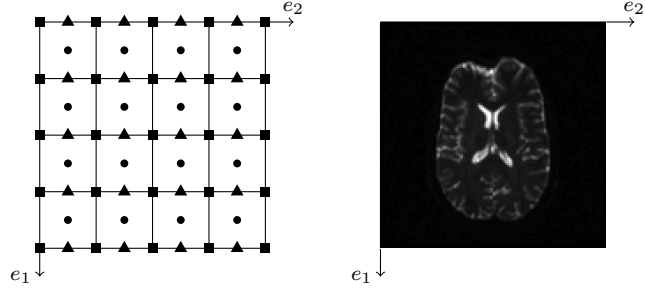}
  \end{center}
\caption{Cell-centered grid (circles), nodal grid (squares), and $e_1$-staggered grid
(triangles) on a
2-dimensional box-domain with $4\times 4$ voxels (left). Example of a
real-life deformed image (right). Data is courtesy of Harald Kugel,
University Hospital M\"unster, Germany, cf.~\cref{sec:experiments}.}
\label{fig:compare_grids}
\end{figure}
To compute the field inhomogeneity in the cell-centers given an $e_1$-staggered discretization, we use the averaging matrix
\[
	\bfA_1 = \bfI\left(m^2\right)\kron\til\bfA_1,\quad \til\bfA_1 =
\frac{1}{2}\begin{pmatrix} 1 & 1 &  & 0 \\  & \ddots & \ddots
  &  \\ 0 &  & 1 & 1\end{pmatrix}\in\R^{m\times m+1},
\]
where $\kron$ denotes the Kronecker product of two matrices and $\bfI(k)$ denotes the identity
matrix of size $k\times k$. From
that we obtain the discretized displacement in $e_1$-direction using 
\begin{equation}\label{eq:A}
	\bfA = \begin{pmatrix}
	\bfA_1\\\mathbf{0}\\\mathbf{0}\end{pmatrix}\in\R^{3m^3\times
	n},
\end{equation}
where $\mathbf{0}\in\R^{m^3 \times n}$ is a matrix of all zeroes.
\par
Similarly, we discretize the first partial differential operator using short finite differences matrices
\begin{equation}\label{eq:D1}
  \bfD_1 = \bfI\left(m^2\right) \kron\til \bfD(m+1,h), \text{ where } \quad \til\bfD(m,h) =
  \frac{1}{h}\begin{pmatrix} -1 & 1 & & 0\\ & \ddots & \ddots & \\ 0 &
  & -1 & 1\end{pmatrix} \in \R^{(m-1)\times m}.
\end{equation}
Combining~\cref{eq:A} and~\cref{eq:D1} the distance functional~\cref{eq:DistContinuous} is approximated by
\begin{equation}\label{eq:D}
	D(\bfb) = \frac{h^3}{2} \big\| \CI_{v}(\bfx + \bfA \bfb) \odot
	(\bfe + \bfD_1 \bfb) - \CI_{-v}(\bfx - \bfA \bfb) \odot (\bfe - \bfD_1 \bfb)  \big\|^2,
\end{equation}
where $\bfx$ are the cell-centers of a uniform grid, $\CI_{\pm v}(\bfx \pm \bfA \bfb)$ denote vectors of the image intensities at the shifted grid points, $\odot$ denotes the component-wise Hadamard product, and $\bfe\in\R^{m^3}$ is a vector of all ones.
\par
Similarly, we approximate the regularization functional~\cref{eq:RegContinuous} by
\begin{equation} \label{eq:S}
S(\bfb) = \frac{h^3}{2}\left(\|\bfD_1\bfb\|^2+\left\|\bfD_2\bfb\right\|^2+\left\|\bfD_3\bfb\right\|^2\right),
\end{equation}
where the discrete partial derivative operators are
\begin{equation}\label{eq:d2d3}
	\bfD_2 = \bfI(m) \kron \til\bfD(m,h) \kron \bfI(m+1)
	\quad \text{ and } \quad \bfD_3 = \til\bfD(m,h) \kron
	\bfI(m\cdot (m+1)).
\end{equation}
\par
Using~\cref{eq:D1} to discretize the constraint and combining~\cref{eq:D} and~\cref{eq:S}, we obtain the finite-dimensional optimization problem 
\begin{equation}\label{eq:discprob}
  \min_{\bfb \in \Sigma} \left\{J(\bfb) = D(\bfb) + \alpha S(\bfb) \right\} \quad \textrm{ subject to }
  -1\leq \bfD_1\bfb \leq 1.
\end{equation}
All components of the optimization problem are smooth, the regularizer is a convex quadratic, the constraints are convex, however, the distance function is in general non-convex.
The non-convexity is addressed using a multilevel strategy; see~\cref{sub:multilevel}.
In this paper, we consider two approaches for incorporating the linear inequality constraints.
First, we consider augmenting the objective function by the penalty term introduced in~\cite{RuthottoEtAlPMB2012}
\begin{equation}\label{eq:P}
	P(\bfb) = \beta  h^3  \bfe^\top \varphi(\bfD_1\bfb),\quad
	\varphi\colon\R\rightarrow\R\colon x \mapsto 
		\begin{cases}
			\frac{x^4}{(1-x^2)} &,\text{ if }-1 \leq x \leq 1\\
			\infty				&,\text{ else}
		\end{cases}
	,	
\end{equation}
where $\beta>0$ is a penalty parameter and $\varphi$ is applied
component-wise.
The penalty function can be interpreted as a simplification of the volume term of the hyperelastic regularization function in~\cite{BurgerEtAl2013} for transformations that are limited along one spatial directions.
As also proposed in~\cite{RuthottoEtAlPMB2012} we use an
inexact Gauss-Newton method for solving the resulting smooth and
box-constrained optimization problem
\begin{equation}\label{eq:JGN}
  \min_{\bfb \in \Sigma} \left\{J_{\rm GN}(\bfb) = D(\bfb) + \alpha S(\bfb) +P(\bfb)\right\}.
\end{equation}
In contrast to~\cite{RuthottoEtAlPMB2012} we exploit the separable structure of the
data term and the penalty to develop an effective and efficient
preconditioner. Second, we exploit the separable structure of the
distance term and the constraints in~\cref{eq:discprob} and derive an efficient implementation
of a Sequential Quadratic Programming (SQP) method. Both approaches are
described in more detail in \cref{sec:numerical_optimization}.
\par
In preparation for efficient, Hessian-based optimization, we quickly derive the gradients and approximate Hessians of the discretized distance, regularizer, and penalty term. 
Let us denote the residual of the distance term~\cref{eq:D} by
\[\bfr(\bfb) =
\CI_v(\bfx+\bfA\bfb)\odot(\bfe+\bfD_1\bfb)-\CI_{-v}(\bfx-\bfA\bfb)\odot(\bfe-\bfD_1\bfb).\]
Then, denoting the Jacobian matrix of the residual for a fixed $\bfb$ by $\bfJ_\bfr(\bfb) \in \R^{m^3 \times n}$, we obtain the gradient and Gauss-Newton approximation to the Hessian as
\begin{equation*}
	\nabla D(\bfb) = h^3\  \bfJ_{\bfr}(\bfb)^\top\bfr(\bfb)\quad \text{ and } \quad \nabla^2 D(\bfb) \approx \bfH_D(\bfb) = h^3\ \bfJ_{\bfr}(\bfb)^\top
	\bfJ_{\bfr}(\bfb).
\end{equation*}
The Jacobian  of the residual, is given by
\begin{align*}
  \bfJ_{\bfr}(\bfb) &= \diag\left(\partial_1 \CI_v(\bfx+\bfA\bfb)\odot(\bfe+\bfD_1\bfb)\right)\bfA_1+\diag\left(\CI_{v}(\bfx+\bfA\bfb)\right)\bfD_1\\
  &\quad + \diag\left(\partial_1 \CI_{-v}(\bfx-\bfA\bfb)\odot(\bfe-\bfD_1\bfb)\right)\bfA_1+\diag\left(\CI_{-v}(\bfx-\bfA\bfb)\right)\bfD_1,
\end{align*}
where $\partial_1 \CI_{\pm v}(\bfx\pm \bfA \bfb)$ denote vectors of the first partial derivatives of the images evaluated at the shifted grid points and $\diag(\bfv)\in\R^{n\times n}$ is a diagonal matrix with diagonal entries given by the vector $\bfv \in \R^n$.
\par
Unlike the distance function, the regularizer introduces coupling along all dimensions.
The Hessian and the gradient of the regularization function~\cref{eq:S} are
\begin{equation*}
	\nabla^2 S = h^3 \left(
	\bfD_1^\top\bfD_1+\bfD_2^\top\bfD_2+\bfD_3^\top\bfD_3\right)
	\quad \text{ and } \quad 
	\nabla S(\bfb) =\nabla^2 S\ \bfb.
\end{equation*}
It is important to note though that $\nabla^2 S$ is of Block-Toeplitz-Toeplitz-Block~(BTTB) structure. Finally, the gradient and Hessian of the penalty term~\cref{eq:P} are
\begin{equation*}
	\nabla P(\bfb) = \beta h^3 \bfD_1 \varphi^\prime(\bfD_1\bfb)
	\quad \text{ and } \quad \nabla^2 P(\bfb) = \beta h^3 \bfD_1^\top
	\diag(\varphi^{\prime\prime}(\bfD_1\bfb))\bfD_1,
\end{equation*}
for feasible $\bfb$, i.e., $-1 \leq \bfD_1\bfb \leq 1$. Here, the functions
\begin{equation}\label{eq:dphi}
	\varphi^\prime(x) = \frac{4x^3-2x^5}{(1-x^2)^2}
	\quad \text{ and } \quad
	\varphi^{\prime\prime}(x) = \frac{2x^2(x^4-3x^2+6)}{(1-x^2)^3}, \quad x \in [-1,1]
\end{equation}
are applied component-wise. Combining the above derivations, the
gradients and approximated Hessians of the discrete objective functions are
\begin{equation}\label{eq:dJ}
	\nabla J(\bfb) = \nabla D(\bfb) + \alpha \nabla S(\bfb) \text{ and } \bfH_J(\bfb) = \bfH_D (\bfb) + \alpha
	\nabla^2 S +\gamma \bfI(n)
\end{equation}
and
\begin{equation}\label{eq:dJGN}
  \nabla J_{\rm GN}(\bfb) = \nabla J(\bfb) + \nabla
  P(\bfb) \text{ and } \bfH_{J_{\rm GN}}(\bfb) = \bfH_J(\bfb) + \nabla^2 P(\bfb)
\end{equation}
respectively, where $\gamma>0$ ensures that the approximate Hessians are symmetric positive definite. In our numerical experiments, we choose $\gamma = 10^{-3}$.

\begin{figure}
  \scriptsize
  \setlength\iwidth{22mm}
  \setlength\iheight{22mm}
  \renewcommand{\rottext}[1]{\rotatebox{90}{\hbox to 23mm{\hss #1\hss}}}
  \begin{center}
    \begin{tabular}{c@{\ }cccccc}
      & $\bfH_D$ & $\nabla^2S$ & $\bfD_1^\top\bfD_1$ &
      $\bfD_2^\top\bfD_2$ & $\nabla^2P$ \\
	 \rottext{face-staggered} &
%
%
\begin{tikzpicture}

\begin{axis}[%
width=\iwidth,
height=\iheight,
at={(0\iwidth,0\iheight)},
scale only axis,
xmin=0,
xmax=21,
xtick={\empty},
y dir=reverse,
ymin=0,
ymax=21,
ytick={\empty},
axis background/.style={fill=white}
]
\addplot [color=black,mark size=1pt,only marks,mark=*,mark options={solid},forget plot]
  table[row sep=crcr]{%
1	1\\
1	2\\
2	1\\
2	2\\
2	3\\
3	2\\
3	3\\
3	4\\
4	3\\
4	4\\
4	5\\
5	4\\
5	5\\
6	6\\
6	7\\
7	6\\
7	7\\
7	8\\
8	7\\
8	8\\
8	9\\
9	8\\
9	9\\
9	10\\
10	9\\
10	10\\
11	11\\
11	12\\
12	11\\
12	12\\
12	13\\
13	12\\
13	13\\
13	14\\
14	13\\
14	14\\
14	15\\
15	14\\
15	15\\
16	16\\
16	17\\
17	16\\
17	17\\
17	18\\
18	17\\
18	18\\
18	19\\
19	18\\
19	19\\
19	20\\
20	19\\
20	20\\
};
\end{axis}
\end{tikzpicture}
%
%
\begin{tikzpicture}

\begin{axis}[%
width=\iwidth,
height=\iheight,
at={(0\iwidth,0\iheight)},
scale only axis,
xmin=0,
xmax=21,
xtick={\empty},
y dir=reverse,
ymin=0,
ymax=21,
ytick={\empty},
axis background/.style={fill=white}
]
\addplot [color=black,mark size=1pt,only marks,mark=*,mark options={solid},forget plot]
  table[row sep=crcr]{%
1	1\\
1	2\\
1	6\\
2	1\\
2	2\\
2	3\\
2	7\\
3	2\\
3	3\\
3	4\\
3	8\\
4	3\\
4	4\\
4	5\\
4	9\\
5	4\\
5	5\\
5	10\\
6	1\\
6	6\\
6	7\\
6	11\\
7	2\\
7	6\\
7	7\\
7	8\\
7	12\\
8	3\\
8	7\\
8	8\\
8	9\\
8	13\\
9	4\\
9	8\\
9	9\\
9	10\\
9	14\\
10	5\\
10	9\\
10	10\\
10	15\\
11	6\\
11	11\\
11	12\\
11	16\\
12	7\\
12	11\\
12	12\\
12	13\\
12	17\\
13	8\\
13	12\\
13	13\\
13	14\\
13	18\\
14	9\\
14	13\\
14	14\\
14	15\\
14	19\\
15	10\\
15	14\\
15	15\\
15	20\\
16	11\\
16	16\\
16	17\\
17	12\\
17	16\\
17	17\\
17	18\\
18	13\\
18	17\\
18	18\\
18	19\\
19	14\\
19	18\\
19	19\\
19	20\\
20	15\\
20	19\\
20	20\\
};
\end{axis}
\end{tikzpicture}
%
%
\begin{tikzpicture}

\begin{axis}[%
width=\iwidth,
height=\iheight,
at={(0\iwidth,0\iheight)},
scale only axis,
xmin=0,
xmax=21,
xtick={\empty},
y dir=reverse,
ymin=0,
ymax=21,
ytick={\empty},
axis background/.style={fill=white}
]
\addplot [color=black,mark size=1pt,only marks,mark=*,mark options={solid},forget plot]
  table[row sep=crcr]{%
1	1\\
1	2\\
2	1\\
2	2\\
2	3\\
3	2\\
3	3\\
3	4\\
4	3\\
4	4\\
4	5\\
5	4\\
5	5\\
6	6\\
6	7\\
7	6\\
7	7\\
7	8\\
8	7\\
8	8\\
8	9\\
9	8\\
9	9\\
9	10\\
10	9\\
10	10\\
11	11\\
11	12\\
12	11\\
12	12\\
12	13\\
13	12\\
13	13\\
13	14\\
14	13\\
14	14\\
14	15\\
15	14\\
15	15\\
16	16\\
16	17\\
17	16\\
17	17\\
17	18\\
18	17\\
18	18\\
18	19\\
19	18\\
19	19\\
19	20\\
20	19\\
20	20\\
};
\end{axis}
\end{tikzpicture}
%
%
\begin{tikzpicture}

\begin{axis}[%
width=\iwidth,
height=\iheight,
at={(0\iwidth,0\iheight)},
scale only axis,
xmin=0,
xmax=21,
xtick={\empty},
y dir=reverse,
ymin=0,
ymax=21,
ytick={\empty},
axis background/.style={fill=white}
]
\addplot [color=black,mark size=1pt,only marks,mark=*,mark options={solid},forget plot]
  table[row sep=crcr]{%
1	1\\
1	6\\
2	2\\
2	7\\
3	3\\
3	8\\
4	4\\
4	9\\
5	5\\
5	10\\
6	1\\
6	6\\
6	11\\
7	2\\
7	7\\
7	12\\
8	3\\
8	8\\
8	13\\
9	4\\
9	9\\
9	14\\
10	5\\
10	10\\
10	15\\
11	6\\
11	11\\
11	16\\
12	7\\
12	12\\
12	17\\
13	8\\
13	13\\
13	18\\
14	9\\
14	14\\
14	19\\
15	10\\
15	15\\
15	20\\
16	11\\
16	16\\
17	12\\
17	17\\
18	13\\
18	18\\
19	14\\
19	19\\
20	15\\
20	20\\
};
\end{axis}
\end{tikzpicture}
%
%
\begin{tikzpicture}

\begin{axis}[%
width=\iwidth,
height=\iheight,
at={(0\iwidth,0\iheight)},
scale only axis,
xmin=0,
xmax=21,
xtick={\empty},
y dir=reverse,
ymin=0,
ymax=21,
ytick={\empty},
axis background/.style={fill=white}
]
\addplot [color=black,mark size=1pt,only marks,mark=*,mark options={solid},forget plot]
  table[row sep=crcr]{%
1	1\\
1	2\\
2	1\\
2	2\\
2	3\\
3	2\\
3	3\\
3	4\\
4	3\\
4	4\\
4	5\\
5	4\\
5	5\\
6	6\\
6	7\\
7	6\\
7	7\\
7	8\\
8	7\\
8	8\\
8	9\\
9	8\\
9	9\\
9	10\\
10	9\\
10	10\\
11	11\\
11	12\\
12	11\\
12	12\\
12	13\\
13	12\\
13	13\\
13	14\\
14	13\\
14	14\\
14	15\\
15	14\\
15	15\\
16	16\\
16	17\\
17	16\\
17	17\\
17	18\\
18	17\\
18	18\\
18	19\\
19	18\\
19	19\\
19	20\\
20	19\\
20	20\\
};
\end{axis}
\end{tikzpicture}
	  \rottext{nodal} & 
%
%
\begin{tikzpicture}

\begin{axis}[%
width=\iwidth,
height=\iheight,
at={(0\iwidth,0\iheight)},
scale only axis,
xmin=0,
xmax=26,
xtick={\empty},
y dir=reverse,
ymin=0,
ymax=26,
ytick={\empty},
axis background/.style={fill=white}
]
\addplot [color=black,mark size=1pt,only marks,mark=*,mark options={solid},forget plot]
  table[row sep=crcr]{%
1	1\\
1	2\\
1	6\\
1	7\\
2	1\\
2	2\\
2	3\\
2	6\\
2	7\\
2	8\\
3	2\\
3	3\\
3	4\\
3	7\\
3	8\\
3	9\\
4	3\\
4	4\\
4	5\\
4	8\\
4	9\\
4	10\\
5	4\\
5	5\\
5	9\\
5	10\\
6	1\\
6	2\\
6	6\\
6	7\\
6	11\\
6	12\\
7	1\\
7	2\\
7	3\\
7	6\\
7	7\\
7	8\\
7	11\\
7	12\\
7	13\\
8	2\\
8	3\\
8	4\\
8	7\\
8	8\\
8	9\\
8	12\\
8	13\\
8	14\\
9	3\\
9	4\\
9	5\\
9	8\\
9	9\\
9	10\\
9	13\\
9	14\\
9	15\\
10	4\\
10	5\\
10	9\\
10	10\\
10	14\\
10	15\\
11	6\\
11	7\\
11	11\\
11	12\\
11	16\\
11	17\\
12	6\\
12	7\\
12	8\\
12	11\\
12	12\\
12	13\\
12	16\\
12	17\\
12	18\\
13	7\\
13	8\\
13	9\\
13	12\\
13	13\\
13	14\\
13	17\\
13	18\\
13	19\\
14	8\\
14	9\\
14	10\\
14	13\\
14	14\\
14	15\\
14	18\\
14	19\\
14	20\\
15	9\\
15	10\\
15	14\\
15	15\\
15	19\\
15	20\\
16	11\\
16	12\\
16	16\\
16	17\\
16	21\\
16	22\\
17	11\\
17	12\\
17	13\\
17	16\\
17	17\\
17	18\\
17	21\\
17	22\\
17	23\\
18	12\\
18	13\\
18	14\\
18	17\\
18	18\\
18	19\\
18	22\\
18	23\\
18	24\\
19	13\\
19	14\\
19	15\\
19	18\\
19	19\\
19	20\\
19	23\\
19	24\\
19	25\\
20	14\\
20	15\\
20	19\\
20	20\\
20	24\\
20	25\\
21	16\\
21	17\\
21	21\\
21	22\\
22	16\\
22	17\\
22	18\\
22	21\\
22	22\\
22	23\\
23	17\\
23	18\\
23	19\\
23	22\\
23	23\\
23	24\\
24	18\\
24	19\\
24	20\\
24	23\\
24	24\\
24	25\\
25	19\\
25	20\\
25	24\\
25	25\\
};
\end{axis}
\end{tikzpicture}
%
%
\begin{tikzpicture}

\begin{axis}[%
width=\iwidth,
height=\iheight,
at={(0\iwidth,0\iheight)},
scale only axis,
xmin=0,
xmax=26,
xtick={\empty},
y dir=reverse,
ymin=0,
ymax=26,
ytick={\empty},
axis background/.style={fill=white}
]
\addplot [color=black,mark size=1pt,only marks,mark=*,mark options={solid},forget plot]
  table[row sep=crcr]{%
1	1\\
1	2\\
1	6\\
2	1\\
2	2\\
2	3\\
2	7\\
3	2\\
3	3\\
3	4\\
3	8\\
4	3\\
4	4\\
4	5\\
4	9\\
5	4\\
5	5\\
5	10\\
6	1\\
6	6\\
6	7\\
6	11\\
7	2\\
7	6\\
7	7\\
7	8\\
7	12\\
8	3\\
8	7\\
8	8\\
8	9\\
8	13\\
9	4\\
9	8\\
9	9\\
9	10\\
9	14\\
10	5\\
10	9\\
10	10\\
10	15\\
11	6\\
11	11\\
11	12\\
11	16\\
12	7\\
12	11\\
12	12\\
12	13\\
12	17\\
13	8\\
13	12\\
13	13\\
13	14\\
13	18\\
14	9\\
14	13\\
14	14\\
14	15\\
14	19\\
15	10\\
15	14\\
15	15\\
15	20\\
16	11\\
16	16\\
16	17\\
16	21\\
17	12\\
17	16\\
17	17\\
17	18\\
17	22\\
18	13\\
18	17\\
18	18\\
18	19\\
18	23\\
19	14\\
19	18\\
19	19\\
19	20\\
19	24\\
20	15\\
20	19\\
20	20\\
20	25\\
21	16\\
21	21\\
21	22\\
22	17\\
22	21\\
22	22\\
22	23\\
23	18\\
23	22\\
23	23\\
23	24\\
24	19\\
24	23\\
24	24\\
24	25\\
25	20\\
25	24\\
25	25\\
};
\end{axis}
\end{tikzpicture}
%
%
\begin{tikzpicture}

\begin{axis}[%
width=\iwidth,
height=\iheight,
at={(0\iwidth,0\iheight)},
scale only axis,
xmin=0,
xmax=26,
xtick={\empty},
y dir=reverse,
ymin=0,
ymax=26,
ytick={\empty},
axis background/.style={fill=white}
]
\addplot [color=black,mark size=1pt,only marks,mark=*,mark options={solid},forget plot]
  table[row sep=crcr]{%
1	1\\
1	2\\
2	1\\
2	2\\
2	3\\
3	2\\
3	3\\
3	4\\
4	3\\
4	4\\
4	5\\
5	4\\
5	5\\
6	6\\
6	7\\
7	6\\
7	7\\
7	8\\
8	7\\
8	8\\
8	9\\
9	8\\
9	9\\
9	10\\
10	9\\
10	10\\
11	11\\
11	12\\
12	11\\
12	12\\
12	13\\
13	12\\
13	13\\
13	14\\
14	13\\
14	14\\
14	15\\
15	14\\
15	15\\
16	16\\
16	17\\
17	16\\
17	17\\
17	18\\
18	17\\
18	18\\
18	19\\
19	18\\
19	19\\
19	20\\
20	19\\
20	20\\
21	21\\
21	22\\
22	21\\
22	22\\
22	23\\
23	22\\
23	23\\
23	24\\
24	23\\
24	24\\
24	25\\
25	24\\
25	25\\
};
\end{axis}
\end{tikzpicture}
%
%
\begin{tikzpicture}

\begin{axis}[%
width=\iwidth,
height=\iheight,
at={(0\iwidth,0\iheight)},
scale only axis,
xmin=0,
xmax=26,
xtick={\empty},
y dir=reverse,
ymin=0,
ymax=26,
ytick={\empty},
axis background/.style={fill=white}
]
\addplot [color=black,mark size=1pt,only marks,mark=*,mark options={solid},forget plot]
  table[row sep=crcr]{%
1	1\\
1	6\\
2	2\\
2	7\\
3	3\\
3	8\\
4	4\\
4	9\\
5	5\\
5	10\\
6	1\\
6	6\\
6	11\\
7	2\\
7	7\\
7	12\\
8	3\\
8	8\\
8	13\\
9	4\\
9	9\\
9	14\\
10	5\\
10	10\\
10	15\\
11	6\\
11	11\\
11	16\\
12	7\\
12	12\\
12	17\\
13	8\\
13	13\\
13	18\\
14	9\\
14	14\\
14	19\\
15	10\\
15	15\\
15	20\\
16	11\\
16	16\\
16	21\\
17	12\\
17	17\\
17	22\\
18	13\\
18	18\\
18	23\\
19	14\\
19	19\\
19	24\\
20	15\\
20	20\\
20	25\\
21	16\\
21	21\\
22	17\\
22	22\\
23	18\\
23	23\\
24	19\\
24	24\\
25	20\\
25	25\\
};
\end{axis}
\end{tikzpicture}
%
%
\begin{tikzpicture}

\begin{axis}[%
width=\iwidth,
height=\iheight,
at={(0\iwidth,0\iheight)},
scale only axis,
xmin=0,
xmax=26,
xtick={\empty},
y dir=reverse,
ymin=0,
ymax=26,
ytick={\empty},
axis background/.style={fill=white}
]
\addplot [color=black,mark size=1pt,only marks,mark=*,mark options={solid},forget plot]
  table[row sep=crcr]{%
1	1\\
1	2\\
2	1\\
2	2\\
2	3\\
3	2\\
3	3\\
3	4\\
4	3\\
4	4\\
4	5\\
5	4\\
5	5\\
6	6\\
6	7\\
7	6\\
7	7\\
7	8\\
8	7\\
8	8\\
8	9\\
9	8\\
9	9\\
9	10\\
10	9\\
10	10\\
11	11\\
11	12\\
12	11\\
12	12\\
12	13\\
13	12\\
13	13\\
13	14\\
14	13\\
14	14\\
14	15\\
15	14\\
15	15\\
16	16\\
16	17\\
17	16\\
17	17\\
17	18\\
18	17\\
18	18\\
18	19\\
19	18\\
19	19\\
19	20\\
20	19\\
20	20\\
21	21\\
21	22\\
22	21\\
22	22\\
22	23\\
23	22\\
23	23\\
23	24\\
24	23\\
24	24\\
24	25\\
25	24\\
25	25\\
};
\end{axis}
\end{tikzpicture}%
    \end{tabular}
  \end{center}
\caption{Sparsity
patterns of the (approximated) Hessians of the distance function, smoothness
regularizer, its first and second term, and of the Hessian of the penalty function (from left to right) for the $e_1$-staggered discretization (top row) and the nodal
discretization used
in~\cite{RuthottoEtAl2013hysco,RuthottoEtAlPMB2012} (bottom row) for the
$4\times 4$ example shown in \cref{fig:compare_grids}. Note that the
Hessian of the distance term, the first term of the regularizer and the
Hessian of the penalty function are separable with respect to image
columns for the face-staggered discretization. Coupling is only introduced by the second term of the Hessian
of the smoothness regularizer.}
\label{fig:compare_sparsity}
\end{figure}
The optimization methods presented in the following section exploit the sparsity structure of the Hessian matrix. Due to the choice of the average operator $\bfA$ and the short finite difference operator $\bfD_1$, the Hessian of the distance function has a block-diagonal structure with
tridiagonal blocks of size $(m+1)\times(m+1)$; see also \cref{fig:compare_sparsity}. Thus, minimizing the distance term would decouple into several smaller optimization problems. 
The Hessian of the regularizer, $\nabla^2S$, is a discrete version of the negative Laplacian on
$\Omega$ with homogeneous Neumann boundary conditions and has a banded
structure. The term $\bfD_1^\top\bfD_1$ has the same block-diagonal
structure with tridiagonal blocks of size $(m+1)\times(m+1)$ as
$\nabla^2D$. So the coupling introduced by the regularizer comes from the
terms $\bfD_2^\top\bfD_2$ and $\bfD_3^\top\bfD_3$ only.
The Hessian of the penalty, $\nabla^2P$, has the same block-diagonal
structure with tridiagonal blocks as $\nabla^2D$.
\Cref{fig:compare_grids,fig:compare_sparsity} show a
comparison of the proposed face-staggered discretization and the nodal
discretization used in~\cite{RuthottoEtAl2013hysco,RuthottoEtAlPMB2012}
as well as the resulting sparsity patterns of the terms of the
approximate Hessians $\bfH_J$ and $\bfH_{J_{\rm GN}}$.

\section{Numerical Optimization} 
\label{sec:numerical_optimization}
In this section, we propose two efficient iterative methods for solving the discretized constrained optimization problem~\cref{eq:discprob}. Both methods exploit the separability achieved by
the face-staggered discretization and are scalable in the sense that
their complexity grows \emph{linearly} or \emph{essentially linearly} with
the number of unknowns in the discrete optimization problem.
In \cref{sub:gauss_newton_pcg} we  derive an inexact Gauss-Newton method
with a novel parallel block-Jacobi preconditioner and prove its
convergence. In \cref{sub:admm} we use
the Alternating Direction Method of Multipliers (ADMM) to decouple the
optimization into small subproblems that can be solved efficiently and
in parallel. Traditional convergence results for ADMM such as in
\cite{BoydEtAl2011} do not hold here, due to the non-convexity of the
objective function $J(\bfb)$. Thus, in \cref{sub:admmconv} we prove
convergence results for ADMM specific to problem~\cref{eq:discprob}. Finally, in \cref{sub:multilevel} we describe the coarse-to-fine multilevel strategy used in our experiments. Throughout this section, we denote the iteration counter by superscripts. 

\subsection{Gauss-Newton-PCG} 
\label{sub:gauss_newton_pcg}
The first approach is an inexact Gauss-Newton method (GN-PCG) for solving the
penalty formulation~\cref{eq:JGN}.
Starting with
$\bfb^0\equiv 0$, the $k$-th step of the iteration reads
\begin{equation}\label{eq:GNstep}
  \bfb^{k+1} = \bfb^k - \lambda^k
  \bfH_{J_{\rm GN}}(\bfb^k)^{-1}\nabla J_{\rm GN}(\bfb^k),
\end{equation}
where $\bfH_{J_{\rm GN}}(\bfb^k)$ and $\nabla J_{\rm GN}(\bfb^k)$ are computed according
to~\cref{eq:dJGN} and $\lambda^k$ is a step-size satisfying the
Wolfe conditions \cite[Ch. 3]{NocedalWright2006}.
This step-size choice guarantees the convergence of the
GN-PCG method to a stationary point even if the linear system in~\cref{eq:GNstep} is solved only approximately. More
precisely, instead of~\cref{eq:GNstep} we consider the iterative scheme
\begin{equation}\label{eq:inexactGNstep}
  \bfb^{k+1} = \bfb^k + \lambda^k\bfd^k
\end{equation}
with $\lambda^k$ chosen as before and step directions $\bfd^k$
satisfying
\begin{equation}\label{eq:inexactGNsolve}
\left\|\nabla J_{\rm GN}(\bfb^k) + \bfH_{J_{\rm GN}}(\bfb^k)\bfd^k \right\|\leq \eta \left\|\nabla
J_{\rm GN}(\bfb^k)\right\|,
\end{equation}
where $0<\eta<1$ is also known as a forcing parameter and controls the accuracy of the PCG method. In our numerical experiments, we choose $\eta=10^{-1}$.
\begin{theorem}
  For any feasible starting guess $\bfb^0$ with $J_{\rm GN}(\bfb^0)<\infty$ and sufficiently small $\eta$, the iterative scheme~\cref{eq:inexactGNstep} converges to a stationary point of $J_{\rm GN}(\bfb)$, i.e.,  $\left\|\nabla J_{\rm GN}(\bfb^k)\right\|\rightarrow 0$ as $k\rightarrow\infty$.
\end{theorem}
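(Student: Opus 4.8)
The plan is to reduce the statement to the classical global convergence theory for line-search methods, namely Zoutendijk's theorem \cite[Ch.~3]{NocedalWright2006}, and to verify its hypotheses for the specific objective $J_{\rm GN}$. Since every $\lambda^k$ satisfies the Wolfe conditions, the sufficient-decrease part yields $J_{\rm GN}(\bfb^{k+1})\le J_{\rm GN}(\bfb^k)$, so all iterates stay in the sublevel set $\CL=\{\bfb: J_{\rm GN}(\bfb)\le J_{\rm GN}(\bfb^0)\}$ (note the barrier $\varphi=\infty$ outside $[-1,1]$ automatically keeps the iterates feasible, as infeasible points violate the Armijo condition). First I would extract from this the decisive geometric fact that the iterates stay uniformly away from the singular set of the penalty: because $D,S\ge 0$, finiteness of $J_{\rm GN}(\bfb^0)$ forces $\beta h^3\,\bfe^\top\varphi(\bfD_1\bfb^k)\le J_{\rm GN}(\bfb^0)$, and since $\varphi(x)\to\infty$ as $x\to\pm1$ there is a $\delta\in(0,1)$ with $\|\bfD_1\bfb^k\|_\infty\le 1-\delta$ for all $k$. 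On the corresponding region the (smooth, compactly supported) image model and its derivatives are bounded, $\varphi''$ is bounded, and $\nabla^2S$ is constant, so $\nabla J_{\rm GN}$ is Lipschitz continuous with a uniform constant and $J_{\rm GN}\ge 0$ is bounded below -- exactly the standing assumptions of Zoutendijk's theorem.

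Next I would establish uniform spectral bounds on the Gauss-Newton matrix. From~\cref{eq:dJ} and~\cref{eq:dJGN} we have $\bfH_{J_{\rm GN}}(\bfb)=\bfH_D(\bfb)+\alpha\nabla^2S+\gamma\bfI(n)+\nabla^2P(\bfb)$, where $\bfH_D$, $\nabla^2S$, and $\nabla^2P$ are symmetric positive semidefinite on $\CL$ (for $\nabla^2P$ this uses $\varphi''\ge 0$ on $(-1,1)$, cf.~\cref{eq:dphi}). Hence $\gamma\bfI(n)\preceq\bfH_{J_{\rm GN}}(\bfb)$ gives the lower bound $\lambda_{\min}\ge\gamma$, while continuity of all terms on the region $\|\bfD_1\bfb\|_\infty\le 1-\delta$ yields a uniform upper bound $\lambda_{\max}\le M$. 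Consequently $\bfH_{J_{\rm GN}}(\bfb^k)$ has condition number at most $\kappa=M/\gamma$, independently of $k$.

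With these bounds I would control the inexact direction. Writing $\bfd^k=-\bfH_{J_{\rm GN}}(\bfb^k)^{-1}\nabla J_{\rm GN}(\bfb^k)+\bfH_{J_{\rm GN}}(\bfb^k)^{-1}\bfr^k$ with residual $\bfr^k$ obeying $\|\bfr^k\|\le\eta\|\nabla J_{\rm GN}(\bfb^k)\|$ from~\cref{eq:inexactGNsolve}, the spectral bounds give $-\nabla J_{\rm GN}(\bfb^k)^\top\bfd^k\ge(\tfrac1M-\tfrac{\eta}{\gamma})\|\nabla J_{\rm GN}(\bfb^k)\|^2$ and $\|\bfd^k\|\le\tfrac{1+\eta}{\gamma}\|\nabla J_{\rm GN}(\bfb^k)\|$, so the cosine of the angle $\theta_k$ between $\bfd^k$ and $-\nabla J_{\rm GN}(\bfb^k)$ satisfies
\[
\cos\theta_k\ \ge\ \frac{\gamma/M-\eta}{1+\eta}\ =:\ c .
\]
This is precisely where the hypothesis of \emph{sufficiently small} $\eta$ enters: choosing $\eta<\gamma/M$ makes $c>0$, so each $\bfd^k$ is a genuine descent direction (which in turn guarantees that a Wolfe step-size exists) and the angle condition holds uniformly.

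Finally I would combine the pieces: Zoutendijk's theorem gives $\sum_k\cos^2\theta_k\,\|\nabla J_{\rm GN}(\bfb^k)\|^2<\infty$, and since $\cos\theta_k\ge c>0$ uniformly, the series $\sum_k\|\nabla J_{\rm GN}(\bfb^k)\|^2$ converges, forcing $\|\nabla J_{\rm GN}(\bfb^k)\|\to 0$. I expect the main obstacle -- and the only step genuinely using the structure of this problem rather than textbook line-search theory -- to be the first one: certifying that the barrier term $P$ confines the iterates to a region where $\nabla J_{\rm GN}$ is uniformly Lipschitz and $\bfH_{J_{\rm GN}}$ has uniformly bounded condition number. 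Once that uniform region is secured, the rest is a standard perturbation argument around the exact Gauss-Newton direction.
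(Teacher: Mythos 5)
Your proposal is correct and follows essentially the same route as the paper's proof: verify Zoutendijk's hypotheses for $J_{\rm GN}$, establish uniform spectral bounds $\gamma \le \lambda \le M$ for $\bfH_{J_{\rm GN}}$ on the sublevel set, deduce the uniform angle bound $\cos\theta_k \ge (\gamma/M - \eta)/(1+\eta) > 0$ for $\eta < \gamma/M$ (identical to the paper's $(M_1-\eta M_2)/((1+\eta)M_2)$ with $M_1=\gamma$, $M_2=M$), and conclude from the Zoutendijk series. The only difference is in how the uniform bounds are certified --- the paper invokes compactness of the constrained sublevel set $S_0 \subseteq \Sigma$, whereas you derive the confinement $\|\bfD_1\bfb^k\|_\infty \le 1-\delta$ directly from the barrier and finiteness of $J_{\rm GN}(\bfb^0)$ together with boundedness of the compactly supported image model --- which is a cosmetic variation, not a different argument.
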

The proof of the above result is divided into two Lemmas. First, we
verify that the assumptions of Zoutendijk's result are satisfied for our
problem; see, e.g., \cite[Thm. 3.2]{NocedalWright2006}.
\begin{lemma} \label{lemma1}
	Let $\bfb^0$ be a starting guess with $J_{\rm GN}(\bfb^0) < \infty$ and
	let $S_0 \subseteq \Sigma$ be the sub-level set
	\begin{equation*}
		S_0 = \left\lbrace\,\bfb\in\Sigma\,:\,J_{\rm GN}(\bfb) \leq J_{\rm GN}(\bfb^0) \right\rbrace.
	\end{equation*} 
	Then, the objective function in~\cref{eq:JGN} satisfies
\end{lemma}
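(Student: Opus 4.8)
The plan is to read the conclusion of \cref{lemma1} as the three standing hypotheses of Zoutendijk's theorem \cite[Thm.~3.2]{NocedalWright2006}: that $J_{\rm GN}$ is bounded below, that it is continuously differentiable on an open set containing $S_0$, and that $\nabla J_{\rm GN}$ is Lipschitz continuous on $S_0$. I would verify these three properties in turn, treating the summands $D$, $\alpha S$, and $P$ separately and exploiting that the restriction to the box $\Sigma$ already makes $S_0$ bounded.

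Boundedness below is immediate: the data misfit $D(\bfb)=\frac{h^3}{2}\|\bfr(\bfb)\|^2$ is a squared norm, the regularizer $S$ in~\cref{eq:S} is a sum of squared norms, and the penalty density $\varphi$ is nonnegative on $[-1,1]$, so $J_{\rm GN}(\bfb)\ge 0$ for every feasible $\bfb$. For continuous differentiability I would note that on the open strictly feasible set $\{\bfb:\|\bfD_1\bfb\|_\infty<1\}$ the functions $\varphi,\varphi',\varphi''$ of~\cref{eq:dphi} are smooth, while $D$ and $S$ are smooth wherever the image model $\CI_{\pm v}$ is; it therefore suffices to place $S_0$ in the interior of the feasible set.

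The crucial step, and the one I expect to be the main obstacle, is to show that $S_0$ stays uniformly away from the singular boundary $\|\bfD_1\bfb\|_\infty=1$, on which $\varphi$, and with it $\nabla P$ and $\nabla^2 P$, blow up. This is where the coercive choice of penalty pays off: since $D,\alpha S\ge 0$, every $\bfb\in S_0$ satisfies $P(\bfb)\le J_{\rm GN}(\bfb)\le J_{\rm GN}(\bfb^0)<\infty$, and because $\varphi(x)\to\infty$ as $x\to\pm 1$ this bound forces $\|\bfD_1\bfb\|_\infty\le 1-\delta$ for some $\delta>0$ depending only on $\beta$, $h$, and $J_{\rm GN}(\bfb^0)$. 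Consequently $S_0$ lies in the compact convex set $K_\delta=\{\bfb\in\Sigma:\|\bfD_1\bfb\|_\infty\le 1-\delta\}$, on which $\varphi'$ and $\varphi''$ are bounded and $\varphi'$ is Lipschitz.

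It then remains to assemble the Lipschitz bound for $\nabla J_{\rm GN}$ on $K_\delta$. The gradient $\nabla S=\nabla^2 S\,\bfb$ is linear, hence globally Lipschitz; $\nabla P=\beta h^3\bfD_1^\top\varphi'(\bfD_1\bfb)$ is Lipschitz on $K_\delta$ because $\varphi'$ has a bounded derivative $\varphi''$ there and $\bfD_1$ is linear; and $\nabla D=h^3\bfJ_{\bfr}^\top\bfr$ is Lipschitz provided the interpolated image model is smooth enough that $\CI_{\pm v}$ and $\partial_1\CI_{\pm v}$ are Lipschitz on the bounded set $K_\delta$, so that both $\bfr$ and $\bfJ_{\bfr}$ are. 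I would make this smoothness of the image model explicit, since the stated $C^1$ assumption only yields continuity of $\partial_1\CI_{\pm v}$ and the honest requirement is a $C^{1,1}$ (e.g.\ spline) interpolant; given this, a uniform bound on $\|\bfH_{J_{\rm GN}}(\bfb)\|$ over the convex set $K_\delta$ and the mean value theorem deliver the Lipschitz constant, and taking the sum of the three contributions closes the verification of the Zoutendijk hypotheses.
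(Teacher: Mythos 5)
Your proposal is correct and rests, at its core, on the same fact as the paper's proof: the sub-level set $S_0$ is compactly contained in the open region $\{\bfb : J_{\rm GN}(\bfb)<\infty\} = \{\bfb : \|\bfD_1\bfb\|_\infty < 1\}$ where the penalty is smooth, and on a compact convex neighborhood of $S_0$ the derivatives of $J_{\rm GN}$ are bounded, which yields the Lipschitz property. The execution differs in a useful way, however. The paper argues abstractly: $S_0\subseteq\Sigma$ is compact, $J_{\rm GN}$ is (claimed to be) twice continuously differentiable on any open subset of $\{J_{\rm GN}<\infty\}$, and one then simply chooses nested bounded open neighborhoods $S_0\subseteq S_1\subseteq\overline{S_1}\subseteq S_2\subseteq\{J_{\rm GN}<\infty\}$. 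You instead make the containment quantitative: since $D,\alpha S\geq 0$, every $\bfb\in S_0$ satisfies $P(\bfb)\leq J_{\rm GN}(\bfb^0)$, and the coercivity of $\varphi$ near $\pm 1$ forces $\|\bfD_1\bfb\|_\infty\leq 1-\delta$ with an explicit margin $\delta$; your compact convex set $K_\delta$ plays the role of the paper's abstract $\overline{S_1}$, and the Lipschitz constant is then assembled term by term. This buys two things. First, explicit constants rather than a pure existence argument. Second, and more importantly, your term-by-term treatment of $\nabla D$ exposes an assumption the paper leaves silent: Lipschitz continuity of $\nabla D$ (a fortiori the twice continuous differentiability of $J_{\rm GN}$ invoked in the paper's one-line proof) requires $\partial_1\CI_{\pm v}$ to be Lipschitz, i.e., a $C^{1,1}$ or $C^2$ image interpolant, whereas \cref{sec:discretization} only assumes the images to be continuously differentiable. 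Under the stated $C^1$ assumption the data term's gradient need not be Lipschitz, so the paper's claim is not fully justified as written; your proposal handles this honestly by making the extra smoothness requirement explicit.
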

\begin{enumerate}
	\item $J_{\rm GN}(\bfb) \geq 0$  for all $\bfb \in \Sigma$.
	\item $J_{\rm GN}$ is continuously differentiable on a
	  neighborhood $S_1$ of $S_0$.
	\item $\nabla J_{\rm GN}$ is Lipschitz continuous on $S_1$.
\end{enumerate}
\begin{proof}
	The above properties follow immediately, since $S_0 \subseteq
	\Sigma$ is compact and $J_{\rm GN}$ is twice continuously differentiable
	on any open set $S$ with $S \subseteq
	\lbrace\,\bfb\in\R^n\,:\,J_{\rm GN}(\bfb)<\infty\,\rbrace$.
	Choosing open neighborhoods $S_1$ and $S_2$ such that
	$S_0\subseteq S_1\subseteq \overline{S_1}\subseteq S_2\subseteq
	\lbrace\,\bfb\in\R^n\,:\,J_{\rm GN}(\bfb)<\infty\,\rbrace$ where $S_1$
	is bounded and $\overline{S_1}$ denotes its closure proves the
	claim.
      \end{proof}
Due to the above lemma and the fact that $\lambda^k$ and $\bfd^k$
satisfy the Wolfe conditions, Zoutendijk's result holds and in particular
\begin{equation*}
	\sum_{k\geq 0} \cos^2 \theta_k \left\| \nabla J_{\rm GN}(\bfb^k)\right\|^2 < \infty,
\end{equation*}
where $\theta_k$ denotes the angle between $-\nabla J_{\rm GN}(\bfb^k)$ and $\bfd^k$. To show that $\nabla J_{\rm GN}(\bfb^k) \to 0$ as $k \to \infty$, we show in the following that $\cos\theta_k$ is uniformly bounded away from zero.
\begin{lemma}
	For all $k$, $\bfd^k$ is a descent direction and there exists $M > 0$ such that 
	\begin{equation*}
		\cos\theta_k \geq M > 0.
	\end{equation*}
\end{lemma}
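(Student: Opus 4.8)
The plan is to exploit that the Gauss-Newton approximate Hessian $\bfH_{J_{\rm GN}}(\bfb^k)$ is uniformly positive definite with a uniformly bounded spectrum along the iteration, and then to turn the inexact solve condition~\cref{eq:inexactGNsolve} into quantitative bounds on the step direction. First I would record the spectral bounds. For the lower bound, each summand in $\bfH_{J_{\rm GN}}(\bfb) = h^3\bfJ_\bfr(\bfb)^\top\bfJ_\bfr(\bfb) + \alpha\nabla^2 S + \gamma\bfI(n) + \nabla^2 P(\bfb)$ is symmetric positive semidefinite for feasible $\bfb$: the Gauss-Newton term and $\nabla^2 P(\bfb) = \beta h^3\bfD_1^\top\diag(\varphi''(\bfD_1\bfb))\bfD_1$ are of Gram type (using $\varphi''\geq 0$ on $[-1,1]$, which follows from~\cref{eq:dphi} since the numerator factor $x^4-3x^2+6$ is strictly positive), and $\nabla^2 S$ is a discrete Laplacian. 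Hence $\bfH_{J_{\rm GN}}(\bfb)\succeq\gamma\bfI(n)$, so its smallest eigenvalue is at least $\gamma$ and its inverse satisfies $\|\bfH_{J_{\rm GN}}(\bfb)^{-1}\|\leq\gamma^{-1}$. For the upper bound I would invoke \cref{lemma1}: the entries of $\bfH_{J_{\rm GN}}$ are continuous on the compact closure $\overline{S_1}$, so there exists a finite $\Lambda>0$ with $\bfH_{J_{\rm GN}}(\bfb)\preceq\Lambda\bfI(n)$ and therefore $\bfH_{J_{\rm GN}}(\bfb)^{-1}\succeq\Lambda^{-1}\bfI(n)$ there.

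Next I would verify by induction that the iterates remain in $S_0\subseteq\overline{S_1}$, so that these uniform bounds apply at every step: the starting guess lies in $S_0$, and whenever $\bfd^k$ is a descent direction the Wolfe line search decreases $J_{\rm GN}$, keeping $\bfb^{k+1}\in S_0$. Writing $g=\nabla J_{\rm GN}(\bfb^k)$, $H=\bfH_{J_{\rm GN}}(\bfb^k)$, $d=\bfd^k$, and letting $r=g+Hd$ denote the residual, condition~\cref{eq:inexactGNsolve} reads $\|r\|\leq\eta\|g\|$ and $d=H^{-1}(r-g)$. The descent estimate is then
\begin{equation*}
  -g^\top d = g^\top H^{-1} g - g^\top H^{-1} r \geq \Lambda^{-1}\|g\|^2 - \gamma^{-1}\eta\|g\|^2 = \left(\Lambda^{-1}-\gamma^{-1}\eta\right)\|g\|^2,
\end{equation*}
using $g^\top H^{-1}g\geq\Lambda^{-1}\|g\|^2$ together with $|g^\top H^{-1}r|\leq\gamma^{-1}\|g\|\,\|r\|\leq\gamma^{-1}\eta\|g\|^2$. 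This quantity is strictly positive, so $\bfd^k$ is a descent direction, precisely when $\eta<\gamma/\Lambda$, which is the meaning of ``sufficiently small $\eta$'' in the theorem.

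Finally, to bound the angle I would bound the step length from above by $\|d\|=\|H^{-1}(r-g)\|\leq\gamma^{-1}(\|r\|+\|g\|)\leq\gamma^{-1}(1+\eta)\|g\|$. Combining with the descent estimate yields
\begin{equation*}
  \cos\theta_k = \frac{-g^\top d}{\|g\|\,\|d\|} \geq \frac{\left(\Lambda^{-1}-\gamma^{-1}\eta\right)\|g\|^2}{\|g\|\cdot\gamma^{-1}(1+\eta)\|g\|} = \frac{\gamma/\Lambda - \eta}{1+\eta} =: M > 0,
\end{equation*}
a bound independent of $k$, which establishes the claim. I expect the only genuine subtlety to be the uniform upper spectral bound $\Lambda$: it rests on the iterates staying inside the compact sub-level set and on the continuity afforded by \cref{lemma1}, and it is exactly this $\Lambda$ that dictates how small $\eta$ must be chosen. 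Everything else is a direct manipulation of the inexact Newton residual.
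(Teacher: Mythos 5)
Your proposal is correct and follows essentially the same route as the paper: uniform spectral bounds $\gamma \leq \bfx^\top\bfH_{J_{\rm GN}}(\bfb)\bfx \leq \Lambda$ on the compact sub-level set (the paper's $M_1, M_2$, obtained there by compactness of $S_0$ times the unit sphere and continuity), followed by the $\bfH^{-1}$-based manipulation of the inexact residual, and indeed your final bound $\frac{\gamma/\Lambda-\eta}{1+\eta}$ is algebraically identical to the paper's $\frac{M_1-\eta M_2}{(1+\eta)M_2}$. The only differences are cosmetic: you spell out the intermediate inequalities the paper compresses into ``combining \cref{eq:Hbound,eq:inexactGNsolve}'', and you make explicit the (implicit in the paper) facts that the iterates stay in $S_0$ and that $\varphi''\geq 0$ makes $\nabla^2 P$ positive semi-definite.
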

\begin{proof}
	First, we show that there exist universal constants $0<M_1\leq M_2<\infty$ such that
	\begin{equation}\label{eq:Hbound}
	   M_1 \leq \bfx^\top\bfH_{J_{\rm GN}}(\bfb)\bfx\leq M_2, \quad \text{ for all} \quad \bfx \in \CB_{\|\cdot\|} \quad \text{ and } \quad  \bfb\in S_0,
	\end{equation}
	where $S_0$ is defined in Lemma~\ref{lemma1} and
	\begin{equation*}
		\CB_{\|\cdot\|} = \left\lbrace\,\bfu \in \R^n\,:\,  \| \bfu\| = 1
		\right\rbrace.
	\end{equation*}
	Since $S_0$ and $ \CB_{\|\cdot\|}$ are compact and $ \bfb
	\mapsto \bfH_{J_{\rm GN}}(\bfb)$ is continuous on $S_0$, the mapping
	\begin{equation*}
		(\bfb,\bfx) \mapsto \bfx^\top \bfH_{J_{\rm GN}}(\bfb) \bfx
	\end{equation*}
	attains a global maximum, $M_2 < \infty$, and a global minimum,
	$M_1 \geq \gamma > 0$, on $S_0 \times \CB_{\|\cdot\|}$. The
	lower bound on $M_1$ follows from the fact that the first three
	terms of $\bfH_{J_{\rm GN}}$ are positive semi-definite and
	$\gamma>0$; see~\cref{eq:dJGN}.
	Combining \cref{eq:Hbound,eq:inexactGNsolve} we obtain
	\begin{equation*}
	  \cos\theta_k = -\frac{\nabla J(\bfb^k)^\top\bfd^k}{\left\|\nabla J(\bfb^k)\right\|\left\|\bfd^k\right\|} \geq \frac{M_1-\eta M_2}{(1+\eta)M_2} = M>0
	\end{equation*}
	for any $0<\eta<\frac{M_1}{M_2}$.
	Finally, as in \cite{NocedalWright2006}, the lower bound on $\cos\theta_k$ implies that $\bfd^k$
	is a descent direction.
\end{proof}
\begin{proof}[Proof of Theorem 1]
	The previous Lemmas verify the assumptions for Zoutendijk's
	result and show that the angle between the negative gradient and the search direction is bounded away from 90 degrees. The first one implies
	\begin{equation*}
		\sum_{k\geq 0} \cos^2 \theta_k \left\| \nabla J_{\rm GN}(\bfb^k)\right\|^2 < \infty.
	\end{equation*}
	Since $\cos\theta_k \geq M >0$ we see that $\nabla {J_{\rm GN}}(\bfb^k) \to 0$ as $k \to \infty$.
\end{proof}
As the problem size is quite large in 3D applications we use a
Preconditioned Conjugate Gradient (PCG)
method for approximately solving~\cref{eq:GNstep}; see~\cite{HestenesStiefel1952} for the original work on CG and, e.g., ~\cite[Ch.9]{Saad2003} for an introduction to preconditioning. The performance of
the method depends on the spectral properties of $\bfH_{J_{\rm
GN}}$ and in particular on the clustering of its eigenvalues. 
It is well known that penalty and barrier methods can lead to severe ill-conditioning of the associated Hessians; see, e.g.,~\cite[Ch. 17]{NocedalWright2006}.
In our application, the penalty function and its derivatives~\cref{eq:dphi} grow to infinity as the magnitude of the first partial derivative of the inhomogeneity approaches one by design. Thus, effective preconditioners are required when correcting highly distorted data.
\par
We show in this paper that effective preconditioning can be achieved by
exploiting the sparsity structure of $\bfH_{J_{\rm GN}}(\bfb^k)$. 
To be precise, we propose using the block-Jacobi preconditioner
\begin{equation}\label{eq:Pblock}
  \bfP_{\textrm{block}} (\bfb^k)=
  \bfH_D(\bfb^k) +\alpha\left(\bfD_1^\top\bfD_1+\bfM\right)+ \nabla^2 P(\bfb^k),
\end{equation}
where $\bfM$ is the matrix containing only the diagonal of the last two terms in the Hessian of the regularizer, i.e., 
$\bfD_2^\top\bfD_2+\bfD_3^\top\bfD_3$. In the following section, we
compare the performance of this preconditioner to the Jacobi-preconditioner $\bfP_{\textrm{Jac}}(\bfb^k)$ containing only
the diagonal of $\bfH_{J_{\rm GN}}(\bfb^k)$, and the symmetric Gauss-Seidel
preconditioner
\begin{equation}\label{eq:Psgs}
  \bfP_{\textrm{SGS}}(\bfb) =
  (\bfL^k+\bfP_{\textrm{Jac}}(\bfb^k))\bfP_{\textrm{Jac}}^{-1}(\bfb^k)(\bfU^k+\bfP_{\textrm{Jac}}(\bfb^k)),
\end{equation}
where $\bfL^k$ and $\bfU^k$ are the strictly lower and upper triangular part of
$\bfH_{J_{\rm GN}}(\bfb^k)$, respectively; see~\cite[Ch. 10]{Saad2003} for more details.
\par
Solving a linear system with matrix $\bfP_{\textrm{block}}(\bfb^k)$ can be broken down into 
solving $m^2$ linear systems of size $m+1$, which can be done in parallel. 
Furthermore, solving each $m+1 \times m+1$ system is of linear complexity due to its tridiagonal structure.
Therefore, the computational complexity of the preconditioner grows linearly with
respect to the number of $x_1$-faces, i.e.,  $\mathcal{O}\left(n\right)$. 
Thus, the asymptotic complexity of $\bfP_{\textrm{block}}(\bfb^k)$ is the same as the one for Jacobi and Gauss-Seidel preconditioners applied to the sparse Hessian.
However, the effective runtimes may vary considerably between the preconditioners, due to the degree of parallelism.
While the Jacobi preconditioner requires only component-wise division and is thus the cheapest and fastest to compute, 
the Gauss-Seidel preconditioner is not straightforward to parallelize
and therefore, in our experiments, the most costly per iteration;
see~\cref{sec:experiments}.  
\par
We use the criteria suggested in~\cite{Modersitzki2009} for stopping the Gauss-Newton method. 
The iteration is stopped when prescribed tolerances for the relative
changes in the objective function value, the
iterate, and the norm of the gradient are reached, i.e., if all of the following three conditions hold
\begin{align}
 \left\lvert J_{\rm GN}(\bfb^{k+1})-J_{\rm GN}(\bfb^k)\right\rvert &\leq \epsilon_{\rm obj} \left(1+|J_{\rm GN}(\bfzero)|\right) \label{eq:stopping1}\\
 \left\|\bfb^{k+1}-\bfb^k\right\| &\leq \epsilon_{\rm iter} \left(1+\left\|\bfb^k\right\|\right) \\
 \left\| \nabla J_{\rm GN}(\bfb^{k+1})\right\| &\leq \epsilon_{\rm grad} \left(1+|J_{\rm GN}(\bfzero)|\right),
 \end{align}
or a maximum number of iterations is
exceeded. In our later experiments the maximum number of iterations is set to 10, which is the default setting in~\cite{RuthottoEtAl2013hysco}.
As common in inverse problems, we solve~\eqref{eq:JGN} only to a relatively low accuracy. In our numerical experiments we choose $\epsilon_{\rm obj}=10^{-3}$, $\epsilon_{\rm iter} = \epsilon_{\rm grad} = 10^{-2}$.

\subsection{ADMM} 
\label{sub:admm}
The second approach we consider is based on the Alternating Direction
Method of Multipliers~(ADMM).
Originally developed in the mid 1970s, ADMM recently received a lot of attention in many data science and imaging applications; see, e.g.,~\cite{GabayMercier1976} and the recent surveys~\cite{BoydEtAl2011,Eckstein2012,Glowinski2014}.
The key idea in our case is to split the terms of the objective function~\cref{eq:discprob} that are separable with respect to columns of the images from those terms that couple across image columns. 
To this end, let $\bfz$ be a new artificial variable and split the objective function $J$ into
\begin{equation}\label{eq:fg}
f(\bfb) = D(\bfb) + \frac{\alpha h^3}{2}\left\|\bfD_1\bfb\right\|^2\quad\text{and}\quad g(\bfz) = \frac{\alpha h^3}{2}\left(\left\|\bfD_2\bfz\right\|^2+\left\|\bfD_3\bfz\right\|^2\right).
\end{equation}
Then we consider
\begin{equation}\label{eq:admmprob} 
	  \min_{\bfb,\bfz \in \R^n}  \iota_C(\bfb) + f(\bfb) + g(\bfz)
	  \quad \textrm{subject to} \quad\bfb=\bfz,
\end{equation}
where we encode the linear inequality constraints into the objective function, using the indicator function $\iota_C(\bfb)$ of the compact and convex set $C = \lbrace\,\bfb\in\Sigma\,:\,-1\leq\bfD_1\bfb\leq 1\,\rbrace$,  taking the value $0$, whenever $\bfb\in C$, and
$\infty$ otherwise. Thus,~\cref{eq:admmprob} is equivalent to~\cref{eq:discprob}.
For an augmentation parameter $\rho>0$, whose choice is discussed below, we aim at finding a stationary point of the augmented Lagrangian of~\cref{eq:admmprob} 
\begin{equation}\label{eq:Lp}
  \mathcal{L}_\rho(\bfb,\bfz,\bfy) = \iota_C(\bfb)+f(\bfb)+g(\bfz) +
  \bfy^\top(\bfb-\bfz)+\frac{\rho h^3}{2}\left\|\bfb-\bfz\right\|^2,
\end{equation}
where $\bfy \in \R^n$ is the Lagrange multiplier associated with the equality
constraint in~\cref{eq:admmprob}. 
 The idea of ADMM is to update $\bfb$, $\bfz$, and $\bfy$ in an alternating fashion
\begin{align}
  \bfb^{k+1} &= \mathop{\argmin}\limits_{\bfb\in\R^n} \mathcal{L}_\rho(\bfb,\bfz^k,\bfy^k), \label{eq:UadmmStep1}\\
  \bfz^{k+1} &= \mathop{\argmin}\limits_{\bfz\in\R^n} \mathcal{L}_\rho(\bfb^{k+1},\bfz,\bfy^k), \label{eq:UadmmStep2}\\
  \bfy^{k+1} &= \bfy^k+\rho h^3(\bfb^{k+1}-\bfz^{k+1}). \label{eq:UadmmStep3}
\end{align}
Introducing the scaled dual variable $\bfu = \bfy/(\rho h^3)$, we obtain the following iteration
\begin{align}
  \bfb^{k+1} &= \mathop{\argmin}\limits_{\bfb\in\mathbb{R}^n}
  \left\{ \iota_C(\bfb)+f(\bfb) +\frac{\rho h^3}{2}\left\|\bfb-\bfz^k+\bfu^k\right\|^2\right\},  \label{eq:admmStep1}\\
  \bfz^{k+1} &= \mathop{\argmin}\limits_{\bfz\in\mathbb{R}^n}
  \left\{  g(\bfz)+\frac{\rho
  h^3}{2}\left\|\bfb^{k+1}-\bfz+\bfu^k\right\|^2\right\}, \label{eq:admmStep2}\\
  \bfu^{k+1} &= \bfu^k + \bfb^{k+1}-\bfz^{k+1}. \label{eq:admmStep3}
\end{align}
The first subproblem, updating $\bfb$, is a non-convex constrained
optimization problem and approximately solved using sequential quadratic programming (SQP) as described below. For the discretization derived in the previous section, this problem is separable with respect to the columns in the image and thus can be further broken down into $m^2$ separate steps.
In each iteration of the SQP method, we form a quadratic approximation of the objective function and solve the resulting quadratic program (QP) using the active set method with Schur complement solver described in~\cite[Ch. 16]{NocedalWright2006}. The general form of the QP is
\begin{equation}\label{eq:qp}
  \min\limits_\bfx
  \frac{1}{2}\bfx^\top\bfG\bfx+\bfc^\top\bfx \quad\textrm{subject to}\quad
  \bfA\bfx\geq \bfd,
\end{equation}
where, in our case, $\bfG$ is the respective Hessian approximation, $\bfc$ is the gradient,
$\bfD_1$ and $-\bfD_1$ are stacked into $\bfA$, and $\bfd$ is a vector
of all negative ones.
Let $\bfx^k$  be the current iterate in this QP and $I$ be a subset of
component-indices describing the active set. Further, let $\bfA_I$ be 
the matrix containing the rows of $\bfA$ associated with active
constraints and let $\bfd_I$ be the corresponding right-hand-side. Then we obtain an update direction $\bfp$ for $\bfx^k$ and the Lagrange multiplier $\boldsymbol{\lambda}$ of the active constraints by solving
\begin{equation}\label{eq:schurasqp}
\begin{pmatrix} \bfG & \bfA_I^\top\\\bfA_I &
\mathbf{0}\end{pmatrix}\begin{pmatrix}\bfp \\
-\boldsymbol{\lambda}\end{pmatrix} =
\begin{pmatrix}-\bfc-\bfG\bfx^k\\\bfd_I-\bfA_I\bfx^k\end{pmatrix} =:
\begin{pmatrix} \bfg \\ \bfh\end{pmatrix}
\end{equation}
using the Schur complement, i.e., by setting
\begin{equation*}
  \boldsymbol{\lambda} =
  -\left(\bfA_I\bfG^{-1}\bfA_I^\top\right)^{-1}\left(\bfA_I\bfG^{-1}\bfg-\bfh\right)\quad \text{ and } \quad
  \bfp= \bfG^{-1}\left(\bfA_I^\top\boldsymbol{\lambda}+\bfg\right).
\end{equation*}
Now three cases can occur: First, if $\bfp = \mathbf{0}$ and
$\boldsymbol{\lambda}\geq \mathbf{0}$ component-wise, then $\bfx^k$
is the global solution to~\cref{eq:qp}. Second, if $\bfp =\mathbf{0}$ but some components of
$\boldsymbol{\lambda}$ are negative, we remove up to one constraint with negative
Lagrange multiplier per block in $\bfA$ from the active set and repeat the above
computations. Third, if $\bfp\neq\mathbf{0}$, compute a step
length $\lambda^k$ such that $\bfx^k+\lambda^k\bfp$ is feasible but at least one
additional constraint has become active. Then all of these constraints
are added to the active set.
Linear independence of the active constraints is guaranteed since the partial derivative at an active voxel can be either $1$ or $-1$.
Note that the Schur complement involves $\bfG^{-1}$ and thus an
efficient method to solve linear systems with matrix $\bfG$ is needed.
In our application, the chosen discretization allows for fast inversions. 
\par
More precisely, for the update of $\bfb$, we
have 
\begin{equation*}
\bfc = \nabla D(\bfb^k) + \alpha h^3 \bfD_1\bfb^k + \rho h^3 (\bfb^k-\bfz^k+\bfu^k)
\end{equation*}
and
\begin{equation*}
	\bfG = \bfH_D\left(\bfb^k\right) + \alpha h^3 \bfD_1^\top\bfD_1+\rho h^3
	\bfI\left(n\right),
\end{equation*}
which has the
before mentioned block-diagonal structure with tridiagonal blocks of size $m +1$.
Therefore, linear systems involving $\bfG$ can be solved in parallel by a direct method with a linear complexity
of $\mathcal{O}\left(n\right)$.
\par
The second subproblem, updating $\bfz$, is an unconstrained strictly convex
quadratic optimization problem with a structured, symmetric positive
definite Hessian
\begin{equation*}
  \widetilde{\bfG}=\alpha h^3 \left(\bfD_2^\top\bfD_2+\bfD_3^\top\bfD_3\right) + \rho h^3
	\bfI\left(n\right)
\end{equation*}
and has the closed form solution
\begin{equation*}
  \bfz^{k+1} = \widetilde{\bfG}^{-1}\left( \rho h^3(\bfb^{k+1}+\bfu^k)\right).
\end{equation*}
The matrix $\widetilde{\bfG}$ can be reordered into a block-diagonal matrix with $m+1$ blocks, whose blocks are matrices of size $m^2 \times m^2$. 
Each block is a discretization of the negative two-dimensional Laplacian with homogeneous Neumann boundary conditions on a regular mesh and thus is a Block-Toeplitz-Toeplitz-Block~(BTTB) matrix. 
Hence, $\widetilde{\bfG}$ can be diagonalized using two-dimensional Discrete Cosine Transforms~(DCT); see~\cite[Ch. 4]{HansenNagyOLeary2006}. 
More specifically, denote by $\bfC(m)\in\R^{m\times m}$ the
one-dimensional DCT of size $m$.
Then we have
\begin{equation}\label{dctdiag}
  \til{\bfD}(m,h)^\top\til{\bfD}(m,h) = \bfC(m)^\top\boldsymbol{\Lambda}(m,h)\bfC(m)
\end{equation}
for some diagonal matrix $\boldsymbol{\Lambda}(m,h)$. Combining~\cref{eq:d2d3} and~\cref{dctdiag}, we immediately obtain
\begin{equation}
  \bfD_2^\top\bfD_2+\bfD_3^\top\bfD_3 =
  \bfC^\top\boldsymbol{\Lambda}\bfC, \text{ where } \bfC = \bfC(m)\kron\bfC(m)\kron\bfI(m+1)
\end{equation}
is the two-dimensional DCT along the $e_2$- and $e_3$-direction, and 
\begin{equation*}
  \boldsymbol{\Lambda} = \bfI(m)\kron  \boldsymbol{\Lambda}(m,h)\kron\bfI(m+1)
  + \boldsymbol{\Lambda}(m,h)\kron\bfI(m)\kron\bfI(m+1)
\end{equation*}
is a diagonal matrix that only needs to be computed once for every fixed
discretization $(m,h)$. Finally, since $\bfC$ is orthogonal, we
obtain 
\begin{equation*}
  \widetilde{\bfG} = \bfC^\top\left(\alpha h^3\boldsymbol\Lambda+\rho
h^3\bfI\left(n\right)\right)\bfC.
\end{equation*}
Thus, the second ADMM step, updating $\bfz$, requires $m+1$ two-dimensional DCTs, which is of complexity
$\mathcal{O}\left( (m+1) (m^2 \log m^2)\right)$, followed by a
diagonal solve,
and finally $m+1$ inverse two-dimensional DCTs. Again this is a direct solve and does not
require the use of iterative methods.

\bigskip

For the ADMM algorithm, we use the stopping criteria proposed
in~\cite[Sec. 3]{BoydEtAl2011}, which are also justified by our convergence analysis in~\cref{sub:admmconv}. We stop when the norm of the primal and dual residual satisfy
\begin{equation}\label{eq:admmStop}
	\left\|\bfb^{k+1}-\bfz^{k+1}\right\| \leq \epsilon_{\rm pri} \quad\text{
	and }\quad \rho h^3 \left\|\bfz^{k}-\bfz^{k+1}\right\| \leq \epsilon_{\rm
	dual},
\end{equation}
where $\epsilon_{\rm pri}$ and $\epsilon_{\rm dual}$ are computed
exactly as~(3.12) in~\cite{BoydEtAl2011} using a combination of an absolute
and a relative tolerance
\begin{align*}
  \epsilon_{\rm pri} &= \sqrt{n}\ \epsilon_{\rm
  abs}+\epsilon_{\rm
  rel}\max\left\lbrace\left\|\bfb^k\right\|,\left\|\bfz^k\right\|\right\rbrace\\
  \epsilon_{\rm dual} &= \sqrt{n}\ \epsilon_{\rm
  abs}+\epsilon_{\rm rel}\rho h^3\left\|\bfu^k\right\|.
\end{align*}
In our numerical experiments, we choose $\epsilon_{\rm abs}= \epsilon_{\rm rel} = 2\cdot 10^{-1}$.
\par

\subsection{Convergence of ADMM}
\label{sub:admmconv}
It is important to stress that the first subproblem in our ADMM algorithm,~\cref{eq:admmStep1}, is non-convex
and thus the traditional convergence results for ADMM do not hold.
However, ADMM can be considered a local optimization method and has been successfully applied to non-convex problems in other applications; see~\cite[Ch.9]{BoydEtAl2011} for some examples. 
Recently, convergence results have been established under some modest
conditions on the functions involved; see, for
example,~\cite{WangEtAl2015}. In the following, we show convergence of ADMM 
for the specific problem at hand. Using the smoothness of our problem we obtain a simplified, but less general, convergence proof as compared to~\cite{WangEtAl2015}. 
We first note that the functions $f$ and $g$ in~\cref{eq:fg} are twice continuously differentiable. Further, $\nabla f$ is Lipschitz continuous over $C$ and $\nabla g$ is Lipschitz continuous over $\R^n$. We denote the corresponding Lipschitz constants by $L_f$ and $L_g$ respectively. The relation between these Lipschitz constants and the augmentation parameter, $\rho$, is crucial in the convergence analysis. Throughout this section we use both the unscaled ADMM formulation~\cref{eq:UadmmStep1,eq:UadmmStep2,eq:UadmmStep3} as well as the equivalent scaled formulation \cref{eq:admmStep1,eq:admmStep2,eq:admmStep3}, whichever is more convenient. Recall that $\bfu^k = \bfy^k/(\rho h^3)$ is the scaled Lagrange multiplier.
The main result of this section is the following.
\begin{theorem}\label{thm:ourconvadmm}
For each $\rho > \frac{2}{h^3} \max\lbrace L_f, L_g\rbrace$, the sequence of iterates $\{\bfb^k,\bfz^k,\bfy^k\}$ generated by the ADMM algorithm~\cref{eq:UadmmStep1,eq:UadmmStep2,eq:UadmmStep3} converges subsequentially and each limit point $\{\bfb^*,\bfz^*,\bfy^*\}$ is a stationary point of the Lagrangian $\CL_\rho$ in~\cref{eq:Lp}.
\end{theorem}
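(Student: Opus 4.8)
The plan is to use the augmented Lagrangian $\CL_\rho$ as a Lyapunov function: I would show it decreases monotonically along the ADMM iterates, is bounded below, and that its decrease controls the successive differences of all three variables, so that compactness yields a convergent subsequence whose limit is stationary. The engine of the whole argument is the optimality condition of the $\bfz$-subproblem~\cref{eq:UadmmStep2}: since $g$ is smooth and this update is unconstrained, first-order optimality combined with the dual update~\cref{eq:UadmmStep3} gives the identity $\bfy^{k+1} = \nabla g(\bfz^{k+1})$. This is exactly what lets me tame the otherwise uncontrolled dual increment, since then $\|\bfy^{k+1}-\bfy^k\| = \|\nabla g(\bfz^{k+1})-\nabla g(\bfz^k)\| \leq L_g\|\bfz^{k+1}-\bfz^k\|$.

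First I would establish the sufficient-decrease estimate
\[
  \CL_\rho(\bfb^{k+1},\bfz^{k+1},\bfy^{k+1}) \leq \CL_\rho(\bfb^k,\bfz^k,\bfy^k) - c_1\|\bfb^{k+1}-\bfb^k\|^2 - c_2\|\bfz^{k+1}-\bfz^k\|^2
\]
with $c_1,c_2>0$, by splitting the update into its three stages. The $\bfb$-step decreases $\CL_\rho$ by at least $\frac{\rho h^3 - L_f}{2}\|\bfb^{k+1}-\bfb^k\|^2$: although $f$ is non-convex, the proximal term $\frac{\rho h^3}{2}\|\cdot-\bfz^k\|^2$ renders the $\bfb$-subproblem objective strongly convex with modulus $\rho h^3 - L_f > 0$ over the convex set $C$, and $\bfb^{k+1}$ is its minimizer. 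Since $g$ is a convex quadratic, the $\bfz$-step decreases $\CL_\rho$ by at least $\frac{\rho h^3}{2}\|\bfz^{k+1}-\bfz^k\|^2$. The $\bfy$-step \emph{increases} $\CL_\rho$ by exactly $\frac{1}{\rho h^3}\|\bfy^{k+1}-\bfy^k\|^2 \leq \frac{L_g^2}{\rho h^3}\|\bfz^{k+1}-\bfz^k\|^2$. Collecting the three contributions gives $c_1 = \frac{\rho h^3 - L_f}{2}$ and $c_2 = \frac{\rho h^3}{2} - \frac{L_g^2}{\rho h^3}$, both of which are positive under the stated bound $\rho h^3 > 2\max\{L_f,L_g\}$; this is precisely where the hypothesis on $\rho$ enters.

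Next I would show $\CL_\rho$ is bounded below along the iterates. Substituting $\bfy^k = \nabla g(\bfz^k)$ and applying the descent lemma for the $L_g$-smooth $g$ to the pair $(\bfb^k,\bfz^k)$ yields $\CL_\rho(\bfb^k,\bfz^k,\bfy^k) \geq f(\bfb^k) + g(\bfb^k) + \frac{\rho h^3 - L_g}{2}\|\bfb^k-\bfz^k\|^2 \geq J(\bfb^k) \geq 0$, using $\iota_C(\bfb^k)=0$ and the nonnegativity of $J = D + \alpha S$. A monotone, lower-bounded sequence converges, so telescoping the decrease estimate forces $\sum_k (\|\bfb^{k+1}-\bfb^k\|^2 + \|\bfz^{k+1}-\bfz^k\|^2) < \infty$; in particular $\|\bfb^{k+1}-\bfb^k\|\to 0$ and $\|\bfz^{k+1}-\bfz^k\|\to 0$, whence $\|\bfy^{k+1}-\bfy^k\|\to 0$ and the primal residual $\|\bfb^{k+1}-\bfz^{k+1}\| = (\rho h^3)^{-1}\|\bfy^{k+1}-\bfy^k\| \to 0$. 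Since $\bfb^k$ lies in the compact set $C$, $\{\bfb^k\}$ is bounded, hence so is $\{\bfz^k\}$ (as $\|\bfb^k-\bfz^k\|\to 0$) and $\{\bfy^k\}=\{\nabla g(\bfz^k)\}$ by continuity; Bolzano--Weierstrass then provides a convergent subsequence with limit $\{\bfb^*,\bfz^*,\bfy^*\}$.

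Finally, I would pass to the limit in the first-order optimality conditions of the two subproblems to identify the limit as a stationary point of $\CL_\rho$. The $\bfz$-optimality gives $\nabla g(\bfz^*)=\bfy^*$, and the feasibility $\bfb^*=\bfz^*$ follows from the vanishing primal residual. For the $\bfb$-step, rewriting its optimality condition with the dual update gives $-\nabla f(\bfb^{k+1}) - \bfy^{k+1} - \rho h^3(\bfz^{k+1}-\bfz^k) \in N_C(\bfb^{k+1})$, where $N_C = \partial\iota_C$ is the normal cone of $C$. The main obstacle I anticipate is this limit passage: because $N_C$ is set-valued I cannot simply take limits termwise, but I can invoke the closedness (outer semicontinuity) of the graph of the normal-cone map, together with $\bfb^{k+1}\to\bfb^*$, $\bfz^{k+1}-\bfz^k\to 0$, continuity of $\nabla f$, and $\bfy^{k+1}\to\bfy^*$ along the subsequence, to conclude $-\nabla f(\bfb^*)-\bfy^* \in N_C(\bfb^*)$. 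Together with $\nabla g(\bfz^*)=\bfy^*$ and $\bfb^*=\bfz^*$, these are exactly the stationarity conditions for $\CL_\rho$, which completes the proof.
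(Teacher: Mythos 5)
Your proposal is correct, and it follows the same overall architecture as the paper's proof: treat $\CL_\rho$ as a Lyapunov function, prove sufficient decrease plus a lower bound along the iterates via the key identity $\bfy^{k+1}=\nabla g(\bfz^{k+1})$ and the resulting dual-increment bound $\|\bfy^{k+1}-\bfy^k\|\leq L_g\|\bfz^{k+1}-\bfz^k\|$, then use compactness of $C$ to extract a convergent subsequence and pass to stationarity. The one place where you genuinely diverge from the paper is the dual step, and there your version is the correct one. The paper's \cref{lem:ydescent} claims the multiplier update does not increase $\CL_\rho$, computing $(\bfy^k-\bfy^{k+1})^\top(\bfb^{k+1}-\bfz^{k+1})=\rho h^3\|\bfb^{k+1}-\bfz^{k+1}\|^2\geq 0$; but since \cref{eq:UadmmStep3} gives $\bfy^{k+1}-\bfy^k=\rho h^3(\bfb^{k+1}-\bfz^{k+1})$, that inner product equals $-\rho h^3\|\bfb^{k+1}-\bfz^{k+1}\|^2\leq 0$: the dual ascent step always (weakly) \emph{increases} the augmented Lagrangian, as it must, being a gradient ascent step in a variable in which $\CL_\rho$ is linear. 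Your accounting --- the $\bfy$-step increases $\CL_\rho$ by exactly $\frac{1}{\rho h^3}\|\bfy^{k+1}-\bfy^k\|^2\leq\frac{L_g^2}{\rho h^3}\|\bfz^{k+1}-\bfz^k\|^2$, which is then absorbed into the $\frac{\rho h^3}{2}\|\bfz^{k+1}-\bfz^k\|^2$ decrease of the $\bfz$-step --- is precisely how this is repaired in the literature the paper cites, and it still yields positive constants under the stated hypothesis, since $\rho h^3>2L_g$ implies $\frac{\rho h^3}{2}-\frac{L_g^2}{\rho h^3}>\frac{\rho h^3}{4}>0$; so the theorem stands, with $C_1=\min\bigl\{\frac{\rho h^3-L_f}{2},\,\frac{\rho h^3}{2}-\frac{L_g^2}{\rho h^3}\bigr\}$ in place of the paper's $\frac{\rho h^3}{2}-L_f$. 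The remaining differences are minor: your $\bfb$-step estimate via strong convexity of the proximal subproblem gives the slightly sharper constant $\frac{\rho h^3-L_f}{2}$ versus the paper's $\frac{\rho h^3}{2}-L_f$ (both positive here), and at the end you pass to the limit in the subproblem optimality conditions using outer semicontinuity of the normal cone map, whereas the paper first constructs explicit subgradients $\bfd^{k+1}\in\partial\CL_\rho(\bfb^{k+1},\bfz^{k+1},\bfy^{k+1})$ vanishing with the successive differences (its property (P3)) --- two formulations of the same closedness argument. One point worth making explicit in a final write-up (it affects the paper equally): the identity $\bfy^k=\nabla g(\bfz^k)$ only holds for $k\geq 1$, so the decrease estimate and the lower bound on $\CL_\rho$ should be invoked from the second iteration onward, or the initialization chosen so that $\bfy^0=\nabla g(\bfz^0)$.
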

As in~\cite{WangEtAl2015}, our proof is based on the following three properties.
\begin{theorem}\label{theo:principles}
	For each $\rho > \frac{2}{h^3} \max\{L_f, L_g\} $, the sequence of iterates $\lbrace \bfb^k,\bfz^k,\bfy^k \rbrace$ generated by the ADMM algorithm~\cref{eq:UadmmStep1,eq:UadmmStep2,eq:UadmmStep3} has the following properties:
	\begin{description}
		\item[(P1)] The iterates $\lbrace\bfb^k,\bfz^k,\bfy^k\rbrace$ are well-defined and bounded, and $\{\CL_\rho(\bfb^k,\bfz^k,\bfy^k)\}_k$ is bounded below.
		\item[(P2)] The value of the Lagrangian decreases sufficiently fast, meaning there exists a constant $C_1>0$ such that
		\[	\CL_\rho(\bfb^k,\bfz^k,\bfy^k)-\CL_\rho(\bfb^{k+1},\bfz^{k+1},\bfy^{k+1})\geq C_1 \left(\left\|\bfb^k-\bfb^{k+1}\right\|^2+\left\|\bfz^k-\bfz^{k+1}\right\|^2\right).\]
		\item[(P3)] There exists a constant $C_2>0$ and subgradients $\bfd^{k+1}\in\partial\CL_\rho(\bfb^{k+1},\bfz^{k+1},\bfy^{k+1})$ such that
		\[\left\|\bfd^{k+1}\right\| \leq C_2\left( \left\|\bfb^k-\bfb^{k+1}\right\|+\left\|\bfz^k-\bfz^{k+1}\right\| \right). \]
	\end{description}
\end{theorem}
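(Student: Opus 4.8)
The plan is to follow the three-property template of \cite{WangEtAl2015}, but to exploit the special structure of our splitting: the coupling matrix is the identity and the second block $g$ is a convex quadratic with globally Lipschitz gradient. The single observation that drives everything is that the first-order optimality condition of the $\bfz$-update \cref{eq:UadmmStep2} reads $0 = \nabla g(\bfz^{k+1}) - \bfy^k - \rho h^3(\bfb^{k+1}-\bfz^{k+1})$, which together with the dual update \cref{eq:UadmmStep3} yields the identity $\bfy^{k+1} = \nabla g(\bfz^{k+1})$ for every $k\ge 0$. Consequently the dual increments are controlled by the $\bfz$-increments,
\[ \|\bfy^{k+1}-\bfy^k\| = \|\nabla g(\bfz^{k+1}) - \nabla g(\bfz^k)\| \le L_g \|\bfz^{k+1}-\bfz^k\|, \]
which is the mechanism by which the otherwise troublesome multiplier terms are absorbed. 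Well-definedness of the iterates is immediate: for $\rho h^3 > L_f$ the $\bfb$-subproblem objective $\iota_C + f + \frac{\rho h^3}{2}\|\,\cdot - \bfz^k + \bfu^k\|^2$ is the sum of the indicator of the compact convex set $C$ and a strongly convex smooth function (since $\nabla f$ is $L_f$-Lipschitz, so $\nabla^2 f \succeq -L_f\bfI$), hence admits a unique minimizer; the $\bfz$-subproblem is the unconstrained strictly convex quadratic with the closed-form solution already given in \cref{sub:admm}.

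For \textbf{(P1)} I would first establish that $\CL_\rho$ is bounded below along the iterates. Using the descent lemma for the $L_g$-smooth function $g$ and the identity $\bfy^k = \nabla g(\bfz^k)$,
\[ g(\bfz^k) + \bfy^{k\top}(\bfb^k - \bfz^k) \ge g(\bfb^k) - \frac{L_g}{2}\|\bfb^k-\bfz^k\|^2, \]
so that $\CL_\rho(\bfb^k,\bfz^k,\bfy^k) \ge f(\bfb^k) + \iota_C(\bfb^k) + g(\bfb^k) + \frac{\rho h^3 - L_g}{2}\|\bfb^k-\bfz^k\|^2 \ge 0$, where the last step uses $\rho h^3 > L_g$ together with $f,g\ge 0$ and $\iota_C\ge 0$. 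Boundedness of the iterates then follows: $\bfb^k$ lies in the compact set $C$; the displayed lower bound together with the monotone decrease from (P2) bounds $\|\bfb^k-\bfz^k\|$ and hence $\bfz^k$; and $\bfy^k=\nabla g(\bfz^k)$ is bounded because $\nabla g$ is continuous on a bounded set.

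The core of the argument is \textbf{(P2)}, the sufficient decrease, which I would obtain by splitting the one-step change of $\CL_\rho$ into the contributions of the three updates. Strong convexity of the $\bfb$-subproblem with modulus $\rho h^3 - L_f$ gives $\CL_\rho(\bfb^k,\bfz^k,\bfy^k) - \CL_\rho(\bfb^{k+1},\bfz^k,\bfy^k) \ge \frac{\rho h^3 - L_f}{2}\|\bfb^k-\bfb^{k+1}\|^2$, and strong convexity of the $\bfz$-subproblem with modulus $\rho h^3$ gives the analogous lower bound $\frac{\rho h^3}{2}\|\bfz^k-\bfz^{k+1}\|^2$. The dual step contributes only the negative term $-\frac{1}{\rho h^3}\|\bfy^{k+1}-\bfy^k\|^2$, which by the Lipschitz estimate above is at least $-\frac{L_g^2}{\rho h^3}\|\bfz^{k+1}-\bfz^k\|^2$. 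Adding the three contributions yields
\[ \CL_\rho(\bfb^k,\bfz^k,\bfy^k) - \CL_\rho(\bfb^{k+1},\bfz^{k+1},\bfy^{k+1}) \ge \frac{\rho h^3 - L_f}{2}\|\bfb^k-\bfb^{k+1}\|^2 + \Big(\frac{\rho h^3}{2} - \frac{L_g^2}{\rho h^3}\Big)\|\bfz^k-\bfz^{k+1}\|^2, \]
and the hypothesis $\rho > \frac{2}{h^3}\max\{L_f,L_g\}$ is exactly what forces both coefficients to be strictly positive, so $C_1 = \frac{1}{2}\min\{\rho h^3 - L_f,\ \rho h^3 - 2L_g^2/(\rho h^3)\} > 0$ works.

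Finally, for \textbf{(P3)} I would assemble a subgradient of $\CL_\rho$ at $(\bfb^{k+1},\bfz^{k+1},\bfy^{k+1})$ block by block using the optimality conditions. Writing the $\bfb$-optimality as $-\xi^{k+1} = \nabla f(\bfb^{k+1}) + \bfy^k + \rho h^3(\bfb^{k+1}-\bfz^k)$ with $\xi^{k+1}\in\partial\iota_C(\bfb^{k+1})$, the $\bfb$-block simplifies to $(\bfy^{k+1}-\bfy^k) + \rho h^3(\bfz^k-\bfz^{k+1})$; the $\bfz$-block collapses to $-(\bfy^{k+1}-\bfy^k)$ via the identity $\bfy^{k+1}=\nabla g(\bfz^{k+1})$; and the $\bfy$-block is $\bfb^{k+1}-\bfz^{k+1} = \frac{1}{\rho h^3}(\bfy^{k+1}-\bfy^k)$. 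Each piece is controlled by $\|\bfy^{k+1}-\bfy^k\| \le L_g\|\bfz^{k+1}-\bfz^k\|$ and by $\|\bfz^k-\bfz^{k+1}\|$, giving the claimed bound with, e.g., $C_2 = \rho h^3 + L_g\big(2 + 1/(\rho h^3)\big)$. The main obstacle I anticipate is the bookkeeping around the nonsmooth indicator $\iota_C$: one must justify that the $\bfb$-subproblem obeys the stated convex subdifferential calculus, that the strong-convexity decrease in (P2) remains valid despite the constraint (it does, since $\bfb^k\in C$ is feasible and $\bfb^{k+1}$ is the constrained minimizer), and that the selected element genuinely lies in $\partial\CL_\rho$. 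Everything else is a careful but routine accounting of the per-step identities above.
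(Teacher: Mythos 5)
Your proposal is correct, and its overall architecture matches the paper's: the same key identity $\bfy^{k+1}=\nabla g(\bfz^{k+1})$ extracted from the $\bfz$-optimality condition, the resulting control $\|\bfy^{k+1}-\bfy^k\|\le L_g\|\bfz^{k+1}-\bfz^k\|$ of the multiplier increments, the same descent-lemma argument for the lower bound in (P1), and the same block-by-block assembly of a subgradient in (P3). Two differences are worth recording. First, you derive the per-step decrease of the $\bfb$- and $\bfz$-updates from strong convexity of the subproblem objectives (moduli $\rho h^3-L_f$ and $\rho h^3$), while the paper expands $\CL_\rho$ and combines the variational-inequality optimality condition with Lipschitz continuity of $\nabla f$; both are sound, and yours yields slightly sharper constants. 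Second, and more substantively, you handle the dual step correctly: it \emph{increases} the augmented Lagrangian by $\frac{1}{\rho h^3}\|\bfy^{k+1}-\bfy^k\|^2$, which you then absorb into the $\bfz$-decrease via the Lipschitz bound, arriving at the coefficient $\frac{\rho h^3}{2}-\frac{L_g^2}{\rho h^3}$, which is positive precisely because $\rho h^3>2L_g$. The paper's \cref{lem:ydescent} instead claims the dual update does not increase $\CL_\rho$; this rests on a sign error, since with $\bfy^{k+1}=\bfy^k+\rho h^3(\bfb^{k+1}-\bfz^{k+1})$ one has $(\bfy^k-\bfy^{k+1})^\top(\bfb^{k+1}-\bfz^{k+1})=-\rho h^3\|\bfb^{k+1}-\bfz^{k+1}\|^2\le 0$. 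Your treatment (the one in \cite{WangEtAl2015}) is what actually makes the hypothesis $\rho>\frac{2}{h^3}\max\{L_f,L_g\}$ earn its keep in (P2), and the paper's constant $C_1=\frac{\rho h^3}{2}-L_f$ should correspondingly be replaced by something like your $C_1=\frac{1}{2}\min\{\rho h^3-L_f,\ \rho h^3-2L_g^2/(\rho h^3)\}$. One caveat you share with the paper: the identity $\bfy^k=\nabla g(\bfz^k)$, and hence the dual-increment bound, holds only for $k\ge 1$ unless $\bfy^0$ is initialized consistently, so the $k=0$ step should be treated separately (harmless, but it deserves a sentence).
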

We split the proof of \cref{theo:principles} into several lemmas.
\begin{lemma}\label{lemm:bzupdate}
	For $\rho > 0$ the subproblems~\cref{eq:admmStep1} and~\cref{eq:admmStep2} have at least one solution for all $k \in \N$.
\end{lemma}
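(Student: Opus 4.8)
The plan is to establish existence of minimizers for each subproblem separately by invoking the standard direct method of the calculus of variations, namely showing that each objective function is lower semicontinuous and coercive (or that minimization occurs over a compact set), so that the Weierstrass extreme value theorem guarantees attainment of the infimum.

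For the $\bfb$-update~\cref{eq:admmStep1}, the objective is $\iota_C(\bfb)+f(\bfb)+\frac{\rho h^3}{2}\|\bfb-\bfz^k+\bfu^k\|^2$. First I would observe that the indicator $\iota_C$ restricts the effective domain to the set $C$, which is compact and convex by construction (it is the intersection of the closed box $\Sigma$ with the preimage of $[-1,1]^{m^2}$ under the continuous map $\bfD_1$). On this set the remaining terms $f$ and the quadratic proximal term are continuous, since $D$ is continuous (the images $\CI_{\pm v}$ are continuously differentiable and compactly supported) and the quadratic pieces are smooth. Hence the full objective, as a sum of a lower semicontinuous function with compact effective domain and continuous functions, is lower semicontinuous with compact effective domain, so it attains its minimum on $C$. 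This settles the first subproblem.

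For the $\bfz$-update~\cref{eq:admmStep2}, the objective is $g(\bfz)+\frac{\rho h^3}{2}\|\bfb^{k+1}-\bfz+\bfu^k\|^2$, which is now an unconstrained problem over all of $\R^n$. Here I would instead argue by strict convexity and coercivity: the Hessian of this objective is $\widetilde{\bfG}=\alpha h^3(\bfD_2^\top\bfD_2+\bfD_3^\top\bfD_3)+\rho h^3\bfI(n)$, which is symmetric positive definite for $\rho>0$ because of the regularizing $\rho h^3\bfI(n)$ term. Thus the objective is a strictly convex quadratic, hence coercive, so it admits a unique minimizer given in closed form by $\bfz^{k+1}=\widetilde{\bfG}^{-1}(\rho h^3(\bfb^{k+1}+\bfu^k))$, as already displayed in the text.

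I do not expect a serious obstacle in this lemma, since both subproblems have favorable structure: the first is minimization of a continuous function over a compact set, and the second is an explicitly solvable strictly convex quadratic. The only point requiring minor care is verifying that $C$ is genuinely compact and nonempty (for instance $\bfb=\bfzero\in C$), and that $f$ is finite and continuous on $C$ so that the sum is not identically $+\infty$; both follow immediately from the compact support and continuous differentiability of the images together with the boundedness of $\Sigma$. The main subtlety to keep in mind is that existence here does \emph{not} assert uniqueness for the $\bfb$-update, since $D$ is non-convex; only the $\bfz$-update is unique, and this asymmetry will matter for the later convergence analysis via properties (P1)--(P3).
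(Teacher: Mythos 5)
Your proposal is correct and follows essentially the same route as the paper: existence for the $\bfb$-update via continuity of $f$ (plus the quadratic proximal term) on the compact, convex set $C$, and existence and uniqueness for the $\bfz$-update via strict convexity of the quadratic objective induced by the $\rho h^3$ augmentation term. The additional points you flag (nonemptiness of $C$, the closed-form solution via $\widetilde{\bfG}$, and the asymmetry that only the $\bfz$-update is unique) are consistent with, and slightly more explicit than, the paper's own argument.
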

\begin{proof}
	For arbitrary but fixed $k$, the first subproblem, the update $\bfb^k \to \bfb^{k+1}$, consists of minimizing the function
	\begin{equation*}
		f(\bfb) +\frac{\rho h^3}{2}\left\|\bfb-\bfz^k+\bfu^k\right\|^2
	\end{equation*}
	over the compact and convex set $C$. On this set $f$ is twice continuously differentiable, so the problem is well-defined, i.e., there exists a global solution.
	The second subproblem, the update $\bfz^k \to \bfz^{k+1}$, consists of minimizing 
	\begin{equation*}
		g(\bfz) +\frac{\rho
		  h^3}{2}\left\|\bfb^{k+1}-\bfz+\bfu^k\right\|^2
	\end{equation*}
	over $\bfz \in \R^n$. This problem is well defined, since $g$ is a convex quadratic function and $\rho>0$, which renders the overall objective function strictly convex. Thus, there exists a unique global minimizer.
\end{proof}
Next we show that the augmented Lagrangian decreases sufficiently after the first ADMM step~\eqref{eq:UadmmStep1}.
\begin{lemma}\label{lem:bdescent}
	If $\rho \geq \frac{2}{h^3} L_f$, then the update $\bfb^k \to \bfb^{k+1}$ does not increase the value of the augmented Lagrangian. More precisely, for $\rho>0$
	\begin{equation*}
		\CL_\rho(\bfb^k,\bfz^k,\bfy^k)-\CL_\rho(\bfb^{k+1},\bfz^k,\bfy^k)
		\geq	
		\left(\frac{\rho h^3}{2}-L_f\right) \left\|\bfb^k-\bfb^{k+1}\right\|^2.
	\end{equation*}
	\end{lemma}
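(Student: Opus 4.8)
The plan is to reduce the claimed Lagrangian decrease to a proximal-type descent estimate for the $\bfb$-subproblem and then exploit the separate structure of its two summands. First I would note that in the difference $\CL_\rho(\bfb^k,\bfz^k,\bfy^k)-\CL_\rho(\bfb^{k+1},\bfz^k,\bfy^k)$ every term that does not depend on $\bfb$ cancels: the summand $g(\bfz^k)$ is common to both, and the constant produced by completing the square in the multiplier term drops out. Completing that square turns the multiplier contribution $(\bfy^k)^\top(\bfb-\bfz^k)+\frac{\rho h^3}{2}\|\bfb-\bfz^k\|^2$ into $\frac{\rho h^3}{2}\|\bfb-\bfz^k+\bfu^k\|^2$ up to a $\bfb$-independent constant, so the difference equals $P(\bfb^k)-P(\bfb^{k+1})$, where $P(\bfb)=f(\bfb)+\frac{\rho h^3}{2}\|\bfb-\bfz^k+\bfu^k\|^2$ and $\bfb^{k+1}=\argmin_{\bfb\in C}P(\bfb)$ exists by Lemma~\ref{lemm:bzupdate}. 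If $\bfb^k\notin C$ the left-hand side is $+\infty$ and there is nothing to prove, so I may assume $\bfb^k\in C$; the minimizer $\bfb^{k+1}$ lies in $C$ automatically.

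Next I would bound the two summands of $P$ separately. Since $\nabla f$ is $L_f$-Lipschitz on $C$ and the segment $[\bfb^{k+1},\bfb^k]$ stays in $C$ by convexity, the standard descent-lemma estimate gives $f(\bfb^k)-f(\bfb^{k+1})\geq \nabla f(\bfb^{k+1})^\top(\bfb^k-\bfb^{k+1})-\frac{L_f}{2}\|\bfb^k-\bfb^{k+1}\|^2$; the non-convexity of $f$ costs only this negative quadratic remainder. The quadratic summand $q(\bfb)=\frac{\rho h^3}{2}\|\bfb-\bfz^k+\bfu^k\|^2$ has the constant Hessian $\rho h^3\bfI(n)$, so it admits an \emph{exact} second-order expansion contributing $\nabla q(\bfb^{k+1})^\top(\bfb^k-\bfb^{k+1})+\frac{\rho h^3}{2}\|\bfb^k-\bfb^{k+1}\|^2$. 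Adding the two and grouping the gradient pieces into $\nabla P(\bfb^{k+1})^\top(\bfb^k-\bfb^{k+1})$ is the one algebraic step.

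The decisive input is the first-order optimality of $\bfb^{k+1}$ over the convex set $C$: as $P$ is differentiable and $C$ is convex, the variational inequality $\nabla P(\bfb^{k+1})^\top(\bfb-\bfb^{k+1})\geq 0$ holds for every $\bfb\in C$, and I would test it at the feasible point $\bfb=\bfb^k$. This annihilates the cross term and leaves $P(\bfb^k)-P(\bfb^{k+1})\geq(\frac{\rho h^3}{2}-\frac{L_f}{2})\|\bfb^k-\bfb^{k+1}\|^2$, which already dominates the asserted bound $(\frac{\rho h^3}{2}-L_f)\|\bfb^k-\bfb^{k+1}\|^2$ since $L_f\geq 0$. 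Under the hypothesis $\rho\geq\frac{2}{h^3}L_f$ the coefficient $\frac{\rho h^3}{2}-L_f$ is nonnegative, so the $\bfb$-step does not increase $\CL_\rho$.

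I expect the only delicate points to be bookkeeping rather than anything deep. The two that require care are: ensuring $\bfb^k\in C$ so that the variational inequality may legitimately be tested against it (handled by the trivial $+\infty$ case above), and keeping the sign of the non-convex remainder correct in the descent lemma — it must enter with a minus sign, and it is precisely this term that forces the threshold on $\rho$. Everything else is the routine combination of an $L_f$-smoothness estimate, an exact quadratic identity, and an optimality inequality, requiring no further compactness or existence argument beyond Lemma~\ref{lemm:bzupdate}.
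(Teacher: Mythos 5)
Your proposal is correct and follows essentially the same route as the paper's proof: your quantity $\nabla q(\bfb^{k+1})=\bfy^k+\rho h^3(\bfb^{k+1}-\bfz^k)$ is exactly the paper's $\bfd^{k+1}$, your variational inequality $\nabla P(\bfb^{k+1})^\top(\bfb^k-\bfb^{k+1})\geq 0$ is the paper's inequality~\cref{eq:proxgrad} tested at $\bfw=\bfb^k$, and both arguments then combine it with the Lipschitz estimate on $f$ and the exact quadratic expansion. Your bookkeeping is in fact marginally tidier — the descent-lemma constant $\frac{L_f}{2}$ is sharper than the paper's $L_f$ (both dominate the claimed bound), and you make explicit the feasibility of $\bfb^k$, which the paper leaves implicit.
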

\begin{proof}
 Denote $\bfd^{k+1} = \bfy^{k+1} + \rho h^3(\bfz^{k+1}-\bfz^k)$. From the optimality condition for~\cref{eq:UadmmStep1} we conclude that $\bfb^{k+1}$ satisfies
\begin{align}
 &\bfzero \in \nabla f(\bfb^{k+1}) + \partial\iota_C(\bfb^{k+1}) + \bfy^k+\rho h^3(\bfb^{k+1}-\bfz^k)\nonumber\\
 \Leftrightarrow \quad &\bfzero \in \nabla f(\bfb^{k+1}) + \partial\iota_C(\bfb^{k+1}) + \bfy^{k+1} + \rho h^3(\bfz^{k+1}-\bfz^k)\nonumber\\
  \Leftrightarrow \quad & -\left(\nabla f(\bfb^{k+1})+ \bfy^{k+1} + \rho h^3(\bfz^{k+1}-\bfz^k)\right)\in\partial\iota_C(\bfb^{k+1})\nonumber\\
  \Leftrightarrow \quad & \left(\nabla f(\bfb^{k+1}) + \bfd^{k+1}\right)^\top\left(\bfw-\bfb^{k+1}\right)\geq 0 \quad \forall \bfw\in C\label{eq:proxgrad}.
\end{align}
 Here, we used that the subgradient of an indicator function of a convex set $C$ is
\begin{equation*}
	\partial \iota_C(\bfb^{k+1}) = \left\{\,\bfd \in \R^n \,:\,\bfd^\top (\bfw - \bfb^{k+1}) \leq 0, \, \forall \bfw \in C\,\right\},
\end{equation*}
see also \cite[Ch. 8]{Rockafeller2009}.
 Denoting  $\CL_\rho^k=\CL_\rho(\bfb^k,\bfz^k,\bfy^k)$ and $\CL_\rho^{k+1}=\CL_\rho(\bfb^{k+1},\bfz^k,\bfy^k)$ we get
 \begin{align*}
 \CL_\rho^k-\CL_\rho^{k+1} & = f(\bfb^k)-f(\bfb^{k+1}) + {(\bfy^k)}^\top(\bfb^k-\bfb^{k+1})+\frac{\rho h^3}{2}\left( \left\|\bfb^k-\bfz^k\right\|^2-\left\|\bfb^{k+1}-\bfz^k\right\|^2\right)\\
 &= f(\bfb^k)-f(\bfb^{k+1}) + {(\bfd^{k+1})}^\top(\bfb^k-\bfb^{k+1})+\frac{\rho h^3}{2}\left\|\bfb^k-\bfb^{k+1}\right\|^2\nonumber\\
 &= f(\bfb^k)-f(\bfb^{k+1}) -\nabla f(\bfb^{k+1})^\top(\bfb^k-\bfb^{k+1}) \nonumber\\&\qquad +\left(\nabla f(\bfb^{k+1})+ {\bfd^{k+1}}\right)^\top(\bfb^k-\bfb^{k+1})+\frac{\rho h^3}{2}\left\|\bfb^k-\bfb^{k+1}\right\|^2\nonumber\\
 &\geq \left(\frac{\rho h^3}{2}-L_f\right)\left\|\bfb^k-\bfb^{k+1}\right\|^2\nonumber,
 \end{align*}
where we used~\cref{eq:proxgrad} and the Lipschitz continuity of $\nabla f$ in the last step.
\end{proof}
A similar result also holds for the second ADMM step~\eqref{eq:UadmmStep2}.
\begin{lemma}\label{lem:zdescent}
	For any $\rho>0$, the update $\bfz^k \to \bfz^{k+1}$ does not increase the value of the augmented Lagrangian. More precisely,
	\begin{equation*}
	\CL_\rho(\bfb^{k+1},\bfz^k,\bfy^k)-\CL_\rho(\bfb^{k+1},\bfz^{k+1},\bfy^k)\geq \frac{\rho h^3}{2}\left\|\bfz^k-\bfz^{k+1}\right\|^2.
	\end{equation*}
\end{lemma}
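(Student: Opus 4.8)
The plan is to exploit that, in contrast to the $\bfb$-subproblem treated in \cref{lem:bdescent}, the $\bfz$-update minimizes a genuinely strongly convex quadratic, so the decrease can be computed \emph{exactly} rather than estimated through Lipschitz bounds. First I would freeze $\bfb=\bfb^{k+1}$ and $\bfy=\bfy^k$ and view the augmented Lagrangian~\cref{eq:Lp} as a function of $\bfz$ alone. Discarding the terms that are constant in $\bfz$, this function is
\[
	\phi(\bfz) = g(\bfz) - (\bfy^k)^\top\bfz + \frac{\rho h^3}{2}\left\|\bfb^{k+1}-\bfz\right\|^2 .
\]
Since $g$ in~\cref{eq:fg} is a convex quadratic, $\phi$ is a strictly convex quadratic whose constant Hessian is exactly $\widetilde{\bfG} = \alpha h^3\left(\bfD_2^\top\bfD_2+\bfD_3^\top\bfD_3\right) + \rho h^3\bfI(n)$, as already recorded for the $\bfz$-step~\cref{eq:UadmmStep2}.

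The central step is that $\bfz^{k+1}$ is the \emph{exact} global minimizer of $\phi$ (its existence and uniqueness are guaranteed by \cref{lemm:bzupdate}), so that $\nabla\phi(\bfz^{k+1})=\bfzero$. Because $\phi$ is quadratic, its second-order Taylor expansion about $\bfz^{k+1}$ is exact, and the first-order term drops out, leaving the clean identity
\[
	\phi(\bfz^k)-\phi(\bfz^{k+1}) = \frac{1}{2}\left(\bfz^k-\bfz^{k+1}\right)^\top\widetilde{\bfG}\left(\bfz^k-\bfz^{k+1}\right).
\]
I would then note that the $\bfz$-independent terms of $\CL_\rho(\bfb^{k+1},\cdot\,,\bfy^k)$ cancel in the difference, so the left-hand side equals exactly $\CL_\rho(\bfb^{k+1},\bfz^k,\bfy^k)-\CL_\rho(\bfb^{k+1},\bfz^{k+1},\bfy^k)$.

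Finally I would bound the quadratic form from below. As $\bfD_2^\top\bfD_2+\bfD_3^\top\bfD_3$ is positive semi-definite, we have $\widetilde{\bfG}\succeq\rho h^3\bfI(n)$, whence $\left(\bfz^k-\bfz^{k+1}\right)^\top\widetilde{\bfG}\left(\bfz^k-\bfz^{k+1}\right)\geq\rho h^3\left\|\bfz^k-\bfz^{k+1}\right\|^2$, which delivers the claimed estimate. I expect no genuine obstacle here: the argument is clean precisely because $g$ is exactly quadratic, so there is no Lipschitz trade-off and no condition on $\rho$ beyond positivity is required — unlike in \cref{lem:bdescent}, where the non-convexity of $f$ forced $\rho\geq\frac{2}{h^3}L_f$. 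The only point that warrants a moment of care is the bookkeeping of the cancelling $\bfz$-independent terms when passing between $\phi$ and $\CL_\rho$.
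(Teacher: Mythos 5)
Your proof is correct, and it takes a somewhat different route than the paper's. The paper's argument uses only convexity and differentiability of $g$: it expands the difference of Lagrangians, uses the optimality condition of the $\bfz$-update (combined with the dual update) to identify $\bfy^{k+1} = \nabla g(\bfz^{k+1})$ and rewrite the linear terms, and then concludes with the gradient inequality $g(\bfz^k)-g(\bfz^{k+1})-\nabla g(\bfz^{k+1})^\top(\bfz^k-\bfz^{k+1}) \geq 0$. You instead exploit that $g$ in \cref{eq:fg} is exactly quadratic, so the restricted Lagrangian $\phi$ is a quadratic with constant Hessian $\widetilde{\bfG} = \alpha h^3\left(\bfD_2^\top\bfD_2+\bfD_3^\top\bfD_3\right)+\rho h^3\bfI(n)$; since $\bfz^{k+1}$ is its exact (unique) minimizer, the decrease equals exactly $\frac{1}{2}\left(\bfz^k-\bfz^{k+1}\right)^\top\widetilde{\bfG}\left(\bfz^k-\bfz^{k+1}\right)$, and the claim follows from $\widetilde{\bfG}\succeq \rho h^3\bfI(n)$. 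The two computations are algebraically equivalent (for quadratic $g$ the Bregman term the paper bounds by zero is precisely your $\frac{1}{2}(\bfz^k-\bfz^{k+1})^\top\nabla^2 g\,(\bfz^k-\bfz^{k+1})$), but the bookkeeping differs. What your version buys: an exact identity for the decrease, which is strictly stronger than the stated inequality, and no reference to the dual variable at all. What the paper's version buys: it generalizes verbatim to any smooth convex $g$, and the identity $\bfy^{k+1}=\nabla g(\bfz^{k+1})$ it establishes en route is reused later in the proofs of (P1) and (P3) of \cref{theo:principles} — if one adopted your proof, that identity would still have to be derived separately (though it is immediate from the same optimality condition).
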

\begin{proof}
	From the optimality condition for~\cref{eq:UadmmStep2} we conclude that $\bfz^{k+1}$ satisfies
	\begin{equation*}
		0 = \nabla g(\bfz^{k+1}) - \bfy^k - \rho h^3 (\bfb^{k+1} - \bfz^{k+1}) = \nabla g(\bfz^{k+1}) - \rho h^3 \bfy^{k+1}
	\end{equation*}
	and therefore $\bfy^{k+1} = \nabla g(\bfz^{k+1})$.
	Similar to before, denoting $\CL_\rho^k=\CL_\rho(\bfb^{k+1},\bfz^k,\bfy^k)$ and $\CL_\rho^{k+1}=\CL_\rho(\bfb^{k+1},\bfz^{k+1},\bfy^k)$ we get
	\begin{align*}
	\CL_\rho^k-\CL_\rho^{k+1} & = g(\bfz^k)-g(\bfz^{k+1}) - {(\bfy^k)}^\top(\bfz^k-\bfz^{k+1}) + \frac{\rho h^3}{2}\left(\left\|\bfb^{k+1}-\bfz^k\right\|^2-\left\|\bfb^{k+1}-\bfz^{k+1}\right\|^2\right)\\
	& = g(\bfz^k)-g(\bfz^{k+1}) -{(\bfy^{k+1})}^\top(\bfz^k-\bfz^{k+1})+\frac{\rho h^3}{2}\left\|\bfz^k-\bfz^{k+1}\right\|^2\nonumber\\
	& = g(\bfz^k)-g(\bfz^{k+1}) -\nabla g(\bfz^{k+1})^\top (\bfz^k-\bfz^{k+1}) + \frac{\rho h^3}{2}\left\|\bfz^k-\bfz^{k+1}\right\|^2\nonumber\\
	&\geq \frac{\rho h^3}{2}\left\|\bfz^k-\bfz^{k+1}\right\|^2
	\end{align*}
	where we used the convexity of $g$ in the last step.
\end{proof}
We now show an analogous result for the third ADMM step~\eqref{eq:UadmmStep3}.
\begin{lemma}\label{lem:ydescent}
	For any $\rho>0$, the update $\bfy^k \to \bfy^{k+1}$ does not increase the value of the augmented Lagrangian, i.e.,
	\begin{equation*}
	\CL_\rho(\bfb^{k+1},\bfz^{k+1},\bfy^k)-\CL_\rho(\bfb^{k+1},\bfz^{k+1},\bfy^{k+1})\geq 0.
	\end{equation*}
\end{lemma}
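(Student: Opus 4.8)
The plan is to prove this last inequality by a short direct computation, using the observation that among all the terms of the augmented Lagrangian $\CL_\rho$ in~\eqref{eq:Lp} only the linear coupling term $\bfy^\top(\bfb-\bfz)$ depends on the dual variable, while the primal iterates $\bfb^{k+1}$ and $\bfz^{k+1}$ are frozen throughout the $\bfy$-update. First I would subtract the two Lagrangian values and cancel every $\bfy$-independent contribution, namely $\iota_C(\bfb^{k+1})$, $f(\bfb^{k+1})$, $g(\bfz^{k+1})$, and the quadratic penalty $\frac{\rho h^3}{2}\|\bfb^{k+1}-\bfz^{k+1}\|^2$. This collapses the left-hand side to a single inner product,
\[
\CL_\rho(\bfb^{k+1},\bfz^{k+1},\bfy^k)-\CL_\rho(\bfb^{k+1},\bfz^{k+1},\bfy^{k+1}) = (\bfy^k-\bfy^{k+1})^\top(\bfb^{k+1}-\bfz^{k+1}),
\]
so that the entire claim is reduced to determining the sign of this one scalar.

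Second, I would substitute the multiplier update~\eqref{eq:UadmmStep3}. Writing the primal residual as $\bfr^{k+1} := \bfb^{k+1}-\bfz^{k+1}$, the update reads $\bfy^{k+1}-\bfy^k = \rho h^3\,\bfr^{k+1}$, equivalently $\bfr^{k+1} = \frac{1}{\rho h^3}(\bfy^{k+1}-\bfy^k)$. Replacing one of the two factors in the inner product above by this identity turns the change of the Lagrangian into a fixed scalar multiple of $\|\bfy^{k+1}-\bfy^k\|^2$ (equivalently of $\|\bfr^{k+1}\|^2$), in which the only remaining constant is the strictly positive augmentation weight $\rho h^3>0$. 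Since $\rho,h>0$ by assumption, the sign of the change is then read off directly from this squared-norm form, and the claimed inequality follows. I stress that, unlike \cref{lem:bdescent,lem:zdescent}, no optimality condition and no Lipschitz constant are needed at this stage: the conclusion is purely algebraic.

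The step I expect to require the most care is the bookkeeping of signs through these two substitutions, precisely because the dual update is genuinely different in character from the two primal updates: in \cref{lem:bdescent,lem:zdescent} the decrease arose from first-order optimality conditions, whereas here the sign is dictated entirely by the algebraic structure of the multiplier recursion, and a single misplaced sign would invert the inequality. To make the estimate usable downstream I would additionally record the equivalent representation obtained from the $\bfz$-step optimality relation $\bfy^{k+1}=\nabla g(\bfz^{k+1})$ proved in \cref{lem:zdescent} (together with $\bfy^k=\nabla g(\bfz^k)$ from the previous sweep), which rewrites $\rho h^3\,\bfr^{k+1}$ as the gradient difference $\nabla g(\bfz^{k+1})-\nabla g(\bfz^k)$. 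This representation is what later lets one control the per-step quantity by $\|\bfz^k-\bfz^{k+1}\|^2$ through the $L_g$-Lipschitz continuity of $\nabla g$, and it is this coupling that allows the three per-step estimates from \cref{lem:bdescent,lem:zdescent} and this lemma to be combined into the sufficient-decrease property~(P2).
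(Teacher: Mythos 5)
Your reduction of the Lagrangian difference to the single inner product $(\bfy^k-\bfy^{k+1})^\top(\bfb^{k+1}-\bfz^{k+1})$ is correct and is exactly how the paper proceeds, but the final step of your plan --- reading off a nonnegative sign after substituting the dual update --- fails, and it fails for precisely the reason you yourself flagged as delicate. From \cref{eq:UadmmStep3} one has $\bfy^k-\bfy^{k+1} = -\rho h^3(\bfb^{k+1}-\bfz^{k+1})$, so that
\begin{equation*}
\CL_\rho(\bfb^{k+1},\bfz^{k+1},\bfy^k)-\CL_\rho(\bfb^{k+1},\bfz^{k+1},\bfy^{k+1})
= -\rho h^3\left\|\bfb^{k+1}-\bfz^{k+1}\right\|^2
= -\frac{1}{\rho h^3}\left\|\bfy^{k+1}-\bfy^k\right\|^2 \leq 0,
\end{equation*}
which has the \emph{opposite} sign of what you claim: replacing $\bfb^{k+1}-\bfz^{k+1}$ by $\frac{1}{\rho h^3}(\bfy^{k+1}-\bfy^k)$ leaves the factor $\bfy^k-\bfy^{k+1} = -(\bfy^{k+1}-\bfy^k)$, and that minus sign is exactly what your "scalar multiple" argument drops. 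The dual ascent step \emph{increases} the augmented Lagrangian whenever the primal residual is nonzero (as it must, since it is a gradient ascent step in $\bfy$), so the lemma as stated is false in general. You are in good company: the paper's own proof asserts $(\bfy^k-\bfy^{k+1})^\top(\bfb^{k+1}-\bfz^{k+1}) = +\rho h^3\|\bfb^{k+1}-\bfz^{k+1}\|^2$, committing the identical sign error, since that identity would require the update rule $\bfy^{k+1}=\bfy^k-\rho h^3(\bfb^{k+1}-\bfz^{k+1})$, contradicting \cref{eq:UadmmStep3}.

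The good news is that your closing paragraph already contains the standard repair, which is also how~\cite{WangEtAl2015} handles this step. Using $\bfy^{k+1}=\nabla g(\bfz^{k+1})$ from the optimality condition in \cref{lem:zdescent} and the Lipschitz continuity of $\nabla g$ (cf.~\cref{eq:ybound}), the increase caused by the dual step is bounded by $\frac{1}{\rho h^3}\|\bfy^{k+1}-\bfy^k\|^2 \leq \frac{L_g^2}{\rho h^3}\|\bfz^k-\bfz^{k+1}\|^2$, and this is absorbed by the decrease $\frac{\rho h^3}{2}\|\bfz^k-\bfz^{k+1}\|^2$ guaranteed by \cref{lem:zdescent}: under the standing assumption $\rho h^3 > 2L_g$ the net coefficient satisfies $\frac{\rho h^3}{2}-\frac{L_g^2}{\rho h^3} > \frac{L_g}{2} > 0$. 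So the correct statement of this lemma is an upper bound on the \emph{increase}, not a claim of descent, and property (P2) of \cref{theo:principles} survives with the constant $C_1 = \min\bigl\{\frac{\rho h^3}{2}-L_f,\ \frac{\rho h^3}{2}-\frac{L_g^2}{\rho h^3}\bigr\}$; the overall convergence theorem is unaffected.
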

\begin{proof}
	Denoting $\CL_\rho^k=\CL_\rho(\bfb^{k+1},\bfz^{k+1},\bfy^k)$ and $\CL_\rho^{k+1}=\CL_\rho(\bfb^{k+1},\bfz^{k+1},\bfy^{k+1})$ we get
	\begin{align*}
	\CL_\rho^k-\CL_\rho^{k+1} & = (\bfy^k-\bfy^{k+1})^\top(\bfb^{k+1}-\bfz^{k+1})
	 = \rho h^3\left\|\bfb^{k+1}-\bfz^{k+1}\right\|^2
	 \geq 0.
	\end{align*}
\end{proof}
Having established the above results, we can now verify that our problem satisfies the three properties in~\cref{theo:principles}.
\begin{proof}[Proof of \cref{theo:principles}]
	We note that (P2), the sufficient decrease, follows immediately from \cref{lem:bdescent,lem:zdescent,lem:ydescent} with
	$C_1 = \frac{\rho h^3}{2}-L_f > 0$.
	\begin{description}
		\item[(P1)] The well-definedness of the updates $\bfb^k \to \bfb^{k+1}$ and $\bfz^k \to \bfz^{k+1}$ and thus also the update $\bfy^k \to \bfy^{k+1}$ follows from~\cref{lemm:bzupdate}. We next show that the sequence $\{ \CL(\bfb^k,\bfz^k, \bfy^k) \}_k$ is bounded below. First, note that for each $k$
		\begin{equation*}
			f(\bfb^k) + g(\bfb^k) \geq 0.
		\end{equation*}
		Using that $\bfy^k = \nabla g(\bfz^k)$ for all $k$ and
		the Lipschitz continuity of $\nabla g$ we note that
		\begin{equation}\label{eq:ybound}
			\left\| \bfy^{k+1} - \bfy^k\right\| \leq L_g \left\| \bfz^{k+1} - \bfz^k\right\|.
		\end{equation}
		Combining this with $\rho h^3 \geq 2 L_g$ we obtain
		\begin{align}\label{eq:Lbound}
		\CL_\rho(\bfb^k,\bfz^k,\bfy^k) &= f(\bfb^k)+g(\bfz^k)+{(\bfy^k)}^\top(\bfb^k-\bfz^k)+\frac{\rho h^3}{2}\left\|\bfb^k-\bfz^k\right\|^2\\
		&= f(\bfb^k)+g(\bfb^k) + g(\bfz^k) - g(\bfb^k) - \nabla g(\bfz^k)^\top(\bfz^k-\bfb^k)+\frac{\rho h^3}{2}\left\|\bfb^k-\bfz^k\right\|^2\nonumber\\
		&\geq f(\bfb^k)+g(\bfb^k) + \left(\frac{\rho h^3}{2}-L_g\right)\left\|\bfb^k-\bfz^k\right\|^2\nonumber\\
		&\geq 0,\nonumber
		\end{align}
		which shows that $\left\{\CL_\rho(\bfb^k,\bfz^k,\bfy^k)\right\}_k$ is lower bounded by zero.		
		Finally, we need to show the boundedness of
		$\lbrace\bfb^k,\bfz^k,\bfy^k\rbrace$. For $\bfb^k$ this
		directly follows from the boundedness of $C$. From (P2)
		we know that $\CL_\rho(\bfb^k,\bfz^k,\bfy^k)$ is
		monotonically decreasing and therefore bounded above by
		$\CL_\rho(\bfb^0,\bfz^0,\bfy^0)$. Using this, the
		boundedness of $\lbrace\bfb^k\rbrace$, \cref{eq:Lbound},
		and the lower boundedness of $f(\bfb^k)+g(\bfb^k)$, we
		conclude the boundedness of $\lbrace\bfz^k\rbrace$.
		Finally, again using $\bfy^k = \nabla g(\bfz^k)$ and the
		Lipschitz continuity of $\nabla g$, we have
		\begin{align*}
		\textstyle \left\|\bfy^k\right\| &= \left\|\nabla g(\bfz^k)\right\| 
		\leq \left\|\nabla g(\bfz^k)-\nabla g(\bfzero)\right\| + \left\|\nabla g(\bfzero)\right\| \leq L_g \left\|\bfz^k\right\| + \left\|\nabla g(\bfzero)\right\|,
		\end{align*}
		which shows the boundedness of $\lbrace\bfy^k\rbrace$.%
		\item[(P3)] We need to bound the derivatives of $\CL_{\rho}$. First, we note that
		\begin{align*}
			\partial_\bfb \CL_\rho(\bfb^{k+1},\bfz^{k+1},\bfy^{k+1})  
			&= \nabla f(\bfb^{k+1}) + \partial\iota_C(\bfb^{k+1}) + \bfy^{k+1} + \rho h^3(\bfb^{k+1} - \bfz^{k+1})\\
			&= \nabla f(\bfb^{k+1}) + \partial\iota_C(\bfb^{k+1}) + \bfy^k + \rho h^3(\bfb^{k+1} - \bfz^k) \\&\qquad + \bfy^{k+1} - \bfy^k + \rho h^3(\bfz^k-\bfz^{k+1}).
		\end{align*}
		The optimality condition of~\cref{eq:UadmmStep1} implies that $\bfzero \in f(\bfb^{k+1}) + \partial\iota_C(\bfb^{k+1}) + \bfy^k + \rho h^3(\bfb^{k+1} - \bfz^k)$ and thus
		\begin{equation*}
		\bfy^{k+1} - \bfy^k + \rho h^3(\bfz^k-\bfz^{k+1}) \in \partial_\bfb \CL_\rho(\bfb^{k+1},\bfz^{k+1},\bfy^{k+1}), 
		\end{equation*}
		which is bounded by $(L_g+\rho h^3)\left\|\bfz^k-\bfz^{k+1}\right\|$ due to~\cref{eq:ybound}.
		Second, we note that
		\begin{align*}
		\left\|\partial_\bfz \CL_\rho(\bfb^{k+1},\bfz^{k+1},\bfy^{k+1})\right\| &= \left\|\nabla g(\bfz^{k+1}) -\bfy^{k+1} - \rho h^3(\bfb^{k+1}-\bfz^{k+1})\right\| \\
		& = \left\|\bfy^k-\bfy^{k+1}\right\| \leq L_g\left\|\bfz^k-\bfz^{k+1}\right\|.
		\end{align*}
		Finally, we note that
		\begin{align*}
		\left\|\partial_\bfy \CL_\rho(\bfb^{k+1},\bfz^{k+1},\bfy^{k+1}) \right\| &= \left\|\bfb^{k+1}-\bfz^{k+1}\right\|\\
		& = \left\|\frac{1}{\rho h^3}(\bfy^{k+1}-\bfy^k)\right\| \leq \frac{L_g}{\rho h^3}\left\|\bfz^k-\bfz^{k+1}\right\|.
		\end{align*}
		Therefore, setting $C_2 = \max\lbrace\frac{1}{2},3L_g\rbrace$ there exists  $\bfd^{k+1}\in\partial \CL_\rho(\bfb^{k+1},\bfz^{k+1},\bfy^{k+1})$ as claimed.
	\end{description}
\end{proof}
Finally, we conclude by proving the main result, which is done exactly as in \cite{WangEtAl2015} by using the properties  (P1)--(P3).
 \begin{proof}[Proof of \cref{thm:ourconvadmm}]
	In \cref{theo:principles} we have established that the
	properties (P1)--(P3) hold for the iterates generated
	by~\cref{eq:UadmmStep1,eq:UadmmStep2,eq:UadmmStep3}, provided
	that $\rho > \frac{2}{h^3} \max\lbrace L_f, L_g\rbrace$. From
	(P1) we know that the set of iterates
	$\lbrace\bfb^k,\bfz^k,\bfy^k\rbrace$ is bounded, so it has a
	convergent subsequence. We denote a limit point by
	$(\bfb^*,\bfz^*,\bfy^*)$. Also from (P1) we know that the
	sequence $\left\{\CL_\rho(\bfb^k,\bfz^k,\bfy^k)\right\}_k$ is bounded
	below. By~(P2) it is also monotonically and sufficiently
	decreasing and this implies
	$\left\|\bfb^k-\bfb^{k+1}\right\|\rightarrow 0$ and
	$\left\|\bfz^k-\bfz^{k+1}\right\|\rightarrow 0$. Finally, by (P3), we get
	that there exists a subgradient $\bfd^k\in\partial\CL_\rho(\bfb^k,\bfz^k,\bfy^k)$ with $\left\|\bfd^k\right\|\rightarrow 0$, which shows that $\bfzero\in\partial\CL_\rho(\bfb^*,\bfz^*,\bfy^*)$, and thus $(\bfb^*,\bfz^*,\bfy^*)$ is a stationary point.
\end{proof}
%
The lower bound for $\rho$ depends on the Lipschitz constants $L_f$ and $L_g$ of the distance and regularization function, respectively, as well as the voxel size $h$. The Lipschitz constants are commonly not available in practice. In our numerical experiments in \cref{sec:experiments} we use a modified version of the
adaptive augmentation parameter choice described in~\cite{BoydEtAl2011},
which ensures that the augmentation parameter $\rho$ remains larger than an
experimentally defined lower bound $\rho_{\rm min}$, which we chose equal for all steps in the multilevel optimization. We compare this
parameter choice method with a constant choice of the augmentation parameter
and the unmodified adaptive scheme proposed for convex problems in~\cite{BoydEtAl2011}.

\subsection{Multilevel Strategy} 
\label{sub:multilevel}
As common in image registration and also suggested in~\cite{RuthottoEtAlPMB2012}, we employ a multilevel approach for solving~\cref{eq:varProb}; see~\cite{Modersitzki2009} for details. In a nutshell, we start by solving a discrete version of~\cref{eq:varProb} on a relatively coarse grid.
 Then, we prolongate the estimated field inhomogeneity to a finer grid to serve as a starting guess 
for the next discrete optimization problem. On each level, the resolution of the image data is increased as well and the procedure is repeated until the desired resolution is achieved.
Apart from reducing computational costs on a coarse grid and obtaining
excellent starting guesses, multilevel approaches have been observed to
be more robust against local minima, which are less likely to occur in
the coarse grid discretization; see ~\cref{fig:multilevel} for an example.  
\begin{figure}[t]
	\scriptsize
	\renewcommand{\rottext}[1]{\rotatebox{90}{\hbox to 18 mm{\hss #1\hss}}}
	\newcommand{\image}[1]{\includegraphics[width=23mm,trim=62 140 62 20, clip=true]{./img/multilevel/#1}}
  \begin{center}
    \begin{tabular}{@{}c@{ }c@{ }cc@{ }cc@{ }c@{}}
		& \multicolumn{2}{c}{$32\times32$}
		& \multicolumn{2}{c}{$64\times64$}
		& \multicolumn{2}{c}{$128\times128$}\\
	\rottext{initial data}&
    \image{level5_I1-crop.pdf} &
    \image{level5_I2-crop.pdf} &
    \image{level6_I1-crop.pdf} &
    \image{level6_I2-crop.pdf} &
    \image{level7_I1-crop.pdf} &
    \image{level7_I2-crop.pdf} \\
	\rottext{inhomogeneity}&
    \image{level5_b0_gn-crop.pdf} &
    \image{level5_bc_gn-crop.pdf} &
    \image{level6_b0_gn-crop.pdf} &
    \image{level6_bc_gn-crop.pdf} &
    \image{level7_b0_gn-crop.pdf} &
    \image{level7_bc_gn-crop.pdf} \\
	\rottext{corrected}&
    \image{level5_I1_gn_corrected-crop.pdf} &
    \image{level5_I2_gn_corrected-crop.pdf} &
    \image{level6_I1_gn_corrected-crop.pdf} &
    \image{level6_I2_gn_corrected-crop.pdf} &
    \image{level7_I1_gn_corrected-crop.pdf} &
    \image{level7_I2_gn_corrected-crop.pdf} 
  \end{tabular}
  \end{center}
  \caption{Multilevel example. Deformed images $\CI_v$ and $\CI_{-v}$
  (top row), estimated field inhomogeneity (middle row) and corrected
  images (bottom row) are visualized for three different discretization
  levels (coarse to fine from left to right). The image data is shown in
  pairs corresponding to the different phase encoding directions. The
  left plots for the field inhomogeneity visualize the starting
  guesses and the right plots the solutions on each level. Data is courtesy of
  Harald Kugel, University Hospital M{\"u}nster, Germany,
  cf.~\cref{sec:experiments}.}
  \label{fig:multilevel}
\end{figure}
\par
The incorporation of the Gauss-Newton method into this multilevel
framework is straight-forward, as the prolongation from a coarser grid
can just be used as a starting guess for the Gauss-Newton iteration on a
finer grid, as stated above. 
\par
For ADMM, however, there are several options how to initialize the optimization
on a finer grid discretization, given the results on the coarser
grid. We have tested and compared three
strategies:
\begin{enumerate}
	\item Prolongate all three coarse mesh vectors $\bfb_c^{k+1}$, $\bfz_c^{k+1}$, and $\bfu_c^{k+1}$ and use the resulting fine mesh vectors $\bfb_f$, $\bfz_f$, and $\bfu_f$ as initial guesses for the next level.
	\item  Restart ADMM using the fine mesh variable $\bfb_f$ for
	both $\bfz^0$ and $\bfb^0$ and set the dual variable to zero, i.e., $\bfu^0=\mathbf{0}$.
	\item Restart ADMM on the fine level using $\bfb^0 = \bfz^0 = \hf( \bfb_f + \bfz_f)$ and $\bfu^0 = \mathbf{0}$ as initial guesses.
\end{enumerate}
In our examples, we obtained comparable results for all three strategies, however, 
the third strategy performed best and is used in the subsequent experiments.



\section{Numerical Experiments} 
\label{sec:experiments}
In this section, we perform numerical experiments to compare the
effectiveness of the preconditioning techniques and the performance of
the proposed ADMM method using real 2D and 3D data. We conclude the
section by comparing the proposed methods to an existing state-of-the-art
method for susceptibility artifact correction. A MacBook Pro laptop with
2.8~GHz Intel~Core~i7 processor and 16~GB 1600~MHz DD3 memory running
MATLAB 2015A is used for all numerical experiments.

\paragraph{Test Data} 
\label{par:test_data}
The 2D data are courtesy of Harald Kugel, Department of Clinical Radiology,  University Hospital  M{\"u}nster, Germany. 
A healthy subject was measured on a 3T scanner (Gyroscan Intera/Achieva 3.0T, System Release 2.5 (Philips, Best, NL)) using a standard clinical acquisition protocol.
For this example we extracted one slice of a series of 2D spin echo EPI
measurements that are performed using opposite phase encoding directions
along the anterior posterior direction. The field of view is 240~mm~$\times$~240~mm with a slice thickness of 3.6 mm. The acquisition matrix
is $128\times 128$, resulting in a pixel size of 1.875~mm~$\times$~1.875~mm and interpolated by zero filling to
0.9375~mm~$\times$~0.9375~mm. Contrast parameters were TR~=~9473~ms and
TE~=~95~ms.
Here, we use a three-level multilevel strategy with grid sizes of $32\times 32$, $64\times 64$, and $128\times 128$.
\par
The 3D data are provided by the Human Connectome Project~\cite{VanEssen2012}. 
We use a pair of unprocessed $b=0$ weighted images of a female subject aged 31-35 (subject id 111312) acquired using reversed phase encoding direction on a 7T scanner.
The field of view is 210~mm~$\times$~210~mm~$\times$~138.5~mm and the
voxel size is 1.05~mm in all spatial directions.
As in the 2D case, we use a three-level multilevel strategy using grid
sizes of $50\times50\times33$, $100\times100\times66$, and
$200\times200\times132$.

\paragraph{Performance Of Preconditioners In 2D} 
\label{par:2d_example}
We consider the Gauss-Newton-PCG method and investigate the performance
of the preconditioners described in \cref{sub:gauss_newton_pcg}.
To allow for a direct comparison within reasonable time we consider the 2D test data described above.
The main computational burden in GN-PCG is iteratively solving a linear
system involving the approximate Hessian~\cref{eq:dJGN} in each Gauss-Newton iteration. Using PCG, the number of iterations required depends on the spectral properties of the preconditioned Hessian and in particular on the clustering of its eigenvalues. 
\par
The spectra of the approximated Hessian with and without preconditioning
are illustrated in \cref{fig:spectrum} for the final Gauss-Newton iteration on the coarse level and parameters $\alpha=200$, $\beta=10$.
It can be seen that all three preconditioners condense the spectrum around the eigenvalue $1$. 
The tightest clustering is obtained using the symmetric Gauss-Seidel
preconditioner, $\bfP_{\rm SGS}$, that is, however, the most expensive
and not easy to parallelize. The least effective preconditioner is the
Jacobi preconditioner, $\bfP_{\rm Jac}$, which is also the cheapest. A good
trade-off between clustering of the eigenvalues and efficiency is observed for the proposed
block-Jacobi preconditioner $\bfP_{\textrm{block}}$.
\begin{figure}[t]
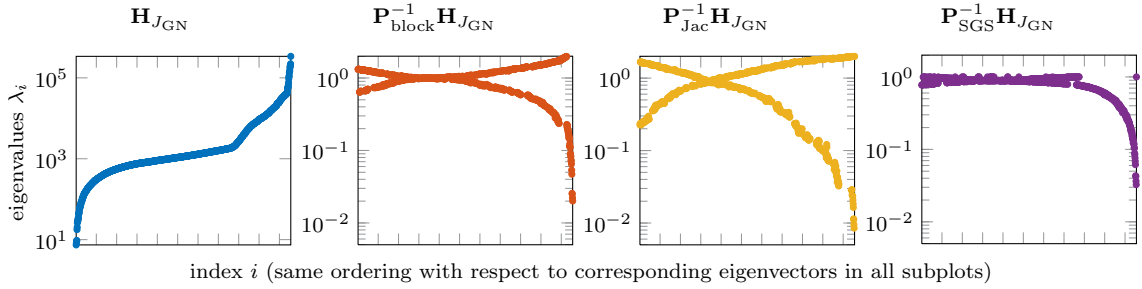

  \scriptsize
  \centering
	\setlength{\iwidth}{30mm}
	\setlength{\iheight}{25mm}
	\begin{tabular}{c@{}c@{}c@{}c@{}c}
	  & $\bfH_{J_{\rm GN}}$ & $\bfP_{\rm block}^{-1} \bfH_{J_{\rm
	  GN}}$ & $\bfP_{\rm
	  Jac}^{-1} \bfH_{J_{\rm GN}}$ & $\bfP_{\rm SGS}^{-1}
	  \bfH_{J_{\rm GN}}$\\[3mm]
		\rotatebox[origin=c]{90}{eigenvalues $\lambda_i$} &
      	\raisebox{-.5\height}{\input{./img/2Dpcg/2Dbrain_pcg_level5_it20_spectrum_H.tex}} &
        \raisebox{-.5\height}{\input{./img/2Dpcg/2Dbrain_pcg_level5_it20_spectrum_block.tex}} &
        \raisebox{-.5\height}{\input{./img/2Dpcg/2Dbrain_pcg_level5_it20_spectrum_Jac.tex}} &
        \raisebox{-.5\height}{\input{./img/2Dpcg/2Dbrain_pcg_level5_it20_spectrum_SGS.tex}} \\
        & \multicolumn{4}{c}{index $i$ (same ordering with respect to corresponding eigenvectors in all subplots)}
      \end{tabular} 
\caption{Spectra of the approximated Hessian~\cref{eq:dJGN} before
and after applying the block-Jacobi, Jacobi, and symmetric Gauss-Seidel preconditioners described in \cref{sub:gauss_newton_pcg}. As
test data, we consider the final iteration on the coarse level of the 2D
example also shown in \cref{fig:multilevel} with $\alpha=200$ and $\beta=10$.}
\label{fig:spectrum}
\end{figure}
\par
We compare the performance of the PCG solver using the different
preconditioners for the data from the coarse level of the 2D example
in \cref{fig:preconditioners}.
To illustrate the effect of the current estimate of $\bfb$ on the convergence, we compare the performance of PCG in the first and final Gauss-Newton iteration.
To demonstrate the convergence behavior, we approximately solve~\cref{eq:GNstep} to a relative residual tolerance of $10^{-6}$. Note that during the Gauss-Newton algorithm, we use a relatively large tolerance of $10^{-1}$.
Comparing the subplots in \cref{fig:preconditioners} it can be seen that the convergence is considerably faster in the first iteration, where $\bfb\equiv 0$, than in the final iteration. This effect is most pronounced for the unpreconditioned scheme but also notable for the preconditioned schemes.
While the symmetric Gauss-Seidel preconditioner uses the smallest number
of iterations overall, the proposed block-Jacobi preconditioner shows the best
performance during the first few PCG-iterations. 
Therefore, and due to the fact that this solver is parallelizable and of
low complexity, the block-Jacobi preconditioner is attractive for large-scale applications. Furthermore the symmetric Gauss-Seidel scheme has
the highest computational cost per PCG-iteration (in both the first and
final Gauss-Newton iteration), taking about 3.2~ms per PCG-iteration on average, whereas the block-Jacobi scheme only takes
about 0.3~ms per PCG-iteration on average. The Jacobi scheme is the cheapest
of the three preconditioners, taking only about 0.2~ms per
PCG-iteration on average.
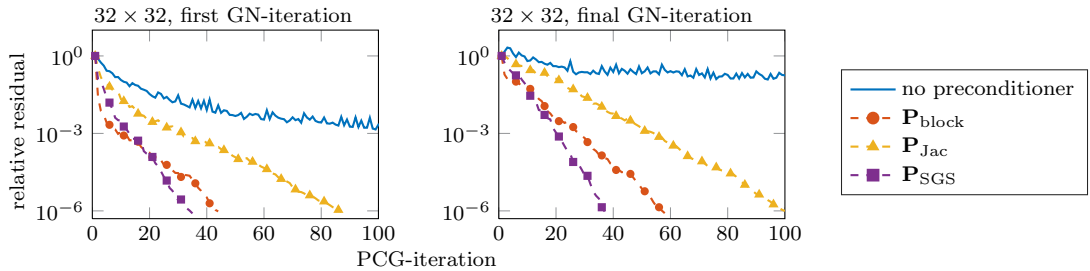
\begin{figure}[t]
  \scriptsize
  \centering
      \iwidth=40mm
      \iheight=25mm
    \begin{tabular}{c@{\,}ccc}
    & \multicolumn{1}{c}{$32\times32$, first GN-iteration} &
    \multicolumn{1}{c}{$32\times32$, final GN-iteration} \\
      \rotatebox[origin=c]{90}{relative residual} &
      \raisebox{-.5\height}{
%
%
\definecolor{mycolor1}{rgb}{0.00000,0.44700,0.74100}%
\definecolor{mycolor2}{rgb}{0.85000,0.32500,0.09800}%
\definecolor{mycolor3}{rgb}{0.92900,0.69400,0.12500}%
\definecolor{mycolor4}{rgb}{0.49400,0.18400,0.55600}%
\begin{tikzpicture}[font=\scriptsize]

\begin{axis}[%
width=0.951\iwidth,
height=\iheight,
at={(0\iwidth,0\iheight)},
scale only axis,
xmin=0,
xmax=100,
ymode=log,
ymin=5e-07,
ymax=1e1,
yminorticks=true,
axis background/.style={fill=white},
]
\addplot [color=mycolor1,solid,thick]
  table[row sep=crcr]{%
1	1\\
2	0.773483460352376\\
3	0.552414652737569\\
4	0.420164667109479\\
5	0.267321057019219\\
6	0.25568945762772\\
7	0.213183718226248\\
8	0.151218523176581\\
9	0.152529228447799\\
10	0.117006008412681\\
11	0.0997508223733809\\
12	0.0930487559081483\\
13	0.0671039306603788\\
14	0.0883778271997561\\
15	0.0747673206018275\\
16	0.0699223670770564\\
17	0.0538647647342674\\
18	0.0491656651363554\\
19	0.0441320342630557\\
20	0.0335661665378155\\
21	0.0275307768135954\\
22	0.024230781950028\\
23	0.038601897086525\\
24	0.0337852955192759\\
25	0.022601742529795\\
26	0.0259324969016983\\
27	0.0198471029519746\\
28	0.0160453772580731\\
29	0.017347039758371\\
30	0.0297745779253281\\
31	0.0142137914446069\\
32	0.013166589728052\\
33	0.0116132684296274\\
34	0.0101627436134487\\
35	0.0200975072536941\\
36	0.00973444498833182\\
37	0.0180002129986361\\
38	0.00932334613163465\\
39	0.0182750484340548\\
40	0.00810556769872901\\
41	0.0203509663453196\\
42	0.0069696080747398\\
43	0.00634516615651116\\
44	0.0121693597246019\\
45	0.00624407078238137\\
46	0.00786877400989476\\
47	0.0074112483931743\\
48	0.00522459303219179\\
49	0.00512417560627453\\
50	0.00472227806223659\\
51	0.0051280089072384\\
52	0.00624924028284581\\
53	0.00362638464755816\\
54	0.00441552724376779\\
55	0.00579752535313297\\
56	0.00513435141019345\\
57	0.00565372361555741\\
58	0.0053318525207084\\
59	0.00359121027749688\\
60	0.00343135352158448\\
61	0.00787799071686592\\
62	0.00359711867373386\\
63	0.00653785853818918\\
64	0.00486103176620457\\
65	0.00405978177057356\\
66	0.00371578201936201\\
67	0.00354927398528895\\
68	0.00269873686016991\\
69	0.00316913152053913\\
70	0.00420124780063999\\
71	0.00326041557020286\\
72	0.00360357607906809\\
73	0.00259115588575275\\
74	0.0060173922506413\\
75	0.0041367398265859\\
76	0.00340878854477991\\
77	0.00449743527888229\\
78	0.00304405402852774\\
79	0.00272823772583977\\
80	0.00264394812760827\\
81	0.00412064230882172\\
82	0.00244614343080014\\
83	0.0023811527173143\\
84	0.00261632986221021\\
85	0.00219993850105575\\
86	0.00201174840866635\\
87	0.00437460375971209\\
88	0.00206656268197094\\
89	0.00344173899639461\\
90	0.00182875752978886\\
91	0.00178534603567804\\
92	0.00416733866503954\\
93	0.00177412068324892\\
94	0.00311485058093711\\
95	0.00165971141503981\\
96	0.0016642252166337\\
97	0.00402916341941609\\
98	0.00153596788622894\\
99	0.00142303663493266\\
100	0.00220129140588062\\
101	0.0021068000814507\\
};

\addplot [color=mycolor2,dashed,mark=*,mark options={solid},mark size=0.4mm,thick,mark repeat=5]
  table[row sep=crcr]{%
1	1\\
2	0.0254638016352529\\
3	0.00736606178734242\\
4	0.00526262040759343\\
5	0.00262208041693671\\
6	0.00215330211598149\\
7	0.00158072900087025\\
8	0.00142376596209219\\
9	0.000961185983713467\\
10	0.00111573555075833\\
11	0.00082937335489474\\
12	0.000843644897172589\\
13	0.000543565445980488\\
14	0.000538243375169715\\
15	0.000461595130894214\\
16	0.000474844289527205\\
17	0.000304044438925699\\
18	0.000240563299652893\\
19	0.000170562136892782\\
20	0.000177681153016906\\
21	0.000121312179754805\\
22	0.00012143516803979\\
23	0.000100679880722299\\
24	9.19950502213665e-05\\
25	6.16234427835497e-05\\
26	6.04279047073353e-05\\
27	4.27724738414217e-05\\
28	3.74321272120149e-05\\
29	2.95812185575121e-05\\
30	2.91015893577113e-05\\
31	2.06248084473086e-05\\
32	2.49191064198453e-05\\
33	2.4062866854825e-05\\
34	2.29714849955797e-05\\
35	1.58729641324354e-05\\
36	1.16985007885748e-05\\
37	7.81926874416214e-06\\
38	6.01470804840687e-06\\
39	4.08189142817824e-06\\
40	3.32985512042124e-06\\
41	1.96882583957461e-06\\
42	1.46365001896687e-06\\
43	1.09406830637674e-06\\
44	9.57030828409568e-07\\
};

\addplot [color=mycolor3,dashed,mark=triangle*,mark options={solid},mark size=0.5mm,thick,mark repeat=5]
  table[row sep=crcr]{%
1	1\\
2	0.326499150108709\\
3	0.148335294873891\\
4	0.0954584929650848\\
5	0.08209293104841\\
6	0.0642914754668549\\
7	0.0494707923734004\\
8	0.0451922228264767\\
9	0.0424882740727033\\
10	0.0212789907937147\\
11	0.0178129029242109\\
12	0.0162613204770536\\
13	0.0105558914718277\\
14	0.0112829726397847\\
15	0.00856920591715021\\
16	0.00579586272585582\\
17	0.00526030821461556\\
18	0.0047966691367877\\
19	0.00366929551902173\\
20	0.00280207791365204\\
21	0.00277594635870879\\
22	0.00294698802796444\\
23	0.00322721015871544\\
24	0.00257881748103167\\
25	0.00191463770852511\\
26	0.00168379061468903\\
27	0.0018219746398801\\
28	0.00186215158679792\\
29	0.00135538662615863\\
30	0.00109646079778978\\
31	0.00110321500489015\\
32	0.000748753250564328\\
33	0.000689199558602155\\
34	0.000644324002648899\\
35	0.000661904732655952\\
36	0.00049828972592934\\
37	0.000562971889444797\\
38	0.000583620898043583\\
39	0.000557597982098396\\
40	0.0004114061341713\\
41	0.000410816236189356\\
42	0.000299203046302266\\
43	0.000288136887866728\\
44	0.000297732016586157\\
45	0.000246929399282897\\
46	0.000224913057955671\\
47	0.000218590168054499\\
48	0.00016832166063332\\
49	0.000136938632820325\\
50	0.000119397814799407\\
51	0.000100803237820593\\
52	0.0001141414082414\\
53	0.000106411912794032\\
54	0.000109795437372559\\
55	8.55585900833093e-05\\
56	7.94545042418465e-05\\
57	7.09075666865291e-05\\
58	6.22282271067692e-05\\
59	5.18871678545973e-05\\
60	4.61504161913369e-05\\
61	4.11245103706496e-05\\
62	3.37465696729351e-05\\
63	2.81642727949685e-05\\
64	2.58426837720377e-05\\
65	1.990695056493e-05\\
66	1.72043646006711e-05\\
67	1.6077713600598e-05\\
68	1.37869878028124e-05\\
69	1.19269704135736e-05\\
70	8.27799194438075e-06\\
71	6.74071195548067e-06\\
72	5.52040933276794e-06\\
73	5.35855805188508e-06\\
74	4.76463136673819e-06\\
75	4.05350457647075e-06\\
76	3.98387232448259e-06\\
77	3.64275238656279e-06\\
78	3.07011742281906e-06\\
79	2.53165960446996e-06\\
80	2.3452649125556e-06\\
81	2.31174878684437e-06\\
82	1.95650010943032e-06\\
83	1.67026562404535e-06\\
84	1.40154773523245e-06\\
85	1.26348721243547e-06\\
86	1.08996006772531e-06\\
87	9.42120790138933e-07\\
};

\addplot [color=mycolor4,dashed,mark=square*,mark options={solid},mark size=0.4mm,thick,mark repeat=5]
  table[row sep=crcr]{%
1	1\\
2	0.124798422725632\\
3	0.101418278358452\\
4	0.0538302685194774\\
5	0.0237217445141084\\
6	0.0155800913447028\\
7	0.0087135372894044\\
8	0.00402826033675784\\
9	0.00326465163081689\\
10	0.00313724708679477\\
11	0.00184260514770541\\
12	0.00140822150379217\\
13	0.000960354096961501\\
14	0.000604818953252854\\
15	0.000628628420787565\\
16	0.000522855373746037\\
17	0.000545326451445467\\
18	0.000205273529771326\\
19	0.000163767450010063\\
20	0.000142248612213545\\
21	0.000122688920907017\\
22	7.10896705556709e-05\\
23	5.27216453967675e-05\\
24	3.38700780840638e-05\\
25	2.09058183677398e-05\\
26	1.49555222859407e-05\\
27	7.79485948498771e-06\\
28	6.02992484894262e-06\\
29	3.65026591979795e-06\\
30	2.62980236350988e-06\\
31	2.75807629903683e-06\\
32	1.51700890170284e-06\\
33	1.26660853717839e-06\\
34	1.02935830776426e-06\\
35	7.99115883766204e-07\\
};

\end{axis}
\end{tikzpicture}
      \raisebox{-.5\height}{
%
%
\definecolor{mycolor1}{rgb}{0.00000,0.44700,0.74100}%
\definecolor{mycolor2}{rgb}{0.85000,0.32500,0.09800}%
\definecolor{mycolor3}{rgb}{0.92900,0.69400,0.12500}%
\definecolor{mycolor4}{rgb}{0.49400,0.18400,0.55600}%
\begin{tikzpicture}[font=\scriptsize]

\begin{axis}[%
width=0.951\iwidth,
height=\iheight,
at={(0\iwidth,0\iheight)},
scale only axis,
xmin=0,
xmax=100,
ymode=log,
ymin=5e-07,
ymax=1e1,
yminorticks=true,
axis background/.style={fill=white},
]
\addplot [color=mycolor1,solid,thick]
  table[row sep=crcr]{%
1	1\\
2	1.47523280216187\\
3	2.10302047747563\\
4	1.97822031437013\\
5	1.24102713516894\\
6	0.90216618347651\\
7	1.34662332959807\\
8	1.04589163993352\\
9	0.955308897839724\\
10	0.633943306147675\\
11	0.665739154768581\\
12	0.875438171121919\\
13	0.729690707240543\\
14	0.574151689203049\\
15	0.586448138365804\\
16	0.498927824476833\\
17	0.457135258705028\\
18	0.536953988676805\\
19	0.403168006593748\\
20	0.402247318211792\\
21	0.369317492205928\\
22	0.3855430387017\\
23	0.357804288482376\\
24	0.266880214206933\\
25	0.619720269687064\\
26	0.244374773107902\\
27	0.194976671252617\\
28	0.229001872852775\\
29	0.24048511223859\\
30	0.232085144657041\\
31	0.233389443882535\\
32	0.198759351629024\\
33	0.268527959023326\\
34	0.213487234495242\\
35	0.20022315830416\\
36	0.401657496949406\\
37	0.251428507010365\\
38	0.185607375225014\\
39	0.350266575637682\\
40	0.239771646701332\\
41	0.456923898913075\\
42	0.212377093087603\\
43	0.315656832259173\\
44	0.267381419398389\\
45	0.205327392816184\\
46	0.303587139190361\\
47	0.239766424614404\\
48	0.198685793791366\\
49	0.391708624430857\\
50	0.213631241248531\\
51	0.253419392753049\\
52	0.356503902970883\\
53	0.210563231981988\\
54	0.251809479704953\\
55	0.251888977147634\\
56	0.194438630160291\\
57	0.458025459743163\\
58	0.2885171463688\\
59	0.229132877107692\\
60	0.257477480173379\\
61	0.227815862636282\\
62	0.253218836538762\\
63	0.245004654560221\\
64	0.179381038921349\\
65	0.30539636619868\\
66	0.160973064544655\\
67	0.174480576629173\\
68	0.143278252094646\\
69	0.308331295947518\\
70	0.182610338799483\\
71	0.211160815024378\\
72	0.14728195182551\\
73	0.309412143011949\\
74	0.133797701972373\\
75	0.22436147845376\\
76	0.126420741249883\\
77	0.158995549177534\\
78	0.161048243349086\\
79	0.203182014699494\\
80	0.183634856219017\\
81	0.133577730408953\\
82	0.133808481721134\\
83	0.206000628139345\\
84	0.13293612155962\\
85	0.216240486515226\\
86	0.146852712337272\\
87	0.201043973459691\\
88	0.143607181082557\\
89	0.146096426859026\\
90	0.251646247722585\\
91	0.200374901938527\\
92	0.173222008565281\\
93	0.130825633143417\\
94	0.122166179153158\\
95	0.200080659159642\\
96	0.13054290392232\\
97	0.143065890002016\\
98	0.222332201777883\\
99	0.183472198164582\\
100	0.177256720134511\\
101	0.155403514097008\\
};

\addplot [color=mycolor2,dashed,mark=*,mark options={solid},mark size=0.4mm,thick,mark repeat=5]
  table[row sep=crcr]{%
1	1\\
2	0.191339495326108\\
3	0.128768712303506\\
4	0.0961472003656994\\
5	0.106743837492255\\
6	0.101423274298228\\
7	0.0874388499282105\\
8	0.0689494289819565\\
9	0.0589159546740037\\
10	0.0634221792482374\\
11	0.0528928429866193\\
12	0.0392070627616446\\
13	0.0298545310196614\\
14	0.0214496932557434\\
15	0.0177564530780891\\
16	0.0114323247145368\\
17	0.00725888471936178\\
18	0.00591333474812438\\
19	0.00420770142327341\\
20	0.00362405264260163\\
21	0.00298808155410897\\
22	0.00303204667546862\\
23	0.00254962927657799\\
24	0.00284726384518058\\
25	0.00198861830680341\\
26	0.00176180653412741\\
27	0.00121603238787776\\
28	0.00108118022410545\\
29	0.000785468359389021\\
30	0.000666889275303922\\
31	0.000461425768192349\\
32	0.000380094325430877\\
33	0.000250929498133945\\
34	0.000207100983232622\\
35	0.000151606731900346\\
36	0.00014028518953479\\
37	0.00011117130899209\\
38	0.000101769123290632\\
39	6.83036763540633e-05\\
40	5.82726701959939e-05\\
41	3.84366719516986e-05\\
42	3.804870113614e-05\\
43	3.15157922864946e-05\\
44	3.30798797980977e-05\\
45	3.01582351662926e-05\\
46	2.7036032229103e-05\\
47	1.94432379865463e-05\\
48	1.39598440939192e-05\\
49	1.03176654838879e-05\\
50	7.25518332541337e-06\\
51	5.74631337605063e-06\\
52	4.68259327702163e-06\\
53	3.41779990278703e-06\\
54	2.21084673559372e-06\\
55	1.54291009397311e-06\\
56	1.38429255505116e-06\\
57	1.02291167357577e-06\\
58	8.16340581650909e-07\\
};

\addplot [color=mycolor3,dashed,mark=triangle*,mark options={solid},mark size=0.5mm,thick,mark repeat=5]
  table[row sep=crcr]{%
1	1\\
2	0.97706466143587\\
3	0.826624673062755\\
4	0.605514621330423\\
5	0.532776927478158\\
6	0.476841822924213\\
7	0.43191866467668\\
8	0.408298511854172\\
9	0.409135717347235\\
10	0.354455882287185\\
11	0.280688672712863\\
12	0.280168354271917\\
13	0.206135776318813\\
14	0.242606937609645\\
15	0.195761947359179\\
16	0.222248885238954\\
17	0.166832358211848\\
18	0.147289826202068\\
19	0.142446350102175\\
20	0.131369281899855\\
21	0.114206353409786\\
22	0.106758006450317\\
23	0.0788226301883782\\
24	0.0578127118215838\\
25	0.0511436938168118\\
26	0.0509152047528189\\
27	0.0376740556365538\\
28	0.0376649734571982\\
29	0.0274338135111563\\
30	0.0254842088410485\\
31	0.0237193525046561\\
32	0.0182602421739917\\
33	0.0141267578793552\\
34	0.0144346371086535\\
35	0.0126692341119327\\
36	0.0107424751610062\\
37	0.0083289681436138\\
38	0.00681371428288828\\
39	0.00602536664021669\\
40	0.00584169837833975\\
41	0.00470398978370181\\
42	0.00424522368511576\\
43	0.00375359207629544\\
44	0.00320535594705516\\
45	0.00372117978486788\\
46	0.00311867314212678\\
47	0.00256717429411609\\
48	0.00219723215870154\\
49	0.00189124977013808\\
50	0.00154113563676753\\
51	0.00125123857847535\\
52	0.00121473096099899\\
53	0.00115193160990784\\
54	0.00104009250267803\\
55	0.000881960031774239\\
56	0.000729762439275971\\
57	0.000697024148774252\\
58	0.000511727312934954\\
59	0.000439749498836669\\
60	0.000388767601381571\\
61	0.000334295983112147\\
62	0.00032131300706608\\
63	0.000253790638966987\\
64	0.000197743023243073\\
65	0.000148655568415788\\
66	0.000131475003539696\\
67	0.000112046662401163\\
68	0.000100129724420571\\
69	9.31919586215996e-05\\
70	8.61479534654535e-05\\
71	8.24602265615441e-05\\
72	6.73392222895757e-05\\
73	5.4855211229425e-05\\
74	5.39271003740548e-05\\
75	4.79804104654809e-05\\
76	4.81688472145402e-05\\
77	4.00484285931404e-05\\
78	3.65726504621073e-05\\
79	3.46282051722489e-05\\
80	3.15536443005534e-05\\
81	2.80895789319996e-05\\
82	2.58980460282149e-05\\
83	2.08910913869541e-05\\
84	1.51567996082308e-05\\
85	1.27593597252488e-05\\
86	1.02095129646668e-05\\
87	8.87521861796649e-06\\
88	6.57241650503327e-06\\
89	5.59816361604803e-06\\
90	4.52236525980662e-06\\
91	4.27183259637948e-06\\
92	3.42805744419671e-06\\
93	3.14546531513615e-06\\
94	2.62318579376521e-06\\
95	2.25800558990205e-06\\
96	1.77414415665198e-06\\
97	1.42883324318008e-06\\
98	1.30524395474198e-06\\
99	1.10263375008943e-06\\
100	9.92255378061859e-07\\
};

\addplot [color=mycolor4,dashed,mark=square*,mark options={solid},mark size=0.4mm,thick,mark repeat=5]
  table[row sep=crcr]{%
1	1\\
2	0.548041303931757\\
3	0.339190469439589\\
4	0.274028183864361\\
5	0.209302308863463\\
6	0.17675434319699\\
7	0.195339474610032\\
8	0.122106756883853\\
9	0.0888218862083532\\
10	0.0588124895106901\\
11	0.0287478297925016\\
12	0.0252662973608361\\
13	0.0167964662387283\\
14	0.00804115238975938\\
15	0.00746590410381806\\
16	0.00518715106120291\\
17	0.00391058155807117\\
18	0.00349610161103396\\
19	0.00157703974098417\\
20	0.00107391738186312\\
21	0.000757097079944607\\
22	0.000508929815682919\\
23	0.000320062617942934\\
24	0.00020806060618036\\
25	0.000164347547785967\\
26	7.8404572643067e-05\\
27	8.07873709076011e-05\\
28	6.11121154177645e-05\\
29	3.92154393412175e-05\\
30	2.41651467034569e-05\\
31	2.27877817890705e-05\\
32	1.10988917988269e-05\\
33	5.62412579524639e-06\\
34	3.86739770870281e-06\\
35	2.9750467309139e-06\\
36	1.38368164558192e-06\\
37	9.43204339145793e-07\\
};

\end{axis}
\end{tikzpicture}
      \raisebox{-.5\height}{
%
%
\definecolor{mycolor1}{rgb}{0.00000,0.44700,0.74100}%
\definecolor{mycolor2}{rgb}{0.85000,0.32500,0.09800}%
\definecolor{mycolor3}{rgb}{0.92900,0.69400,0.12500}%
\definecolor{mycolor4}{rgb}{0.49400,0.18400,0.55600}%
\begin{tikzpicture}

\begin{axis}[%
width=40mm,
height=35mm,
hide axis,
xmin = 0,
xmax = 1,
ymin = 0,
ymax = 1,
enlargelimits=false,
legend style={legend cell align=left,align=left,draw=white!15!black}
]
\addlegendimage{mycolor1,thick};
\addlegendentry{no preconditioner};
\addlegendimage{mycolor2,dashed,mark=*,mark options={solid},thick};
\addlegendentry{$\bfP_{\textrm{block}}$};
\addlegendimage{mycolor3,dashed,mark=triangle*,mark options={solid},thick};
\addlegendentry{$\bfP_{\textrm{Jac}}$};
\addlegendimage{mycolor4,dashed,mark=square*,mark options={solid},thick};
\addlegendentry{$\bfP_{\textrm{SGS}}$};

\end{axis}
\end{tikzpicture}
      & \multicolumn{2}{c}{PCG-iteration} &
    \end{tabular}
   \caption{PCG performance for different preconditioning techniques in
    the first and final iteration of the coarse level of the 2D example also
    shown in \cref{fig:multilevel}, where $\alpha=200$, $\beta=10$.}
    \label{fig:preconditioners}
\end{figure}
\par
Finally, \cref{fig:2Dbrain_pcg} shows the condition number of the
preconditioned approximate Hessian and the number of PCG-iterations
required  to achieve a relative residual tolerance of $10^{-1}$ in each Gauss-Newton iteration. 
Results for each level of the multilevel scheme with grid sizes of $32\times 32$, $64\times 64$, and $128\times 128$ are shown. 
The penalty parameter is fixed at $\beta=10$ but we consider two
settings of the smoothness regularizer:  $\alpha=200$ and $\alpha=2$.
It can be seen that the block-Jacobi preconditioner outperforms the Jacobi
preconditioner, while having only slightly higher computational cost per
iteration.
The total runtimes for the
three-level optimization for the example with $\alpha=200$ were 1.0~s without preconditioner,
0.7~s with the block-Jacobi, and 0.9~s with both
the Jacobi and symmetric Gauss-Seidel preconditioners. In the case of
$\alpha=2$ the runtimes remain almost unchanged for all schemes except for the Jacobi preconditioner, where the runtime
increases by a factor of about $1.8$. This increase is due to the larger number of outer iterations.
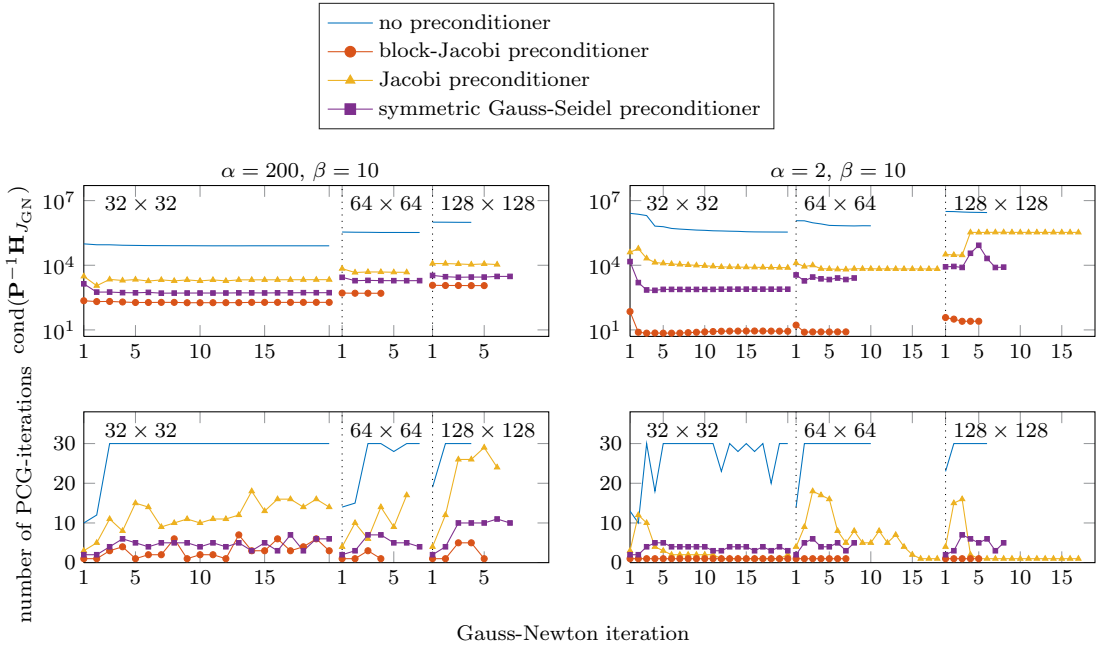
\begin{figure}[t]
	\scriptsize
  \setlength\iwidth{65mm}
  \setlength\iheight{20mm}
  \centering
%
%
\definecolor{mycolor1}{rgb}{0.00000,0.44700,0.74100}%
\definecolor{mycolor2}{rgb}{0.85000,0.32500,0.09800}%
\definecolor{mycolor3}{rgb}{0.92900,0.69400,0.12500}%
\definecolor{mycolor4}{rgb}{0.49400,0.18400,0.55600}%
\begin{tikzpicture}

\begin{axis}[%
width=70mm,
height=35mm,
hide axis,
xmin = 0,
xmax = 1,
ymin = 0,
ymax = 1,
enlargelimits=false,
legend style={legend cell align=left,align=left,draw=white!15!black}
]
\addlegendimage{mycolor1};
\addlegendentry{no preconditioner};
\addlegendimage{mycolor2,mark=*};
\addlegendentry{block-Jacobi preconditioner};
\addlegendimage{mycolor3,mark=triangle*};
\addlegendentry{Jacobi preconditioner};
\addlegendimage{mycolor4,mark=square*};
\addlegendentry{symmetric Gauss-Seidel preconditioner};

\end{axis}
\end{tikzpicture}%
  \vspace{0.5em}\\
  \begin{tabular}{r@{\,}rr}
    & \multicolumn{1}{c}{$\alpha=200$, $\beta=10$} &
    \multicolumn{1}{c}{$\alpha=2$, $\beta=10$} \\
      \rotatebox[origin=c]{90}{${\rm cond}(\bfP^{-1} \bfH_{J_{\rm
      GN}})$} &
      \raisebox{-.5\height}{
%
%
\definecolor{mycolor1}{rgb}{0.00000,0.44700,0.74100}%
\definecolor{mycolor2}{rgb}{0.85000,0.32500,0.09800}%
\definecolor{mycolor3}{rgb}{0.92900,0.69400,0.12500}%
\definecolor{mycolor4}{rgb}{0.49400,0.18400,0.55600}%
\begin{tikzpicture}

\begin{axis}[%
width=0.951\iwidth,
height=\iheight,
at={(0\iwidth,0\iheight)},
scale only axis,
xmin=1,
xmax=37,
xtick={1,5,10,15,20,21,25,28,32},
xticklabels={{1},{5},{10},{15},{},{1},{5},{1},{5}},
ymode=log,
ymin=5,
ymax=5e7,
yminorticks=true,
axis background/.style={fill=white}
]
\addplot [color=mycolor1,solid,forget plot]
  table[row sep=crcr]{%
1	98340.6312977753\\
2	89365.5852655373\\
3	89153.1743718882\\
4	84853.2677118175\\
5	83244.9497479039\\
6	81795.0751394697\\
7	81512.7388156971\\
8	80955.4454279223\\
9	80897.7610708792\\
10	80293.2345500504\\
11	79845.3838435881\\
12	79947.8199518265\\
13	79745.0679393343\\
14	80142.7495211517\\
15	80099.3961010324\\
16	80037.4254415979\\
17	80016.4044585685\\
18	79951.9874532111\\
19	79929.9953274759\\
20	79930.0819002271\\
};
\addplot [color=mycolor1,solid,forget plot]
  table[row sep=crcr]{%
21	348359.846083293\\
22	341088.053231024\\
23	338338.356187342\\
24	332167.604362036\\
25	332445.928461435\\
26	331656.455246946\\
27	331468.240157573\\
};
\addplot [color=mycolor1,solid,forget plot]
  table[row sep=crcr]{%
28	1000835.79901672\\
29	994861.378558143\\
30	983111.622869884\\
31	980618.520489869\\
};
\addplot [color=mycolor3,solid,mark=triangle*,mark options={solid},mark size=0.4mm,forget plot]
  table[row sep=crcr]{%
1	3055.06162427761\\
2	1124.24192533111\\
3	2205.49587689738\\
4	1969.42138764731\\
5	2174.27022109739\\
6	1847.8772537224\\
7	2092.92353926836\\
8	1883.7707635634\\
9	2093.82726838412\\
10	1880.71147300238\\
11	2091.59319891351\\
12	1881.46773715546\\
13	2099.43762203758\\
14	2102.35435124011\\
15	2110.20052750564\\
16	2117.17220017486\\
17	2125.03305831215\\
18	2132.96942911883\\
19	2137.94693783161\\
20	2141.83122948983\\
};
\addplot [color=mycolor3,solid,mark=triangle*,mark options={solid},mark size=0.4mm,forget plot]
  table[row sep=crcr]{%
21	6972.99284426866\\
22	4608.62023406197\\
23	4891.47460642333\\
24	4864.5703228495\\
25	4732.62245762231\\
26	4720.15528170529\\
};
\addplot [color=mycolor3,solid,mark=triangle*,mark options={solid},mark size=0.4mm,forget plot]
  table[row sep=crcr]{%
28	11966.3355237299\\
29	11971.5334046367\\
30	11503.4934562598\\
31	10816.3453757107\\
32	11563.8650519377\\
33	10796.7034056671\\
};
\addplot [color=mycolor2,solid,mark=*,mark options={solid},mark size=0.4mm,forget plot]
  table[row sep=crcr]{%
1	225.350914554379\\
2	206.635817749913\\
3	209.259822942341\\
4	197.465768164618\\
5	187.044638581395\\
6	187.518348268317\\
7	189.674498726518\\
8	189.527076594838\\
9	182.439501628308\\
10	182.65248984951\\
11	183.316204829323\\
12	183.921644848724\\
13	184.149267264644\\
14	189.434668892707\\
15	189.4485728703\\
16	189.423196595548\\
17	189.553684577425\\
18	189.640387633472\\
19	189.78760845512\\
20	190.058601357495\\
};
\addplot [color=mycolor2,solid,mark=*,mark options={solid},mark size=0.4mm,forget plot]
  table[row sep=crcr]{%
21	513.184150102365\\
22	503.24659633501\\
23	502.729850020382\\
24	500.781290404802\\
};
\addplot [color=mycolor2,solid,mark=*,mark options={solid},mark size=0.4mm,forget plot]
  table[row sep=crcr]{%
28	1176.17471219998\\
29	1151.89956836628\\
30	1150.66640402836\\
31	1137.04075065594\\
32	1136.57478197295\\
};
\addplot [color=mycolor4,solid,mark=square*,mark options={solid},mark size=0.3mm,forget plot]
  table[row sep=crcr]{%
1	1375.22464811484\\
2	567.577646412147\\
3	570.719860494603\\
4	536.165213830373\\
5	526.884112459779\\
6	550.969691751955\\
7	507.239256924583\\
8	511.14992041131\\
9	514.822278318472\\
10	514.712142964384\\
11	514.675399698064\\
12	515.586094139998\\
13	523.538749811762\\
14	523.372737072976\\
15	518.268229002964\\
16	524.785529017338\\
17	525.074288731577\\
18	534.111482040709\\
19	531.47492009616\\
20	533.395090870851\\
};
\addplot [color=mycolor4,solid,mark=square*,mark options={solid},mark size=0.3mm,forget plot]
  table[row sep=crcr]{%
21	2760.79557353572\\
22	1922.97380236406\\
23	1988.65256483238\\
24	1938.39724903281\\
25	1926.88069418609\\
26	1920.5458070595\\
27	1933.20131175864\\
};
\addplot [color=mycolor4,solid,mark=square*,mark options={solid},mark size=0.3mm,forget plot]
  table[row sep=crcr]{%
28	3366.17051008522\\
29	2935.9969543814\\
30	2775.68458913924\\
31	2831.9205697375\\
32	2823.78887066076\\
33	3034.40726647612\\
34	3041.04962605712\\
};
\addplot [color=black,dotted,forget plot]
  table[row sep=crcr]{%
21	5\\
21	5e7\\
};
\addplot [color=black,dotted,forget plot]
  table[row sep=crcr]{%
28	5\\
28	5e7\\
};
\node[right, align=left, text=black]
at (axis cs:2,7e6) {$32\times32$};
\node[right, align=left, text=black]
at (axis cs:21,7e6) {$64\times64$};
\node[right, align=left, text=black]
at (axis cs:28,7e6) {$128\times128$};
\end{axis}
\end{tikzpicture}
      \raisebox{-.5\height}{
%
%
\definecolor{mycolor1}{rgb}{0.00000,0.44700,0.74100}%
\definecolor{mycolor2}{rgb}{0.85000,0.32500,0.09800}%
\definecolor{mycolor3}{rgb}{0.92900,0.69400,0.12500}%
\definecolor{mycolor4}{rgb}{0.49400,0.18400,0.55600}%
\begin{tikzpicture}

\begin{axis}[%
width=0.951\iwidth,
height=\iheight,
at={(0\iwidth,0\iheight)},
scale only axis,
xmin=1,
xmax=57,
xtick={1,5,10,15,20,21,25,30,35,39,43,48,53},
xticklabels={{1},{5},{10},{15},{},{1},{5},{10},{15},{1},{5},{10},{15}},
ymode=log,
ymin=5,
ymax=5e7,
yminorticks=true,
axis background/.style={fill=white}
]
\addplot [color=mycolor1,solid,forget plot]
  table[row sep=crcr]{%
1	2563225.26052441\\
2	2331163.06786878\\
3	2020608.83678517\\
4	654263.783205068\\
5	616074.090753123\\
6	511497.920326548\\
7	477235.918922358\\
8	453146.481813983\\
9	429531.529879368\\
10	417226.764663647\\
11	397911.159538466\\
12	391044.733630783\\
13	381129.170268131\\
14	374532.243120336\\
15	360127.750735019\\
16	353275.336161657\\
17	350672.028172136\\
18	348881.046820435\\
19	347503.143339383\\
20	346038.231177164\\
};
\addplot [color=mycolor1,solid,forget plot]
  table[row sep=crcr]{%
21	1167478.13840527\\
22	1155420.6705293\\
23	935831.733523898\\
24	828024.307452449\\
25	708257.230130019\\
26	691118.358998402\\
27	678055.434753485\\
28	667871.362609575\\
29	682666.542743814\\
30	678805.58098166\\
};
\addplot [color=mycolor1,solid,forget plot]
  table[row sep=crcr]{%
39	3131907.10948529\\
40	3095870.47370291\\
41	2958744.34286634\\
42	2870741.79654976\\
43	2834765.14473525\\
44	2789713.55407592\\
};
\addplot [color=mycolor3,solid,mark=triangle*,mark options={solid},mark size=0.4mm,forget plot]
  table[row sep=crcr]{%
1	40082.5344739563\\
2	59139.9152566619\\
3	20840.8813405072\\
4	13395.8861567695\\
5	12328.3025164955\\
6	11476.7330944684\\
7	10916.8657244348\\
8	10374.5857582548\\
9	9925.04332330195\\
10	9492.74297676161\\
11	8907.96942825014\\
12	8349.58069036379\\
13	8274.40986655516\\
14	8199.07319482932\\
15	8121.98065826125\\
16	8042.65888005439\\
17	7961.29214390958\\
18	7876.76103017554\\
19	7790.35407592655\\
20	7701.31026640422\\
};
\addplot [color=mycolor3,solid,mark=triangle*,mark options={solid},mark size=0.4mm,forget plot]
  table[row sep=crcr]{%
21	12630.6183659624\\
22	8745.61286040032\\
23	10008.2052941471\\
24	6889.62441507316\\
25	6789.22168271299\\
26	6503.61784653331\\
27	6393.8810763142\\
28	6906.25449626804\\
29	6880.48381534515\\
30	6887.01978704456\\
31	6796.2131605416\\
32	6779.51376933856\\
33	6799.02510462655\\
34	6755.77563699451\\
35	6759.85435277472\\
36	6743.13250035088\\
37	6747.98891945422\\
38	6750.06614519422\\
};
\addplot [color=mycolor3,solid,mark=triangle*,mark options={solid},mark size=0.4mm,forget plot]
  table[row sep=crcr]{%
39	31783.4913826325\\
40	30074.1004904267\\
41	29510.0547144222\\
42	335847.078307298\\
43	332921.114343866\\
44	335761.903836043\\
45	337870.156135608\\
46	339251.679675562\\
47	335393.760456969\\
48	335449.279735347\\
49	335214.531550748\\
50	335110.306038186\\
51	334987.84385943\\
52	334954.405377922\\
53	334937.23089071\\
54	334935.084218299\\
55	334932.899021243\\
};
\addplot [color=mycolor2,solid,mark=*,mark options={solid},mark size=0.4mm,forget plot]
  table[row sep=crcr]{%
1	70.9167991685705\\
2	7.78180223299617\\
3	7.07777017953901\\
4	7.19192575653548\\
5	7.06676796703401\\
6	7.07474067481712\\
7	7.12726872482015\\
8	7.45870415087379\\
9	7.77120668613082\\
10	8.07154472838908\\
11	8.40219007787537\\
12	8.66438403151116\\
13	8.77324332908073\\
14	8.82826968270558\\
15	8.87140177043888\\
16	8.90595726101649\\
17	8.87186446111567\\
18	8.78404270373374\\
19	8.72584941926861\\
20	8.65997101260326\\
};
\addplot [color=mycolor2,solid,mark=*,mark options={solid},mark size=0.4mm,forget plot]
  table[row sep=crcr]{%
21	16.6611914604651\\
22	7.84673313337444\\
23	8.05460309286397\\
24	8.06591603444064\\
25	8.07427982082185\\
26	8.09065989152665\\
27	8.09695022182909\\
};
\addplot [color=mycolor2,solid,mark=*,mark options={solid},mark size=0.4mm,forget plot]
  table[row sep=crcr]{%
39	37.415595435646\\
40	31.6741402761548\\
41	25.1110909892754\\
42	25.1891085862564\\
43	25.1902702721988\\
};
\addplot [color=mycolor4,solid,mark=square*,mark options={solid},mark size=0.3mm,forget plot]
  table[row sep=crcr]{%
1	14781.7079217532\\
2	1568.01008637856\\
3	714.769253031661\\
4	685.483782684131\\
5	760.100107158291\\
6	760.555500127673\\
7	758.279321407105\\
8	757.550893686061\\
9	757.825008336682\\
10	758.125515142309\\
11	758.916458311068\\
12	772.533872045642\\
13	774.55262868826\\
14	776.260794991341\\
15	777.096167010449\\
16	777.723771510108\\
17	777.89979201249\\
18	775.497472502975\\
19	774.806236083949\\
20	773.887295930177\\
};
\addplot [color=mycolor4,solid,mark=square*,mark options={solid},mark size=0.3mm,forget plot]
  table[row sep=crcr]{%
21	3558.48381087243\\
22	1889.12125347022\\
23	2837.11088135001\\
24	2348.82453591456\\
25	2173.66861001808\\
26	2558.04299173205\\
27	2180.52205027992\\
28	2564.74407903732\\
};
\addplot [color=mycolor4,solid,mark=square*,mark options={solid},mark size=0.3mm,forget plot]
  table[row sep=crcr]{%
39	8485.46026186219\\
40	8796.52573948418\\
41	7926.69953957378\\
42	36041.9415622757\\
43	84448.9765020785\\
44	20703.40385303\\
45	7888.73922199219\\
46	8182.73287346472\\
};
\addplot [color=black,dotted,forget plot]
  table[row sep=crcr]{%
21	5\\
21	5e7\\
};
\addplot [color=black,dotted,forget plot]
  table[row sep=crcr]{%
39	5\\
39	5e7\\
};
\node[right, align=left, text=black]
at (axis cs:2,7e6) {$32\times32$};
\node[right, align=left, text=black]
at (axis cs:21,7e6) {$64\times64$};
\node[right, align=left, text=black]
at (axis cs:39,7e6) {$128\times128$};
\end{axis}
\end{tikzpicture}
      \rotatebox[origin=c]{90}{number of PCG-iterations} &
      \raisebox{-.5\height}{
%
%
\definecolor{mycolor1}{rgb}{0.00000,0.44700,0.74100}%
\definecolor{mycolor2}{rgb}{0.85000,0.32500,0.09800}%
\definecolor{mycolor3}{rgb}{0.92900,0.69400,0.12500}%
\definecolor{mycolor4}{rgb}{0.49400,0.18400,0.55600}%
\begin{tikzpicture}

\begin{axis}[%
width=0.951\iwidth,
height=\iheight,
at={(0\iwidth,0\iheight)},
scale only axis,
xmin=1,
xmax=37,
xtick={1,5,10,15,20,21,25,28,32},
xticklabels={{1},{5},{10},{15},{},{1},{5},{1},{5}},
ymin=0,
ymax=38,
axis background/.style={fill=white}
]
\addplot [color=mycolor1,solid,forget plot]
  table[row sep=crcr]{%
1	10\\
2	12\\
3	30\\
4	30\\
5	30\\
6	30\\
7	30\\
8	30\\
9	30\\
10	30\\
11	30\\
12	30\\
13	30\\
14	30\\
15	30\\
16	30\\
17	30\\
18	30\\
19	30\\
20	30\\
};
\addplot [color=mycolor1,solid,forget plot]
  table[row sep=crcr]{%
21	14\\
22	15\\
23	30\\
24	30\\
25	28\\
26	30\\
27	30\\
};
\addplot [color=mycolor1,solid,forget plot]
  table[row sep=crcr]{%
28	19\\
29	30\\
30	30\\
31	30\\
};
\addplot [color=mycolor3,solid,mark=triangle*,mark options={solid},mark size=0.4mm,forget plot]
  table[row sep=crcr]{%
1	3\\
2	5\\
3	11\\
4	8\\
5	15\\
6	14\\
7	9\\
8	10\\
9	11\\
10	10\\
11	11\\
12	11\\
13	12\\
14	18\\
15	13\\
16	16\\
17	16\\
18	14\\
19	16\\
20	14\\
};
\addplot [color=mycolor3,solid,mark=triangle*,mark options={solid},mark size=0.4mm,forget plot]
  table[row sep=crcr]{%
21	4\\
22	10\\
23	6\\
24	14\\
25	9\\
26	17\\
};
\addplot [color=mycolor3,solid,mark=triangle*,mark options={solid},mark size=0.4mm,forget plot]
  table[row sep=crcr]{%
28	4\\
29	12\\
30	26\\
31	26\\
32	29\\
33	24\\
};
\addplot [color=mycolor2,solid,mark=*,mark options={solid},mark size=0.4mm,forget plot]
  table[row sep=crcr]{%
1	1\\
2	1\\
3	3\\
4	4\\
5	1\\
6	2\\
7	2\\
8	6\\
9	1\\
10	2\\
11	2\\
12	1\\
13	7\\
14	3\\
15	3\\
16	6\\
17	3\\
18	4\\
19	6\\
20	3\\
};
\addplot [color=mycolor2,solid,mark=*,mark options={solid},mark size=0.4mm,forget plot]
  table[row sep=crcr]{%
21	1\\
22	1\\
23	3\\
24	1\\
};
\addplot [color=mycolor2,solid,mark=*,mark options={solid},mark size=0.4mm,forget plot]
  table[row sep=crcr]{%
28	1\\
29	1\\
30	5\\
31	5\\
32	1\\
};
\addplot [color=mycolor4,solid,mark=square*,mark options={solid},mark size=0.3mm,forget plot]
  table[row sep=crcr]{%
1	2\\
2	2\\
3	4\\
4	6\\
5	5\\
6	4\\
7	5\\
8	5\\
9	5\\
10	4\\
11	5\\
12	4\\
13	5\\
14	3\\
15	5\\
16	3\\
17	7\\
18	3\\
19	6\\
20	6\\
};
\addplot [color=mycolor4,solid,mark=square*,mark options={solid},mark size=0.3mm,forget plot]
  table[row sep=crcr]{%
21	2\\
22	3\\
23	7\\
24	7\\
25	5\\
26	5\\
27	4\\
};
\addplot [color=mycolor4,solid,mark=square*,mark options={solid},mark size=0.3mm,forget plot]
  table[row sep=crcr]{%
28	2\\
29	4\\
30	10\\
31	10\\
32	10\\
33	11\\
34	10\\
};
\addplot [color=black,dotted,forget plot]
  table[row sep=crcr]{%
21	0\\
21	38\\
};
\addplot [color=black,dotted,forget plot]
  table[row sep=crcr]{%
28	0\\
28	38\\
};
\node[right, align=left, text=black]
at (axis cs:2,33) {$32\times32$};
\node[right, align=left, text=black]
at (axis cs:21,33) {$64\times64$};
\node[right, align=left, text=black]
at (axis cs:28,33) {$128\times128$};
\end{axis}
\end{tikzpicture}
      \raisebox{-.5\height}{
%
%
\definecolor{mycolor1}{rgb}{0.00000,0.44700,0.74100}%
\definecolor{mycolor2}{rgb}{0.85000,0.32500,0.09800}%
\definecolor{mycolor3}{rgb}{0.92900,0.69400,0.12500}%
\definecolor{mycolor4}{rgb}{0.49400,0.18400,0.55600}%
\begin{tikzpicture}

\begin{axis}[%
width=0.951\iwidth,
height=\iheight,
at={(0\iwidth,0\iheight)},
scale only axis,
xmin=1,
xmax=57,
xtick={1,5,10,15,20,21,25,30,35,39,43,48,53},
xticklabels={{1},{5},{10},{15},{},{1},{5},{10},{15},{1},{5},{10},{15}},
ymin=0,
ymax=38,
axis background/.style={fill=white}
]
\addplot [color=mycolor1,solid,forget plot]
  table[row sep=crcr]{%
1	13\\
2	10\\
3	30\\
4	18\\
5	30\\
6	30\\
7	30\\
8	30\\
9	30\\
10	30\\
11	30\\
12	23\\
13	30\\
14	28\\
15	30\\
16	28\\
17	30\\
18	20\\
19	30\\
20	30\\
};
\addplot [color=mycolor1,solid,forget plot]
  table[row sep=crcr]{%
21	14\\
22	30\\
23	30\\
24	30\\
25	30\\
26	30\\
27	30\\
28	30\\
29	30\\
30	30\\
};
\addplot [color=mycolor1,solid,forget plot]
  table[row sep=crcr]{%
39	23\\
40	30\\
41	30\\
42	30\\
43	30\\
44	30\\
};
\addplot [color=mycolor3,solid,mark=triangle*,mark options={solid},mark size=0.4mm,forget plot]
  table[row sep=crcr]{%
1	3\\
2	12\\
3	10\\
4	4\\
5	3\\
6	2\\
7	2\\
8	2\\
9	2\\
10	2\\
11	2\\
12	1\\
13	1\\
14	1\\
15	1\\
16	1\\
17	1\\
18	1\\
19	1\\
20	2\\
};
\addplot [color=mycolor3,solid,mark=triangle*,mark options={solid},mark size=0.4mm,forget plot]
  table[row sep=crcr]{%
21	4\\
22	9\\
23	18\\
24	17\\
25	16\\
26	8\\
27	5\\
28	8\\
29	5\\
30	5\\
31	8\\
32	5\\
33	7\\
34	4\\
35	2\\
36	1\\
37	1\\
38	1\\
};
\addplot [color=mycolor3,solid,mark=triangle*,mark options={solid},mark size=0.4mm,forget plot]
  table[row sep=crcr]{%
39	4\\
40	15\\
41	16\\
42	2\\
43	1\\
44	1\\
45	1\\
46	1\\
47	1\\
48	1\\
49	1\\
50	1\\
51	1\\
52	1\\
53	1\\
54	1\\
55	1\\
};
\addplot [color=mycolor2,solid,mark=*,mark options={solid},mark size=0.4mm,forget plot]
  table[row sep=crcr]{%
1	1\\
2	1\\
3	1\\
4	1\\
5	1\\
6	1\\
7	1\\
8	1\\
9	1\\
10	1\\
11	1\\
12	1\\
13	1\\
14	1\\
15	1\\
16	1\\
17	1\\
18	1\\
19	1\\
20	1\\
};
\addplot [color=mycolor2,solid,mark=*,mark options={solid},mark size=0.4mm,forget plot]
  table[row sep=crcr]{%
21	1\\
22	1\\
23	1\\
24	1\\
25	1\\
26	1\\
27	1\\
};
\addplot [color=mycolor2,solid,mark=*,mark options={solid},mark size=0.4mm,forget plot]
  table[row sep=crcr]{%
39	1\\
40	1\\
41	1\\
42	1\\
43	1\\
};
\addplot [color=mycolor4,solid,mark=square*,mark options={solid},mark size=0.3mm,forget plot]
  table[row sep=crcr]{%
1	2\\
2	2\\
3	4\\
4	5\\
5	5\\
6	4\\
7	4\\
8	4\\
9	4\\
10	4\\
11	3\\
12	3\\
13	4\\
14	4\\
15	4\\
16	3\\
17	4\\
18	3\\
19	4\\
20	3\\
};
\addplot [color=mycolor4,solid,mark=square*,mark options={solid},mark size=0.3mm,forget plot]
  table[row sep=crcr]{%
21	2\\
22	5\\
23	6\\
24	4\\
25	4\\
26	5\\
27	3\\
28	5\\
};
\addplot [color=mycolor4,solid,mark=square*,mark options={solid},mark size=0.3mm,forget plot]
  table[row sep=crcr]{%
39	2\\
40	3\\
41	7\\
42	6\\
43	5\\
44	6\\
45	3\\
46	5\\
};
\addplot [color=black,dotted,forget plot]
  table[row sep=crcr]{%
21	1\\
21	38\\
};
\addplot [color=black,dotted,forget plot]
  table[row sep=crcr]{%
39	1\\
39	38\\
};
\node[right, align=left, text=black]
at (axis cs:2,33) {$32\times32$};
\node[right, align=left, text=black]
at (axis cs:21,33) {$64\times64$};
\node[right, align=left, text=black]
at (axis cs:39,33) {$128\times128$};
\end{axis}
\end{tikzpicture}
      & \multicolumn{2}{c}{Gauss-Newton iteration}
    \end{tabular}
  \caption{Condition number of preconditioned Hessian~\cref{eq:dJGN} and
  number of required PCG-iterations to achieve a relative residual
  tolerance of $10^{-1}$ for different options of preconditioners described
  in \cref{sub:gauss_newton_pcg}. Results are shown for different
  discretization levels (divided by vertical dotted lines) using the 2D
  test data also shown in \cref{fig:multilevel} with $\alpha=200$,
  $\beta=10$ and $\alpha=2$, $\beta=10$ respectively.}
  \label{fig:2Dbrain_pcg}
\end{figure}


\paragraph{Performance Of Preconditioners In 3D} 
\label{par:3d_example}
We compare the performance of the different preconditioners for the 3D
data set described above using $\alpha=50$ and $\beta=10$.
Detailed convergence results are provided in \cref{tab:3Dbrain_pcg}.  
We show the decrease of the objective function value, the norm of the
gradient and the number of PCG-iterations
needed to solve~\cref{eq:GNstep} at each Gauss-Newton iteration.
Here, the Gauss-Newton iterations $k=-1$ and
$k=0$ correspond to $\bfb=0$ and the initial guess
obtained by prolongation from the previous level, respectively.
As in the previous example, the block-Jacobi preconditioner outperforms both
the Jacobi and the symmetric Gauss-Seidel preconditioner. In many
Gauss-Newton iterations only one or two PCG-iterations are needed. Furthermore the
savings in computational cost per PCG-iteration when using the
block-Jacobi instead of the symmetric Gauss-Seidel preconditioner are
even more substantial in the 3D case. 
On the finest level, neither of the three methods satisfies the first stopping criterion~\eqref{eq:stopping1} after 10 iterations, which is the default setting in HySCO~\cite{RuthottoEtAl2013hysco}. In our experiments we found that increasing the number of iterations does not lead to considerable improvements of reconstruction quality.
In total the block-Jacobi scheme
takes 142~s, the Jacobi scheme takes 233~s, and the
symmetric Gauss-Seidel scheme takes 316~s for the three-level
optimization.
\begin{table}[t]
  \caption{Convergence results for the 3D correction problem with
  $\alpha =50$ and $\beta=10$ for the different preconditioning strategies
  described in \cref{sub:gauss_newton_pcg}. For each Gauss-Newton-iteration, we show
  the objective function value, the gradient norm,
  the number of PCG-iterations, and the relative residual of the PCG-solve. Empty rows indicate that the respective method has already converged with respect to the prescribed tolerances. On the finest level, all methods reach the maximum number of iterations without achieving the first condition in~\eqref{eq:stopping1}.}
  \label{tab:3Dbrain_pcg}
  \centering
  \scriptsize
\begin{tabular}{@{}|@{\,\,}c@{\,\,}|@{\,}c@{\,}|@{\,\,}rrrr@{\,\,}|@{\,\,}rrrr@{\,\,}|@{\,\,}rrrr@{\,\,}|@{}}
\hline
& GN
&\multicolumn{4}{c@{\,\,}|@{\,\,}}{Jacobi}
&\multicolumn{4}{c@{\,\,}|@{\,\,}}{symmetric Gauss-Seidel} 
&\multicolumn{4}{c@{\,\,}|@{}}{block-Jacobi}
\\

& iter
& \multicolumn{1}{@{\,\,}c}{$J_{\rm GN}$} & \multicolumn{1}{c}{$\|\nabla J_{\rm GN}\|$} & \multicolumn{1}{c}{iter} & \multicolumn{1}{c@{\,\,}|@{\,\,}}{rel. res.}
& \multicolumn{1}{@{\,\,}c}{$J_{\rm GN}$} & \multicolumn{1}{c}{$\|\nabla J_{\rm GN}\|$} & \multicolumn{1}{c}{iter} & \multicolumn{1}{c@{\,\,}|@{\,\,}}{rel. res.}
& \multicolumn{1}{@{\,\,}c}{$J_{\rm GN}$} & \multicolumn{1}{c}{$\|\nabla J_{\rm GN}\|$} & \multicolumn{1}{c}{iter} & \multicolumn{1}{c@{\,\,}|@{}}{rel. res.}
\\ \hline\hline

\multirow{12}{*}{\rotatebox[origin=center]{90}{$50\times50\times33$}} 
& -1 & 1.02e8 & --     & -- & --      & 1.02e8 & --     & -- & --      & 1.02e8 & --     & -- & --      \\
&  0 & 1.02e8 & 3.49e6 & -- & --      & 1.02e8 & 3.49e6 & -- & --      & 1.02e8 & 3.49e6 & -- & --      \\
&  1 & 3.48e7 & 1.59e6 &  4 & 8.53e-2 & 3.21e7 & 1.53e6 &  2 & 8.72e-2 & 3.04e7 & 1.46e6 &  1 & 3.81e-2 \\
&  2 & 1.61e7 & 7.84e5 &  7 & 7.88e-2 & 1.47e7 & 7.48e5 &  3 & 8.81e-2 & 1.58e7 & 6.55e5 &  1 & 9.32e-2 \\
&  3 & 1.00e7 & 4.08e5 &  9 & 8.68e-2 & 9.30e6 & 3.87e5 &  4 & 7.69e-2 & 1.03e7 & 3.62e5 &  2 & 5.93e-2 \\
&  4 & 7.92e6 & 2.21e5 & 10 & 9.37e-2 & 7.60e6 & 2.05e5 &  4 & 9.59e-2 & 8.32e6 & 1.96e5 &  2 & 7.76e-2 \\
&  5 & 6.96e6 & 1.24e5 & 11 & 9.36e-2 & 6.89e6 & 1.13e5 &  5 & 7.06e-2 & 7.43e6 & 2.30e5 &  3 & 6.68e-2 \\
&  6 & 6.84e6 & 8.55e4 & 10 & 9.77e-2 & 6.72e6 & 4.47e4 &  4 & 9.13e-2 & 6.98e6 & 1.01e5 &  1 & 4.05e-2 \\
&  7 & 6.67e6 & 2.28e4 &  7 & 9.82e-2 & 6.67e6 & 4.26e4 &  6 & 8.21e-2 & 6.90e6 & 4.48e4 &  1 & 6.57e-2 \\
&  8 & 6.65e6 & 2.85e4 & 19 & 9.39e-2 & 6.66e6 & 3.52e4 &  4 & 7.96e-2 & 6.73e6 & 3.82e4 &  5 & 7.27e-2 \\
&  9 & 6.64e6 & 2.17e4 & 16 & 9.73e-2 & 6.65e6 & 2.86e4 &  3 & 9.04e-2 & 6.72e6 & 3.13e4 &  1 & 8.80e-2 \\
& 10 & 6.63e6 & 1.89e4 & 15 & 9.62e-2 & 6.64e6 & 2.23e4 &  4 & 7.58e-2 & 6.72e6 & 4.33e4 &  2 & 9.84e-2 \\
\hline\hline

\multirow{12}{*}{\rotatebox[origin=center]{90}{$100\times100\times66$}} 
& -1 & 2.09e8 & --     & -- & --      & 2.09e8 & --     & -- & --      & 2.09e8 & --     & -- & --      \\
&  0 & 5.20e7 & 1.94e6 & -- & --      & 5.20e7 & 1.95e6 & -- & --      & 5.22e7 & 1.95e6 & -- & --      \\
&  1 & 2.44e7 & 9.96e5 &  4 & 9.63e-2 & 2.35e7 & 9.95e5 &  2 & 6.94e-2 & 2.31e7 & 9.57e5 &  1 & 2.24e-2 \\
&  2 & 1.65e7 & 5.08e5 &  6 & 8.68e-2 & 1.93e7 & 7.53e5 &  2 & 9.07e-2 & 1.61e7 & 4.68e5 &  1 & 3.48e-2 \\
&  3 & 1.42e7 & 2.62e5 &  8 & 8.50e-2 & 1.49e7 & 3.85e5 &  3 & 6.41e-2 & 1.42e7 & 2.25e5 &  1 & 5.82e-2 \\
&  4 & 1.36e7 & 1.43e5 & 10 & 8.92e-2 & 1.42e7 & 2.93e5 &  3 & 8.02e-2 & 1.36e7 & 4.04e5 &  1 & 9.64e-2 \\
&  5 & 1.33e7 & 8.57e4 & 12 & 9.11e-2 & 1.35e7 & 1.54e5 &  3 & 8.92e-2 & 1.35e7 & 2.57e5 &  1 & 2.31e-2 \\
&  6 & 1.32e7 & 6.76e4 & 13 & 9.74e-2 & 1.35e7 & 1.34e5 &  4 & 9.94e-2 & 1.34e7 & 1.60e5 &  1 & 7.73e-2 \\
&  7 & 1.32e7 & 6.32e4 & 13 & 9.92e-2 & 1.33e7 & 7.97e4 &  4 & 8.63e-2 & 1.33e7 & 1.27e5 &  1 & 8.72e-2 \\
&  8 & 1.32e7 & 5.68e4 & 12 & 9.73e-2 & 1.32e7 & 7.61e4 &  5 & 8.13e-2 &        &        &    &         \\
&  9 & 1.31e7 & 5.18e4 & 13 & 9.15e-2 & 1.32e7 & 7.10e4 &  5 & 7.53e-2 &        &        &    &         \\
& 10 &        &        &    &         & 1.31e7 & 4.78e4 &  4 & 6.96e-2 &        &        &    &         \\
\hline\hline

\multirow{12}{*}{\rotatebox[origin=center]{90}{$200\times200\times132$}} 
& -1 & 3.35e8 & --     & -- & --      & 3.35e8 & --     & -- & --      & 3.35e8 & --     & -- & --      \\
&  0 & 8.04e7 & 1.64e6 & -- & --      & 8.01e7 & 1.64e6 & -- & --      & 8.07e7 & 1.63e6 & -- & --      \\
&  1 & 6.86e7 & 1.45e6 &  6 & 8.93e-2 & 5.73e7 & 1.25e6 &  3 & 5.83e-2 & 6.85e7 & 1.44e6 &  1 & 2.11e-2 \\
&  2 & 5.95e7 & 1.28e6 &  6 & 9.40e-2 & 5.05e7 & 1.10e6 &  3 & 6.73e-2 & 5.91e7 & 1.27e6 &  1 & 2.24e-2 \\
&  3 & 5.24e7 & 1.13e6 &  6 & 9.98e-2 & 4.53e7 & 9.70e5 &  3 & 7.16e-2 & 5.54e7 & 1.19e6 &  1 & 2.40e-2 \\
&  4 & 4.69e7 & 9.91e5 &  7 & 8.88e-2 & 4.12e7 & 8.53e5 &  3 & 7.59e-2 & 4.90e7 & 1.05e6 &  1 & 2.48e-2 \\
&  5 & 4.26e7 & 8.71e5 &  7 & 9.36e-2 & 3.81e7 & 7.50e5 &  3 & 8.04e-2 & 4.41e7 & 9.16e5 &  1 & 2.66e-2 \\
&  6 & 3.93e7 & 7.66e5 &  7 & 1.00e-1 & 3.57e7 & 6.59e5 &  3 & 8.38e-2 & 4.04e7 & 8.02e5 &  1 & 2.87e-2 \\
&  7 & 3.66e7 & 6.74e5 &  8 & 9.19e-2 & 3.38e7 & 5.79e5 &  3 & 8.89e-2 & 3.75e7 & 7.02e5 &  1 & 3.13e-2 \\
&  8 & 3.46e7 & 5.92e5 &  8 & 9.74e-2 & 3.23e7 & 5.08e5 &  3 & 9.29e-2 & 3.53e7 & 6.13e5 &  1 & 3.39e-2 \\
&  9 & 3.30e7 & 5.20e5 &  9 & 8.94e-2 & 3.12e7 & 4.47e5 &  3 & 9.64e-2 & 3.36e7 & 5.36e5 &  1 & 3.73e-2 \\
& 10 & 3.17e7 & 4.57e5 &  9 & 9.54e-2 & 2.95e7 & 7.01e6 &  4 & 7.21e-2 & 3.22e7 & 4.68e5 &  1 & 4.09e-2 \\
\hline
\end{tabular}

\end{table}

\paragraph{Performance Of ADMM} 
\label{par:performance_of_admm}
We study the performance of the proposed ADMM scheme and its dependence on the choice of the augmentation parameter $\rho$  using the 3D data set described above.
The convergence of the primal residual, dual residual, and
$\rho$ for the adaptive method, the fixed parameter method, and the adaptive method
with lower bound, are visualized in \cref{fig:3Dbrain_admm}. 
In the fixed case we use $\rho=10^{2}$, which is also the lower bound in the
bounded adaptive method. For both adaptive schemes the initial parameter
is $\rho^0=10^6$. As discussed in \cref{sub:multilevel}, we  average $\bfb$ and $\bfz$ after prolongation to start the next
level.
Clearly, larger values for $\rho$ result in a smaller primal
residual, whereas smaller values for $\rho$ result in a smaller dual
residual. The benefit of the adaptive method with lower bound is, that
it forces the primal residual to converge fast during the first
iterations and afterwards keeps the primal residual small while the dual residual
converges as well. This way the equality constraint in~\cref{eq:admmprob}
is almost satisfied during most iterations, meaning that we indeed solve
the problem~\cref{eq:admmprob}. \par 
Note that, while $\rho$ changes dramatically on the coarsest level, it remains constant on the finer levels, where computations are more costly. 
The separability of the first -- and in our experience most expensive -- ADMM subproblem~\cref{eq:UadmmStep1} provides a way for substantial speed up by using parallel computing. In principle, the data of each image column can be processed completely in parallel. To strike a balance between communication overhead and computations, in our current MATLAB implementation we correct all image slices in parallel.
\begin{figure}[t]
	\scriptsize
  \setlength\iwidth{100mm}
  \setlength\iheight{18mm}
  \centering
  \begin{tabular}{r@{\,}r}
      & \multicolumn{1}{c}{\raisebox{-.5\height}{
%
%
\definecolor{mycolor1}{rgb}{0.00000,0.44700,0.74100}%
\definecolor{mycolor2}{rgb}{0.85000,0.32500,0.09800}%
\definecolor{mycolor3}{rgb}{0.92900,0.69400,0.12500}%
\definecolor{mycolor4}{rgb}{0.49400,0.18400,0.55600}%
\begin{tikzpicture}

\begin{axis}[%
width=70mm,
height=35mm,
hide axis,
xmin = 0,
xmax = 1,
ymin = 0,
ymax = 1,
enlargelimits=false,
legend style={legend cell align=left,align=left,draw=white!15!black}
]
\addlegendimage{mycolor2,mark=x};
\addlegendentry{adaptive with $\rho^0=10^6$ and $\rho_{\textrm{min}}=10^{2}$};
\addlegendimage{mycolor3,mark=*};
\addlegendentry{adaptive with $\rho^0 = 10^6$};
\addlegendimage{mycolor4,mark=asterisk};
\addlegendentry{fixed $\rho=10^2$};
\end{axis}
\end{tikzpicture}

      \rotatebox[origin=c]{90}{primal residual} &
      \raisebox{-.5\height}{
%
%
\definecolor{mycolor1}{rgb}{0.85000,0.32500,0.09800}%
\definecolor{mycolor2}{rgb}{0.92900,0.69400,0.12500}%
\definecolor{mycolor3}{rgb}{0.49400,0.18400,0.55600}%
\begin{tikzpicture}

\begin{axis}[%
width=0.951\iwidth,
height=\iheight,
at={(0\iwidth,0\iheight)},
scale only axis,
xmin=1,
xmax=102,
xtick={1,20,40,51,70,81,90},
xticklabels={{1},{20},{40},{1},{20},{1},{10}},
ymode=log,
ymin=1e-06,
ymax=1e4,
yminorticks=true,
axis background/.style={fill=white}
]
\addplot [color=mycolor2,solid,mark=*,mark options={solid},mark size=0.4mm,forget plot]
  table[row sep=crcr]{%
1	4.0270922149947e-06\\
2	7.20218845719601e-06\\
3	1.21213240094244e-05\\
4	2.03521389631773e-05\\
5	3.41084564600954e-05\\
6	5.70240857899927e-05\\
7	9.52257081089007e-05\\
8	0.000158114200744504\\
9	0.000261275340856521\\
10	0.000429196768576031\\
11	0.00069996179956985\\
12	0.00113349587485701\\
13	0.00181427170873687\\
14	0.00286898255076285\\
15	0.00448121488050755\\
16	0.00690181731144229\\
17	0.0104845931295637\\
18	0.0157234333129529\\
19	0.0232526719870589\\
20	0.033997502513106\\
21	0.0490781023765818\\
22	0.0702026375434463\\
23	0.0997028742401245\\
24	0.139833449934267\\
25	0.194924953302421\\
26	0.27157995568542\\
27	0.376297334672662\\
28	0.517964399724777\\
29	0.706988962323782\\
30	0.966010397379396\\
31	1.30409510325439\\
32	1.746972476753\\
33	2.36604172463565\\
34	3.21978227793878\\
35	4.3253567612203\\
36	5.81081972929069\\
37	7.75042612648997\\
38	10.2289580787846\\
39	14.0233244109084\\
40	18.4969675268079\\
41	25.5639378237604\\
42	31.730983660969\\
43	39.4311323919886\\
44	50.5404477423113\\
45	57.2146992926342\\
46	69.4387405023524\\
47	84.9076569827735\\
48	89.4495491689222\\
49	111.224399456795\\
50	124.427213834029\\
};
\addplot [color=mycolor2,solid,mark=*,mark options={solid},mark size=0.4mm,forget plot]
  table[row sep=crcr]{%
51	902.264438119301\\
52	651.883526271296\\
53	515.947018099874\\
54	430.704298545459\\
};
\addplot [color=mycolor2,solid,mark=*,mark options={solid},mark size=0.4mm,forget plot]
  table[row sep=crcr]{%
81	762.189775521876\\
82	755.27269522897\\
83	719.892683642454\\
84	653.850567439064\\
85	591.819730308844\\
86	542.426862967811\\
87	501.213712191434\\
};
\addplot [color=mycolor3,solid,mark=asterisk,mark options={solid},mark size=0.5mm,forget plot]
  table[row sep=crcr]{%
1	36.7162157067117\\
2	16.1832994602996\\
3	9.52029811840417\\
4	6.98709577249599\\
5	5.84663179189216\\
6	4.95512250986258\\
7	4.50111208146666\\
8	3.96918345688912\\
9	3.92535235622612\\
10	3.62737343072612\\
11	3.03154730884259\\
12	3.11898885453895\\
13	2.89358364293971\\
14	3.09029657793698\\
15	2.64462893499775\\
16	2.46692784861641\\
17	2.32937306810851\\
18	2.28595596686339\\
19	2.24934581076129\\
20	2.18575583183578\\
21	2.19184124390032\\
22	2.08666848261858\\
23	2.51431213481839\\
24	2.0941557976641\\
25	2.2155026488443\\
26	2.02548340661559\\
27	1.95669903473165\\
28	1.83237922826149\\
29	1.93005079855606\\
30	1.61627232068177\\
31	2.25817355835671\\
32	1.98838655084846\\
33	1.71046344427502\\
34	1.80393987843728\\
35	1.85457496985889\\
36	1.6860388834015\\
37	1.70205120225647\\
38	2.16961603330371\\
39	1.91801316526057\\
40	1.6200694080205\\
41	1.63202039019773\\
42	1.39885283087084\\
43	1.4328398918361\\
44	2.11533190747504\\
45	1.79001951396078\\
46	1.53593885390532\\
47	1.4551667881364\\
48	1.52586749016558\\
49	1.34242790778972\\
50	1.3361765677909\\
};
\addplot [color=mycolor3,solid,mark=asterisk,mark options={solid},mark size=0.5mm,forget plot]
  table[row sep=crcr]{%
51	83.3010537846591\\
52	32.2786722973492\\
53	18.5183165608232\\
54	13.5672678217792\\
55	11.4454121020495\\
56	10.2264782061219\\
57	10.2915917818669\\
58	9.54108486786303\\
59	9.63682266082318\\
60	8.96353209288536\\
61	9.12294477321144\\
62	9.05875698506644\\
63	8.87916977061704\\
};
\addplot [color=mycolor3,solid,mark=asterisk,mark options={solid},mark size=0.5mm,forget plot]
  table[row sep=crcr]{%
81	168.143154079044\\
82	93.9181939620331\\
83	59.3394337616316\\
};
\addplot [color=mycolor1,solid,mark=x,mark options={solid},mark size=0.5mm,forget plot]
  table[row sep=crcr]{%
1	4.0270922149947e-06\\
2	7.20218845719601e-06\\
3	1.21213240094244e-05\\
4	2.03521389631773e-05\\
5	3.41084564600954e-05\\
6	5.70240857899927e-05\\
7	9.52257081089007e-05\\
8	0.000158114200744504\\
9	0.000261275340856521\\
10	0.000429196768576031\\
11	0.00069996179956985\\
12	0.00113349587485701\\
13	0.00181427170873687\\
14	0.00286898255076285\\
15	0.00448121488050755\\
16	0.00690181731144229\\
17	0.0104845931295637\\
18	0.0157234333129529\\
19	0.0232526719870589\\
20	0.033997502513106\\
21	0.0490781023765818\\
22	0.0702026375434463\\
23	0.0997028742401245\\
24	0.139833449934267\\
25	0.194924953302421\\
26	0.27157995568542\\
27	0.376297334672662\\
28	0.517964399724777\\
29	0.706988962323782\\
30	0.966010397379396\\
31	1.30409510325439\\
32	1.746972476753\\
33	2.36604172463565\\
34	3.21978227793878\\
35	4.3253567612203\\
36	5.81081972929069\\
37	5.06047033461272\\
38	4.41137543458791\\
39	3.85301128691073\\
40	3.90505508906844\\
41	3.48978655042709\\
42	3.00523404492587\\
43	2.91796364977769\\
44	2.87729643625445\\
45	2.66630551412786\\
46	2.75486470443655\\
47	2.82312982829067\\
48	2.40219534977572\\
49	2.24796938818601\\
50	2.16174047267621\\
};
\addplot [color=mycolor1,solid,mark=x,mark options={solid},mark size=0.5mm,forget plot]
  table[row sep=crcr]{%
51	85.093059207527\\
52	32.8990733600462\\
53	19.3953212176384\\
54	15.0039942571808\\
55	12.6790903601064\\
56	11.7798614309365\\
57	11.4120287696342\\
58	10.7636223379753\\
59	10.9837509985643\\
60	10.4858575958678\\
61	10.664438196424\\
62	10.5690628517679\\
63	9.93370113687859\\
64	9.81600799588841\\
65	9.84341067583825\\
66	10.0661346904672\\
67	9.43277707920541\\
68	9.45835168385342\\
69	9.23679625933943\\
70	9.67035465782869\\
71	9.46700641756793\\
72	9.4069410054024\\
73	9.04717165034337\\
74	9.21867030752701\\
75	9.27035740306092\\
76	8.57434442395155\\
77	8.97973559955069\\
78	9.16058957044499\\
79	9.03416430006331\\
80	8.63726750684166\\
};
\addplot [color=mycolor1,solid,mark=x,mark options={solid},mark size=0.5mm,forget plot]
  table[row sep=crcr]{%
81	166.108368415754\\
82	90.9438762811845\\
83	58.1172039187506\\
};
\addplot [color=black,dotted,forget plot]
  table[row sep=crcr]{%
51	1e-06\\
51	812.965514959712\\
};
\addplot [color=black,dotted,forget plot]
  table[row sep=crcr]{%
81	1e-06\\
81	812.965514959712\\
};
\node[right, align=left, text=black, font=\tiny]
at (axis cs:1,3e2) {$50\times50\times33$};
\node[right, align=left, text=black, font=\tiny]
at (axis cs:51,5e-5) {$100\times100\times66$};
\node[right, align=left, text=black, font=\tiny]
at (axis cs:81,5e-5) {$200\times200\times132$};
\end{axis}
\end{tikzpicture}

      \rotatebox[origin=c]{90}{dual residual} &
      \raisebox{-.5\height}{
%
%
\definecolor{mycolor1}{rgb}{0.85000,0.32500,0.09800}%
\definecolor{mycolor2}{rgb}{0.92900,0.69400,0.12500}%
\definecolor{mycolor3}{rgb}{0.49400,0.18400,0.55600}%
\begin{tikzpicture}

\begin{axis}[%
width=0.951\iwidth,
height=\iheight,
at={(0\iwidth,0\iheight)},
scale only axis,
xmin=1,
xmax=102,
xtick={1,20,40,51,70,81,90},
xticklabels={{1},{20},{40},{1},{20},{1},{10}},
ymode=log,
ymin=20,
ymax=1e8,
yminorticks=true,
axis background/.style={fill=white}
]
\addplot [color=mycolor2,solid,mark=*,mark options={solid},mark size=0.4mm,forget plot]
  table[row sep=crcr]{%
1	29951859.6172645\\
2	32013954.7210074\\
3	31900662.9104519\\
4	31716631.2125805\\
5	31480144.4646572\\
6	31178211.278359\\
7	30826633.285988\\
8	30342648.9406074\\
9	29738724.5510657\\
10	28993997.6817669\\
11	28087052.825227\\
12	27019854.9103724\\
13	25743521.7819206\\
14	24273831.2249898\\
15	22638385.1812062\\
16	20844866.3518203\\
17	18949201.91437\\
18	17018102.6862985\\
19	15084934.8457973\\
20	13221290.0774088\\
21	11456744.130306\\
22	9832968.2745432\\
23	8366215.14819595\\
24	7050936.45114824\\
25	5889072.05685458\\
26	4904180.9936516\\
27	4064269.08976907\\
28	3354122.08576543\\
29	2751690.4641207\\
30	2276157.9032665\\
31	1872562.11578192\\
32	1540417.5479263\\
33	1274561.25543853\\
34	1055459.1006495\\
35	873576.401468813\\
36	728752.516619523\\
37	605689.180570703\\
38	501669.527369197\\
39	421215.660213934\\
40	353844.703298765\\
41	297482.853418856\\
42	241206.422771023\\
43	200158.780141644\\
44	159884.549916928\\
45	125573.761483189\\
46	97530.2654826952\\
47	75215.9056192645\\
48	55612.6741935472\\
49	42377.0220292663\\
50	30279.6647708508\\
};
\addplot [color=mycolor2,solid,mark=*,mark options={solid},mark size=0.4mm,forget plot]
  table[row sep=crcr]{%
51	8680.65805741571\\
52	6393.69715487515\\
53	3896.74460783694\\
54	2858.10780882427\\
};
\addplot [color=mycolor2,solid,mark=*,mark options={solid},mark size=0.4mm,forget plot]
  table[row sep=crcr]{%
81	1569.37759495956\\
82	846.631149731424\\
83	416.827192983326\\
84	280.01835224228\\
85	212.440448346595\\
86	168.565530757699\\
87	143.584420211527\\
};
\addplot [color=mycolor3,solid,mark=asterisk,mark options={solid},mark size=0.5mm,forget plot]
  table[row sep=crcr]{%
1	3515631.80274783\\
2	1653031.58884742\\
3	1093412.44039116\\
4	834978.145030282\\
5	700843.353793853\\
6	607952.978665085\\
7	543245.887697114\\
8	497910.521445297\\
9	462030.877499428\\
10	427100.993561871\\
11	390631.82393411\\
12	367628.21075521\\
13	345816.588027836\\
14	337955.398586895\\
15	317648.228841788\\
16	300518.631922097\\
17	290391.903367302\\
18	281685.012646863\\
19	270211.643356346\\
20	267161.302846183\\
21	255964.118197237\\
22	241815.792682023\\
23	253216.175705168\\
24	233321.722058357\\
25	227536.511019245\\
26	223978.398165352\\
27	214512.844671105\\
28	208077.500275439\\
29	203600.551399777\\
30	193093.593357314\\
31	209409.001569482\\
32	197386.171590776\\
33	182277.159961184\\
34	181565.331746749\\
35	178298.3399345\\
36	172762.377483189\\
37	169240.632910693\\
38	185098.110110879\\
39	172809.33441115\\
40	155770.296161421\\
41	159048.760237224\\
42	148198.982998602\\
43	147125.562280712\\
44	172002.422322447\\
45	156445.074469559\\
46	147000.855166927\\
47	139767.411638537\\
48	142848.003871834\\
49	130767.529326004\\
50	128961.270699991\\
};
\addplot [color=mycolor3,solid,mark=asterisk,mark options={solid},mark size=0.5mm,forget plot]
  table[row sep=crcr]{%
51	194635.252663274\\
52	93582.4547645147\\
53	56172.7678520124\\
54	42726.1339623889\\
55	35990.4424240302\\
56	32231.8932358807\\
57	30616.2916532503\\
58	27994.0523671273\\
59	27527.2408687458\\
60	25733.3973973279\\
61	25471.752450302\\
62	24981.4582801897\\
63	24828.0551634691\\
};
\addplot [color=mycolor3,solid,mark=asterisk,mark options={solid},mark size=0.5mm,forget plot]
  table[row sep=crcr]{%
81	12095.6271772525\\
82	10441.9706174032\\
83	5906.5109657824\\
};
\addplot [color=mycolor1,solid,mark=x,mark options={solid},mark size=0.5mm,forget plot]
  table[row sep=crcr]{%
1	29951859.6172645\\
2	32013954.7210074\\
3	31900662.9104519\\
4	31716631.2125805\\
5	31480144.4646572\\
6	31178211.278359\\
7	30826633.285988\\
8	30342648.9406074\\
9	29738724.5510657\\
10	28993997.6817669\\
11	28087052.825227\\
12	27019854.9103724\\
13	25743521.7819206\\
14	24273831.2249898\\
15	22638385.1812062\\
16	20844866.3518203\\
17	18949201.91437\\
18	17018102.6862985\\
19	15084934.8457973\\
20	13221290.0774088\\
21	11456744.130306\\
22	9832968.2745432\\
23	8366215.14819595\\
24	7050936.45114824\\
25	5889072.05685458\\
26	4904180.9936516\\
27	4064269.08976907\\
28	3354122.08576543\\
29	2751690.4641207\\
30	2276157.9032665\\
31	1872562.11578192\\
32	1540417.5479263\\
33	1274561.25543853\\
34	1055459.1006495\\
35	873576.401468813\\
36	728752.516619523\\
37	638347.805210521\\
38	565755.628550367\\
39	508918.54395066\\
40	483237.854192351\\
41	443084.268084066\\
42	408529.973764044\\
43	382868.24810077\\
44	368518.054500784\\
45	344530.797126686\\
46	333271.379328012\\
47	319896.038919755\\
48	300493.067511109\\
49	290137.908317175\\
50	280803.477875814\\
};
\addplot [color=mycolor1,solid,mark=x,mark options={solid},mark size=0.5mm,forget plot]
  table[row sep=crcr]{%
51	215137.030416358\\
52	107161.796719342\\
53	68058.85310532\\
54	54962.0025481419\\
55	47595.4916271661\\
56	43529.4528033345\\
57	40283.2593991181\\
58	38894.5914133675\\
59	38082.9439459713\\
60	36027.4097115236\\
61	35006.5776123772\\
62	34603.3721409902\\
63	33145.5565998212\\
64	32193.4579808827\\
65	31481.0966300934\\
66	31469.6443358118\\
67	30077.5010993653\\
68	29466.6059953503\\
69	28731.0423036411\\
70	28848.2988313361\\
71	28195.5643709243\\
72	27719.8231330948\\
73	26968.0610456424\\
74	27103.3339745365\\
75	27213.0499159113\\
76	25804.1279213415\\
77	25837.8182096898\\
78	25460.3392208084\\
79	25571.9855755905\\
80	24594.4088028831\\
};
\addplot [color=mycolor1,solid,mark=x,mark options={solid},mark size=0.5mm,forget plot]
  table[row sep=crcr]{%
81	12605.9534962877\\
82	10598.609935618\\
83	6052.64903450105\\
};
\addplot [color=black,dotted,forget plot]
  table[row sep=crcr]{%
51	20\\
51	1e8\\
};
\addplot [color=black,dotted,forget plot]
  table[row sep=crcr]{%
81	20\\
81	1e8\\
};
\node[right, align=left, text=black, font=\tiny]
at (axis cs:1,4e2) {$50\times50\times33$};
\node[right, align=left, text=black, font=\tiny]
at (axis cs:51,4e6) {$100\times100\times66$};
\node[right, align=left, text=black, font=\tiny]
at (axis cs:81,4e6) {$200\times200\times132$};
\end{axis}
\end{tikzpicture}

      \rotatebox[origin=c]{90}{parameter $\rho$} &
      \raisebox{-.5\height}{
%
%
\definecolor{mycolor1}{rgb}{0.85000,0.32500,0.09800}%
\definecolor{mycolor2}{rgb}{0.92900,0.69400,0.12500}%
\definecolor{mycolor3}{rgb}{0.49400,0.18400,0.55600}%
\begin{tikzpicture}

\begin{axis}[%
width=0.951\iwidth,
height=\iheight,
at={(0\iwidth,0\iheight)},
scale only axis,
xmin=1,
xmax=102,
xtick={1,20,40,51,70,81,90},
xticklabels={{1},{20},{40},{1},{20},{1},{10}},
ymode=log,
ymin=1,
ymax=5e6,
yminorticks=true,
axis background/.style={fill=white}
]
\addplot [color=mycolor2,solid,mark=*,mark options={solid},mark size=0.4mm,forget plot]
  table[row sep=crcr]{%
1	1000000\\
2	769230.769230769\\
3	591715.976331361\\
4	455166.135639508\\
5	350127.796645776\\
6	269329.074342904\\
7	207176.211033003\\
8	159366.316179233\\
9	122589.473984026\\
10	94299.5953723274\\
11	72538.1502864057\\
12	55798.577143389\\
13	42921.9824179915\\
14	33016.9095523012\\
15	25397.6227325394\\
16	19536.6328711841\\
17	15028.1791316801\\
18	11560.1377936001\\
19	8892.41368738467\\
20	6840.31822106513\\
21	5261.78324697318\\
22	4047.52557459475\\
23	3113.48121122673\\
24	2394.98554709749\\
25	1842.29657469037\\
26	1417.15121130029\\
27	1090.11631638484\\
28	838.55101260372\\
29	645.0392404644\\
30	496.184031126462\\
31	381.680023943432\\
32	293.600018418025\\
33	225.846168013865\\
34	173.727821549127\\
35	133.636785807021\\
36	102.797527543862\\
37	79.0750211875863\\
38	60.8269393750664\\
39	46.7899533654357\\
40	35.9922718195659\\
41	27.6863629381276\\
42	21.2972022600982\\
43	16.3824632769986\\
44	12.6018948284604\\
45	9.69376525266188\\
46	7.4567425020476\\
47	5.73595577080585\\
48	4.41227366985065\\
49	3.39405666911589\\
50	2.61081282239683\\
};
\addplot [color=mycolor2,solid,mark=*,mark options={solid},mark size=0.4mm,forget plot]
  table[row sep=crcr]{%
51	2.61081282239683\\
52	2.61081282239683\\
53	2.61081282239683\\
54	2.61081282239683\\
};
\addplot [color=mycolor2,solid,mark=*,mark options={solid},mark size=0.4mm,forget plot]
  table[row sep=crcr]{%
81	2.61081282239683\\
82	2.61081282239683\\
83	2.61081282239683\\
84	2.61081282239683\\
85	2.61081282239683\\
86	2.61081282239683\\
87	2.61081282239683\\
};
\addplot [color=mycolor3,solid,mark=asterisk,mark options={solid},mark size=0.5mm,forget plot]
  table[row sep=crcr]{%
1	100\\
2	100\\
3	100\\
4	100\\
5	100\\
6	100\\
7	100\\
8	100\\
9	100\\
10	100\\
11	100\\
12	100\\
13	100\\
14	100\\
15	100\\
16	100\\
17	100\\
18	100\\
19	100\\
20	100\\
21	100\\
22	100\\
23	100\\
24	100\\
25	100\\
26	100\\
27	100\\
28	100\\
29	100\\
30	100\\
31	100\\
32	100\\
33	100\\
34	100\\
35	100\\
36	100\\
37	100\\
38	100\\
39	100\\
40	100\\
41	100\\
42	100\\
43	100\\
44	100\\
45	100\\
46	100\\
47	100\\
48	100\\
49	100\\
50	100\\
};
\addplot [color=mycolor3,solid,mark=asterisk,mark options={solid},mark size=0.5mm,forget plot]
  table[row sep=crcr]{%
51	100\\
52	100\\
53	100\\
54	100\\
55	100\\
56	100\\
57	100\\
58	100\\
59	100\\
60	100\\
61	100\\
62	100\\
63	100\\
};
\addplot [color=mycolor3,solid,mark=asterisk,mark options={solid},mark size=0.5mm,forget plot]
  table[row sep=crcr]{%
81	100\\
82	100\\
83	100\\
};
\addplot [color=mycolor1,solid,mark=x,mark options={solid},mark size=0.5mm,forget plot]
  table[row sep=crcr]{%
1	1000000\\
2	769230.769230769\\
3	591715.976331361\\
4	455166.135639508\\
5	350127.796645776\\
6	269329.074342904\\
7	207176.211033003\\
8	159366.316179233\\
9	122589.473984026\\
10	94299.5953723274\\
11	72538.1502864057\\
12	55798.577143389\\
13	42921.9824179915\\
14	33016.9095523012\\
15	25397.6227325394\\
16	19536.6328711841\\
17	15028.1791316801\\
18	11560.1377936001\\
19	8892.41368738467\\
20	6840.31822106513\\
21	5261.78324697318\\
22	4047.52557459475\\
23	3113.48121122673\\
24	2394.98554709749\\
25	1842.29657469037\\
26	1417.15121130029\\
27	1090.11631638484\\
28	838.55101260372\\
29	645.0392404644\\
30	496.184031126462\\
31	381.680023943432\\
32	293.600018418025\\
33	225.846168013865\\
34	173.727821549127\\
35	133.636785807021\\
36	102.797527543862\\
37	102.797527543862\\
38	102.797527543862\\
39	102.797527543862\\
40	102.797527543862\\
41	102.797527543862\\
42	102.797527543862\\
43	102.797527543862\\
44	102.797527543862\\
45	102.797527543862\\
46	102.797527543862\\
47	102.797527543862\\
48	102.797527543862\\
49	102.797527543862\\
50	102.797527543862\\
};
\addplot [color=mycolor1,solid,mark=x,mark options={solid},mark size=0.5mm,forget plot]
  table[row sep=crcr]{%
51	102.797527543862\\
52	102.797527543862\\
53	102.797527543862\\
54	102.797527543862\\
55	102.797527543862\\
56	102.797527543862\\
57	102.797527543862\\
58	102.797527543862\\
59	102.797527543862\\
60	102.797527543862\\
61	102.797527543862\\
62	102.797527543862\\
63	102.797527543862\\
64	102.797527543862\\
65	102.797527543862\\
66	102.797527543862\\
67	102.797527543862\\
68	102.797527543862\\
69	102.797527543862\\
70	102.797527543862\\
71	102.797527543862\\
72	102.797527543862\\
73	102.797527543862\\
74	102.797527543862\\
75	102.797527543862\\
76	102.797527543862\\
77	102.797527543862\\
78	102.797527543862\\
79	102.797527543862\\
80	102.797527543862\\
};
\addplot [color=mycolor1,solid,mark=x,mark options={solid},mark size=0.5mm,forget plot]
  table[row sep=crcr]{%
81	102.797527543862\\
82	102.797527543862\\
83	102.797527543862\\
};
\addplot [color=black,dotted,forget plot]
  table[row sep=crcr]{%
51	1\\
51	5e6\\
};
\addplot [color=black,dotted,forget plot]
  table[row sep=crcr]{%
81	1\\
81	5e6\\
};
\node[right, align=left, text=black, font=\tiny]
at (axis cs:1,10) {$50\times50\times33$};
\node[right, align=left, text=black, font=\tiny]
at (axis cs:51,2e5) {$100\times100\times66$};
\node[right, align=left, text=black, font=\tiny]
at (axis cs:81,2e5) {$200\times200\times132$};
\end{axis}
\end{tikzpicture}
      & \multicolumn{1}{c}{ADMM iteration}
  \end{tabular}
  \caption{Convergence of ADMM for a multilevel 3D EPI susceptibility artifact correction. Two adaptive and one fixed strategies for choosing the augmentation parameter $\rho$ in~\cref{eq:Lp} are compared. 
First row shows the norm of the primal residual (cf.~\cref{eq:admmStop}) for each iteration of ADMM and each level in the
coarse to fine hierarchy. Second row shows the norm of the dual residual (cf.~\cref{eq:admmStop})
and the bottom row shows the augmentation parameter for each iteration.
The regularization parameter for the smoothing term is $\alpha=50$.}
  \label{fig:3Dbrain_admm}
\end{figure}
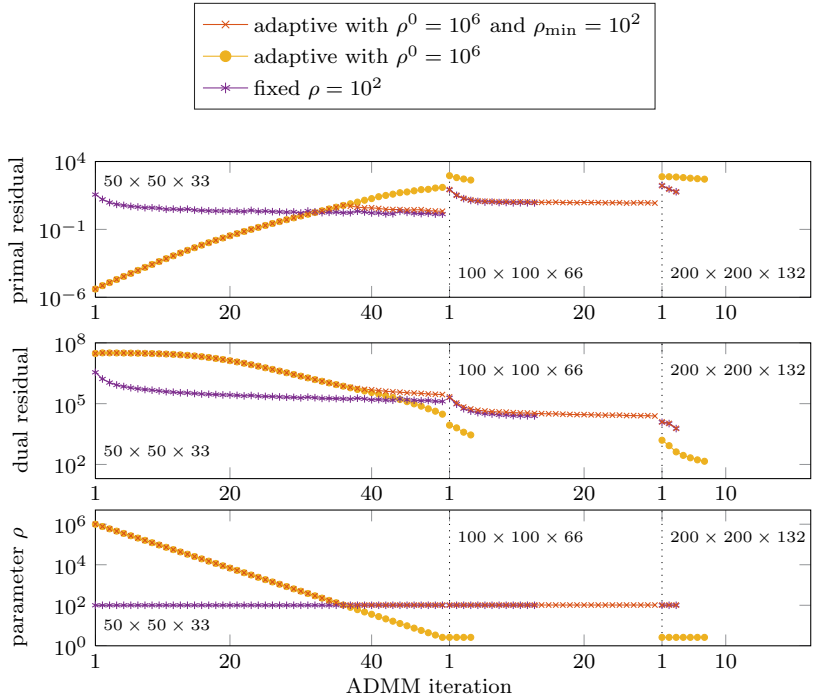

\paragraph{Comparison} 
\label{par:comparison}
We compare the quality of the proposed reconstruction methods to an
established state-of-the-art method for susceptibility artifact correction.
Exemplarily, we consider HySCO~\cite{RuthottoEtAl2013hysco} since it is based on the 
same variational formulation and uses the same three-level 
approach as our proposed method. 
In contrast to the proposed methods, HySCO uses a nodal discretization, which leads to coupling across slices and image columns (see second row in~\cref{fig:compare_sparsity}),  and provides only a simple Jacobi preconditioner.
We use the same 3D data as in the previous section, visualize the
correction results for all three different methods in
\cref{fig:compare}, and summarize quantitative results in \cref{tab:compare}. We provide results for two experiments using HySCO. First, we report results using the default settings in HySCO. By default, HySCO uses a smoothed cubic spline based approximation of the data described in~\cite{Modersitzki2009}, which aims at adding robustness. Since this comes at an additional computational costs, a second experiment using the same linear interpolation model used in the proposed methods is performed.
\begin{figure}[t]
	\renewcommand{\rottext}[1]{\rotatebox{90}{\hbox to 20mm{\hss #1\hss}}}
	\scriptsize
  	\centering
	\setlength{\iwidth}{35mm}
	\begin{tabular}{@{}|@{\,}c@{\,}|@{\,}c
	@{\,}c@{\,}c@{\,}c@{\,}|@{}}
		\hline
      	& initial data & HySCO (cubic)~\cite{RuthottoEtAl2013hysco} 
      	&  Gauss-Newton-PCG & ADMM \\ 
		\rottext{$\CI_{v}$} &
      	\includegraphics[width=\iwidth]{./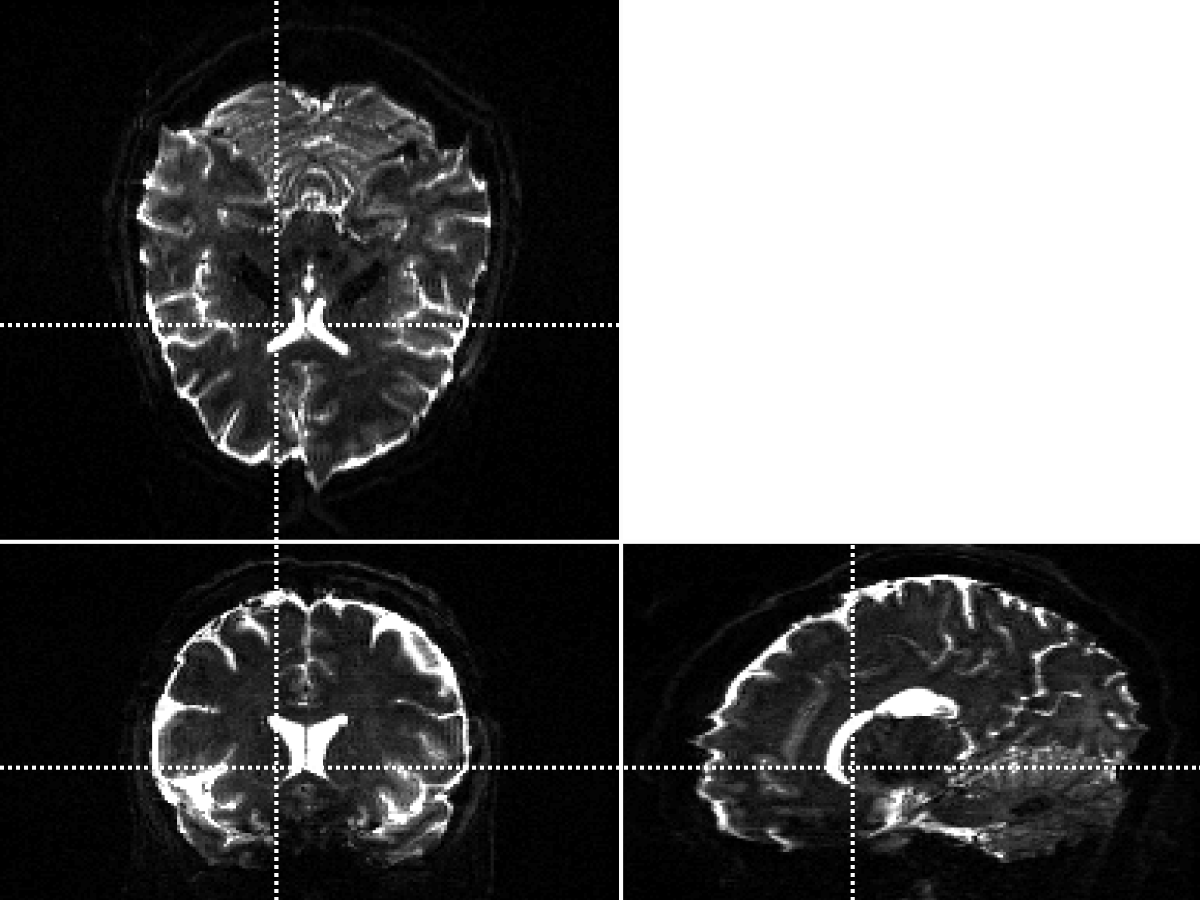} &
      	\includegraphics[width=\iwidth]{./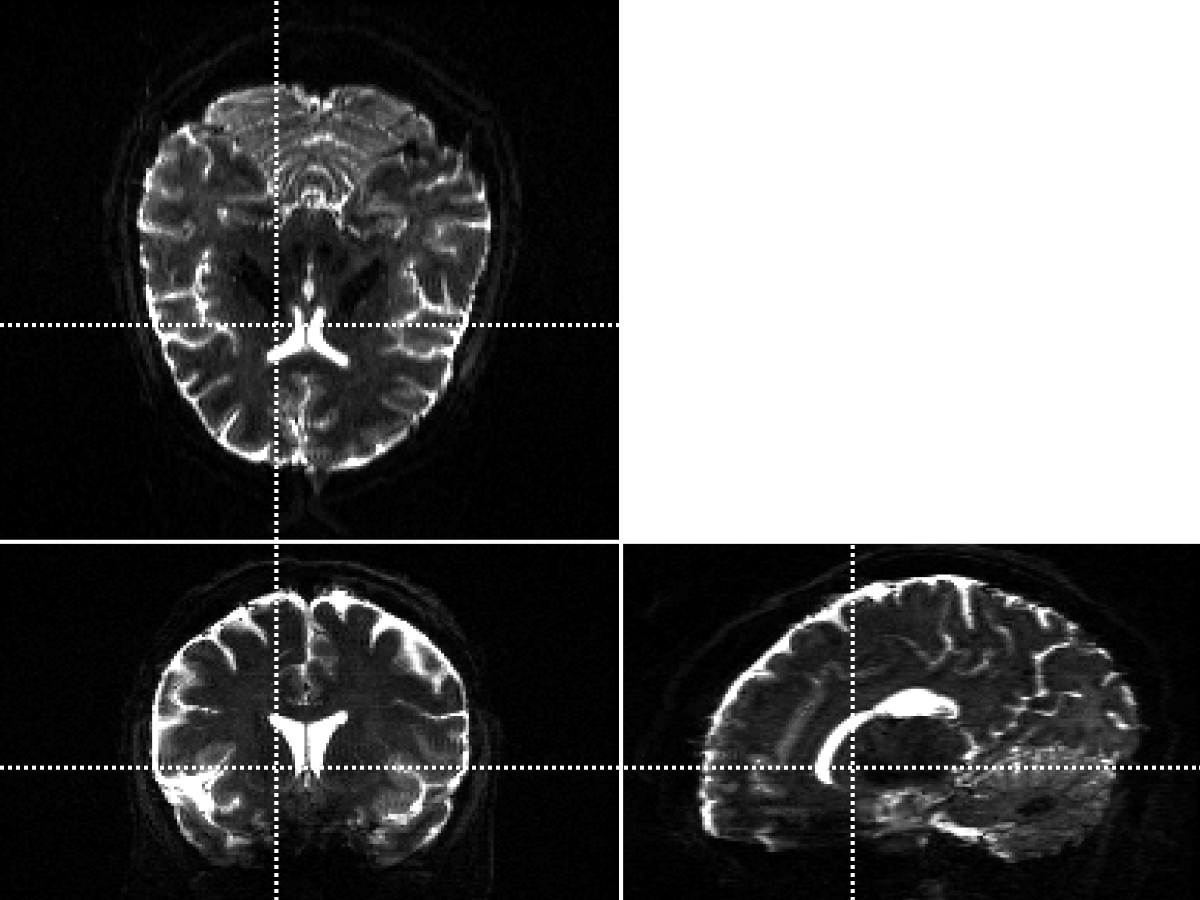} &
      	\includegraphics[width=\iwidth]{./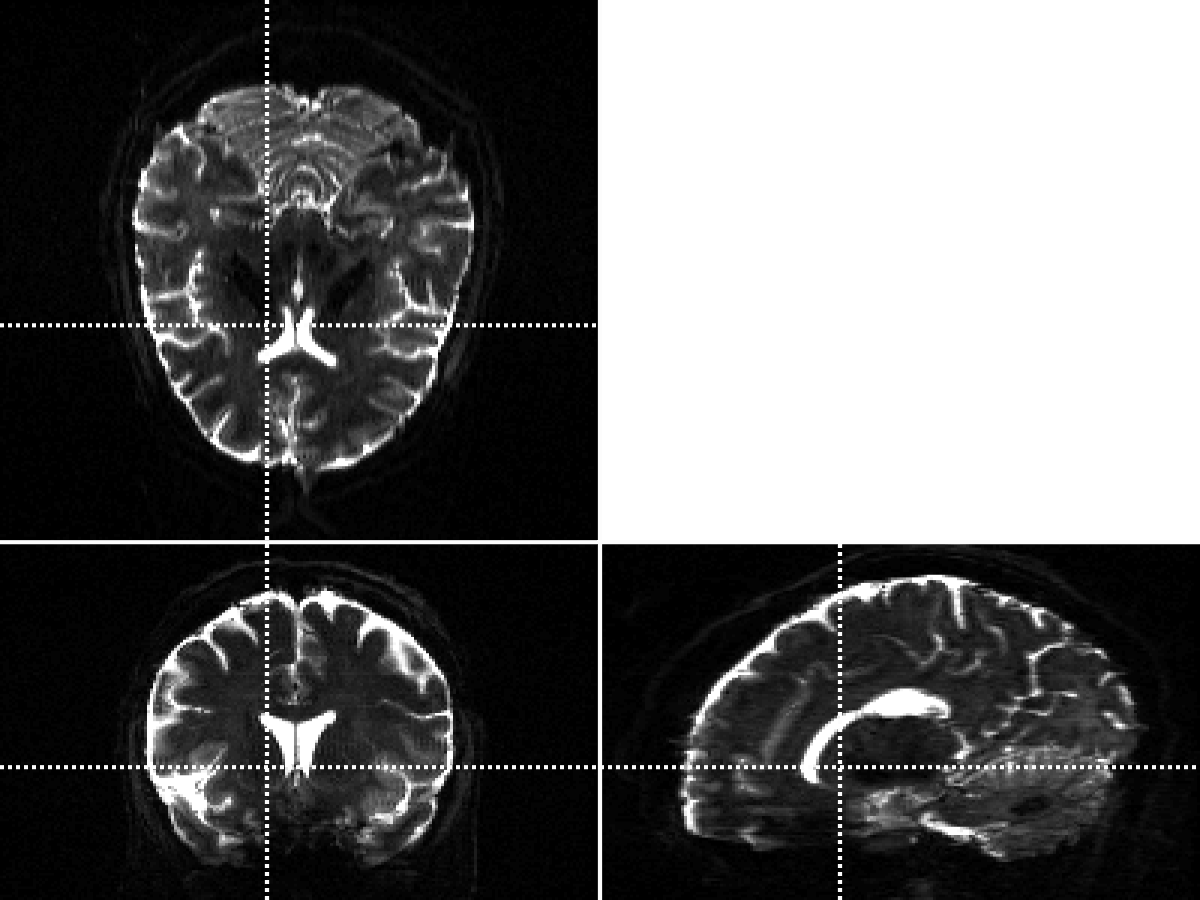} &
      	\includegraphics[width=\iwidth]{./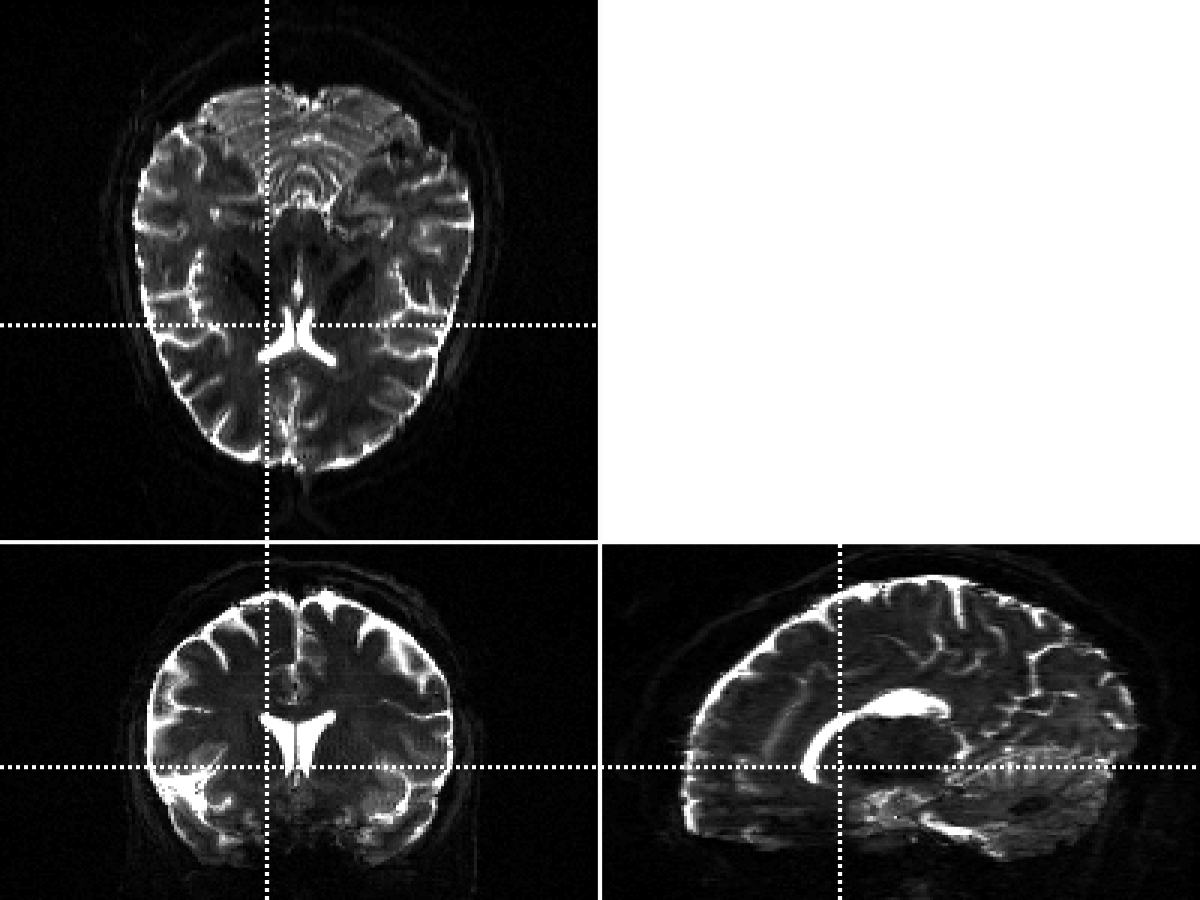} 
      	\\
		\rottext{$\CI_{-v}$} &
      	\includegraphics[width=\iwidth]{./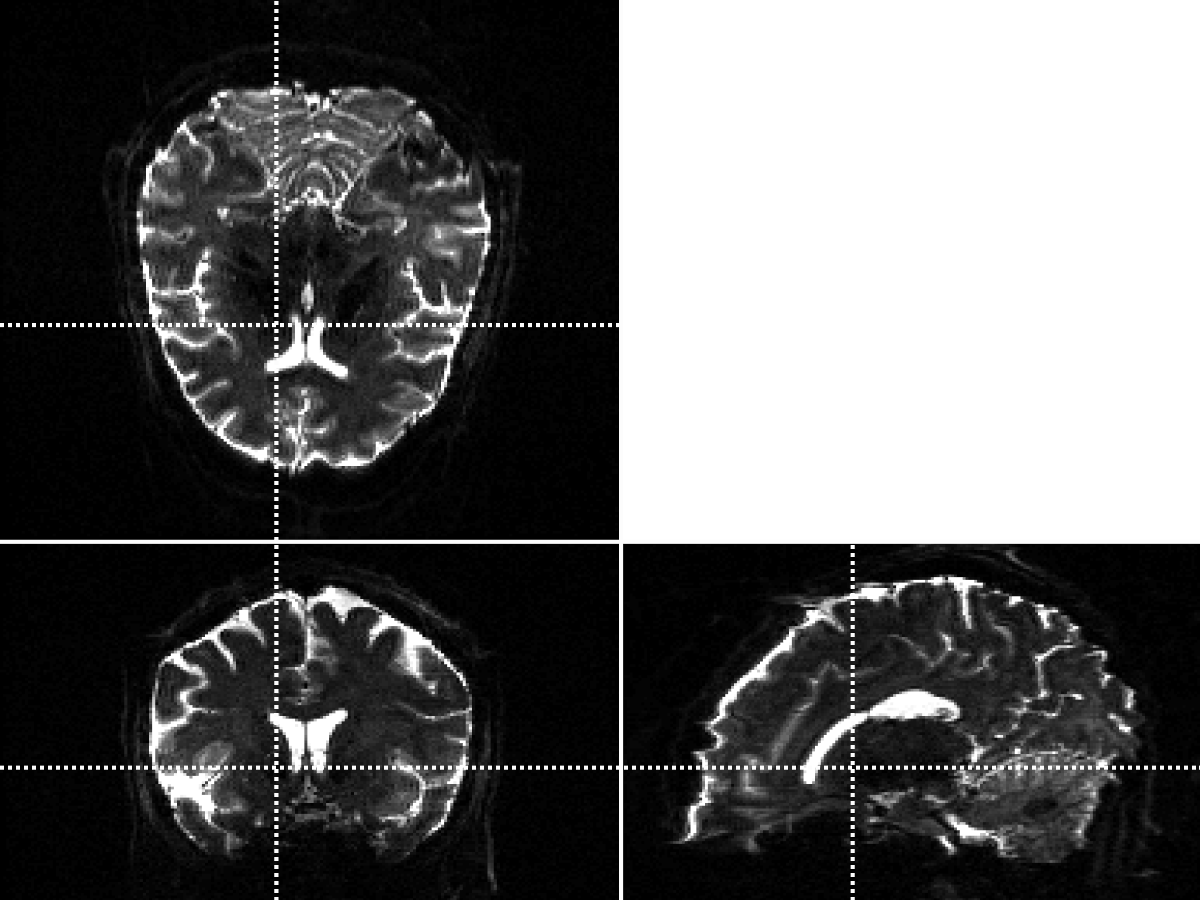} &
      	\includegraphics[width=\iwidth]{./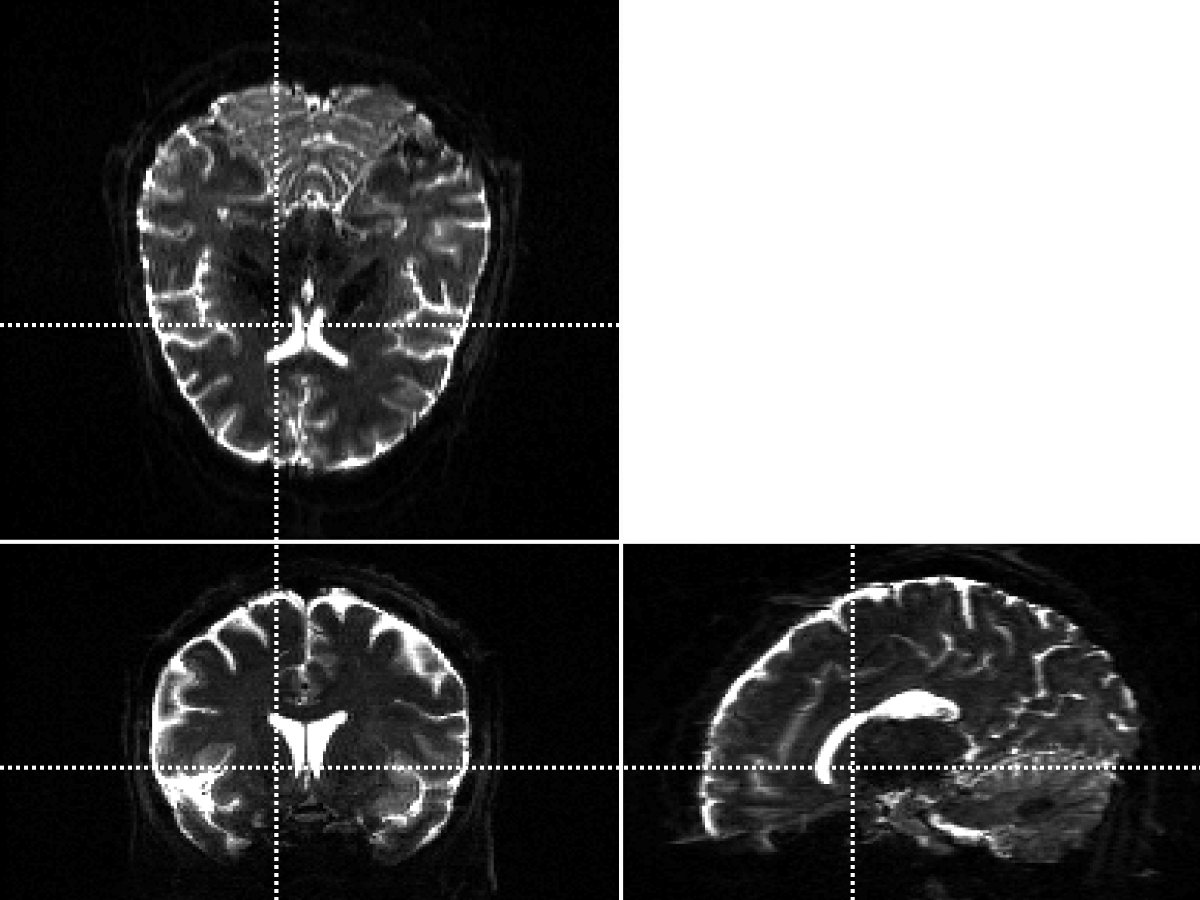} &
      	\includegraphics[width=\iwidth]{./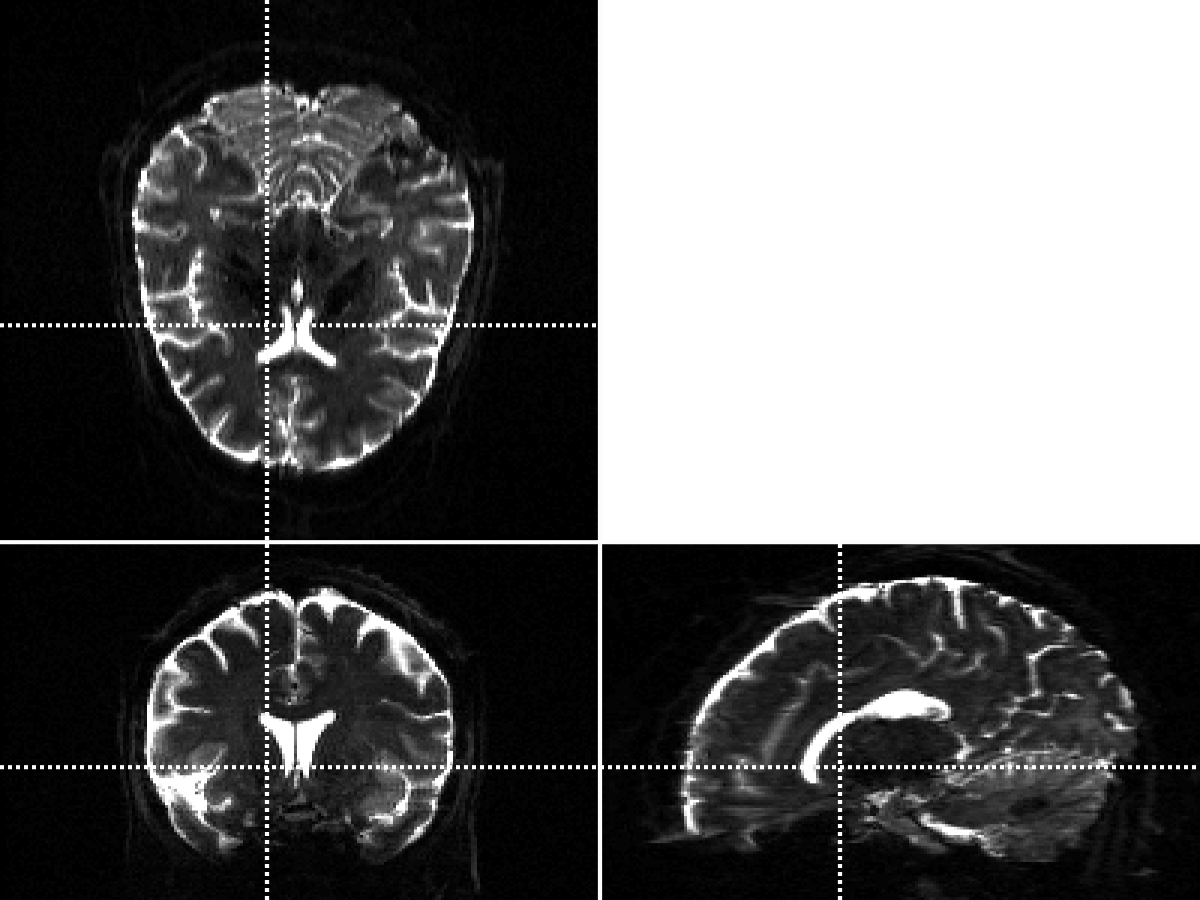} &
      	\includegraphics[width=\iwidth]{./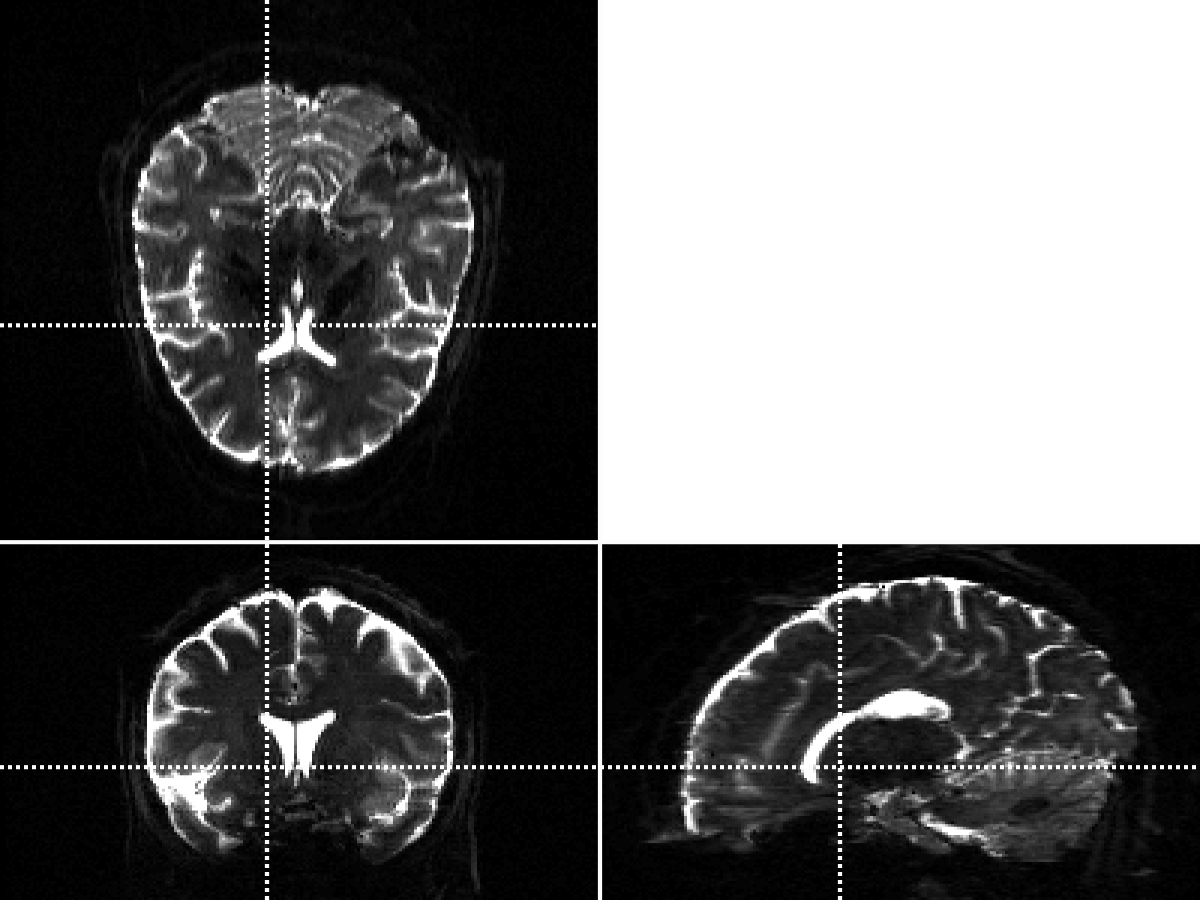} 
  	\\
	\rottext{$| \CI_{v} - \CI_{-v}|$} &
      	\includegraphics[width=\iwidth]{./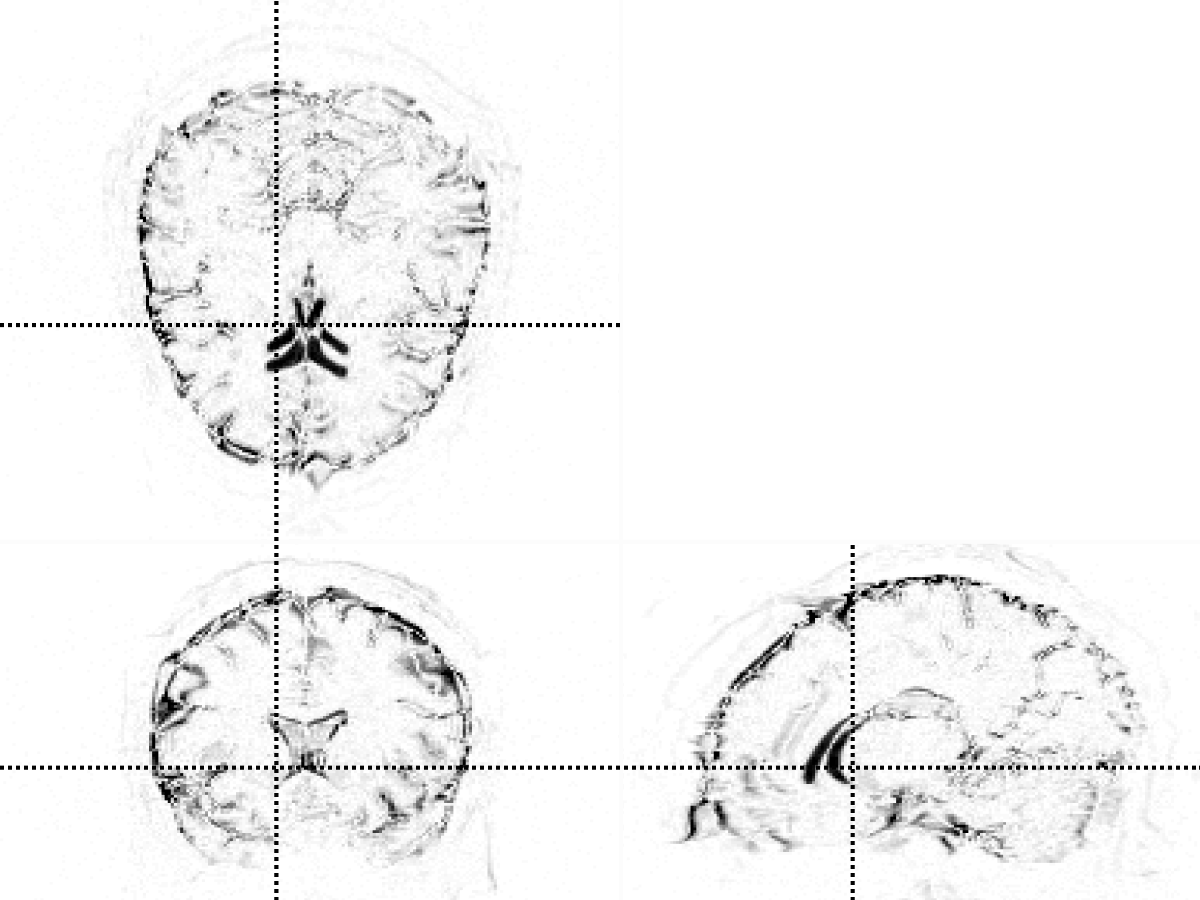} &
      	\includegraphics[width=\iwidth]{./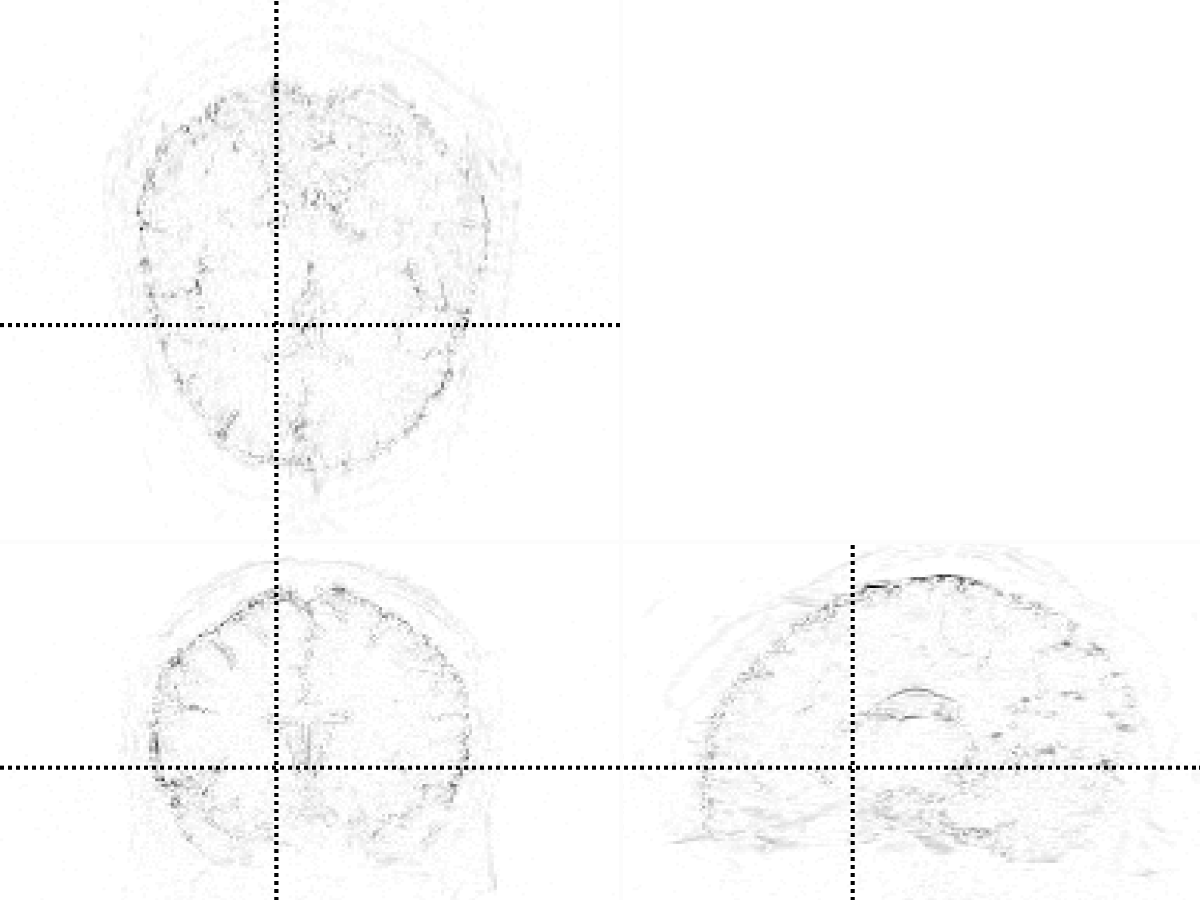} &
        \includegraphics[width=\iwidth]{./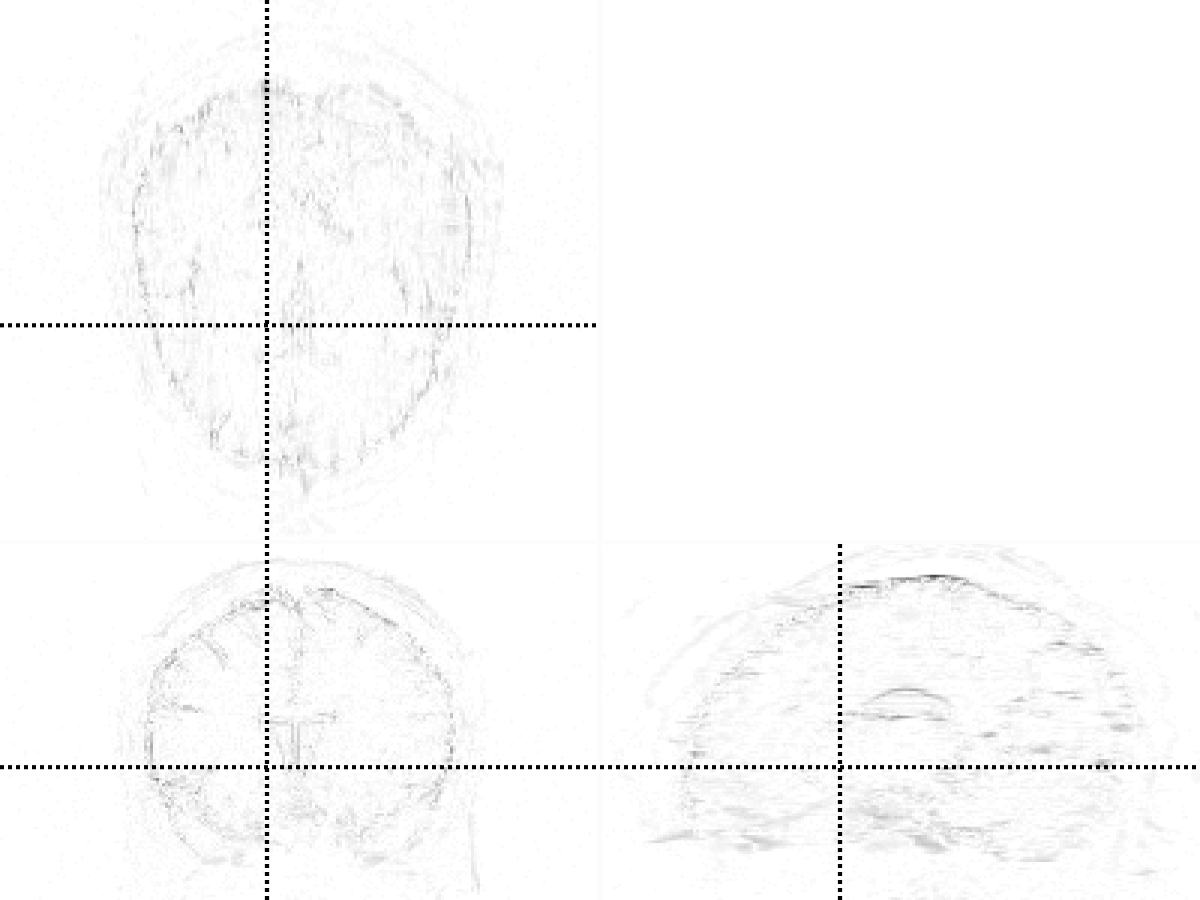} &
      	\includegraphics[width=\iwidth]{./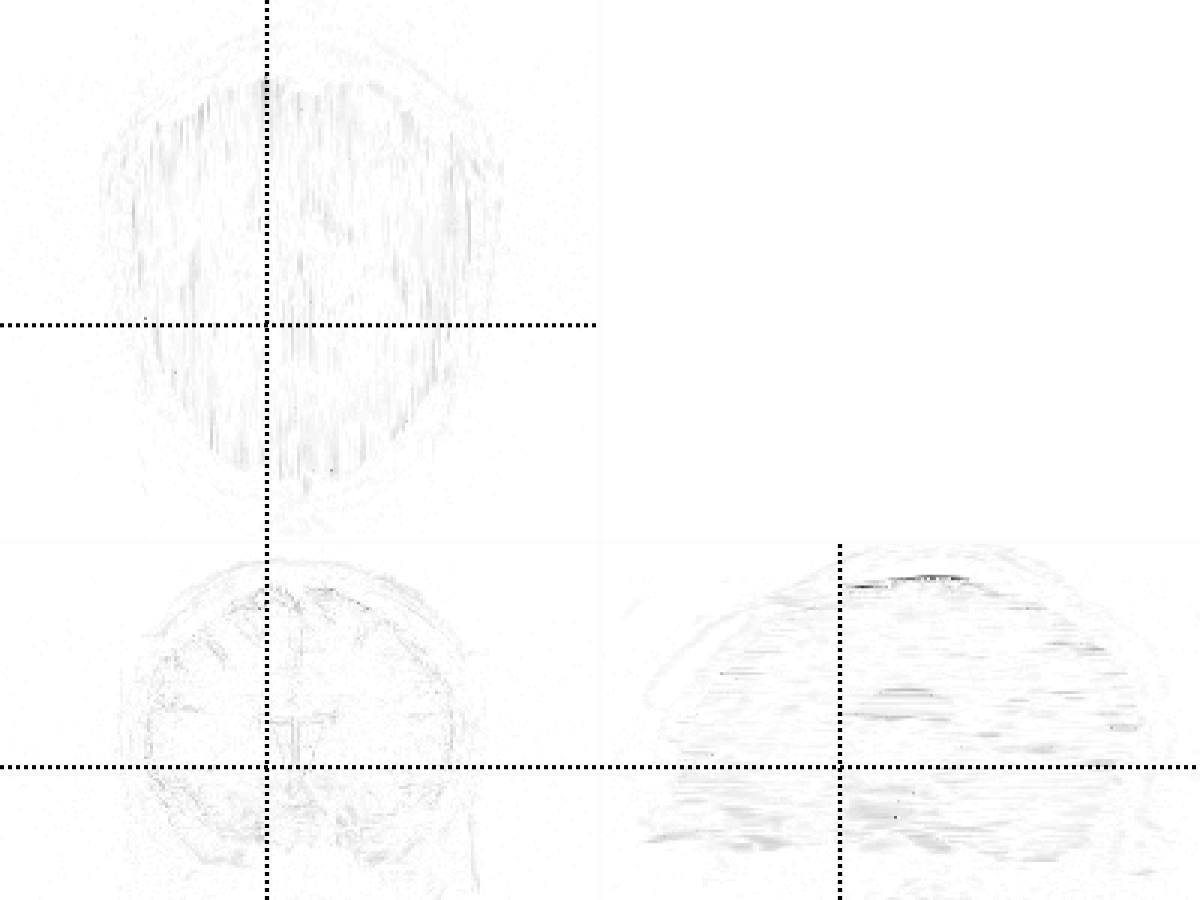} 
    \\
	\rottext{inhomogeneity $\bfb$}&
         &
      	\includegraphics[width=\iwidth]{./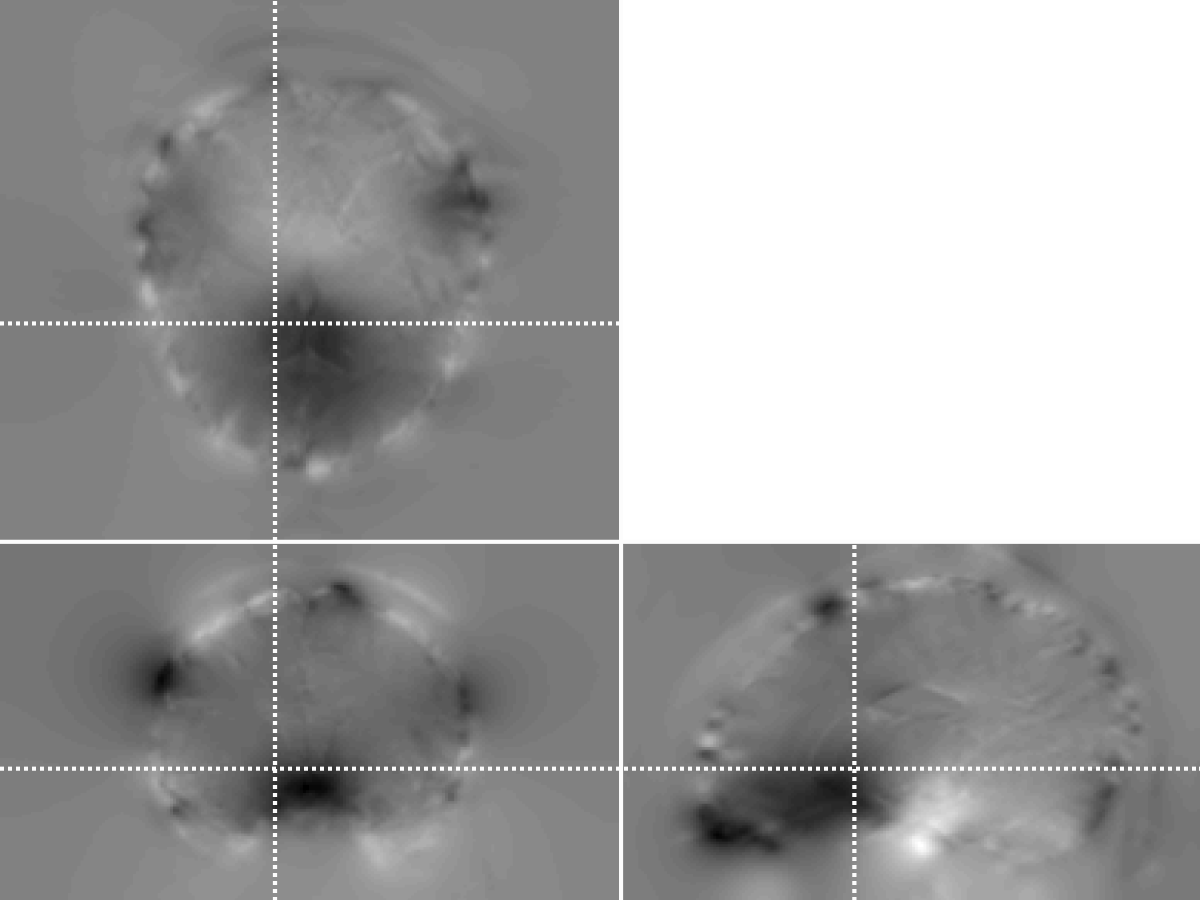} &
		\includegraphics[width=\iwidth]{./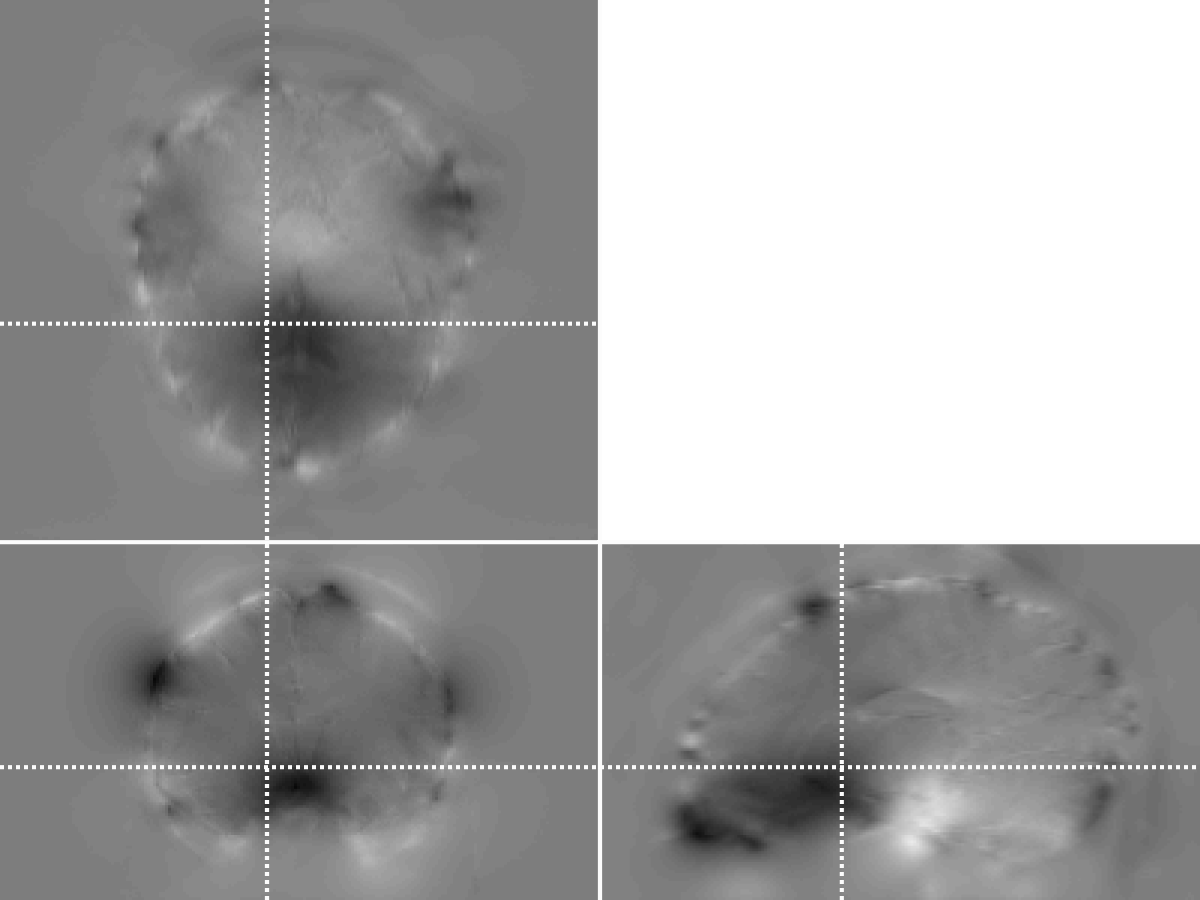} &
      	\includegraphics[width=\iwidth]{./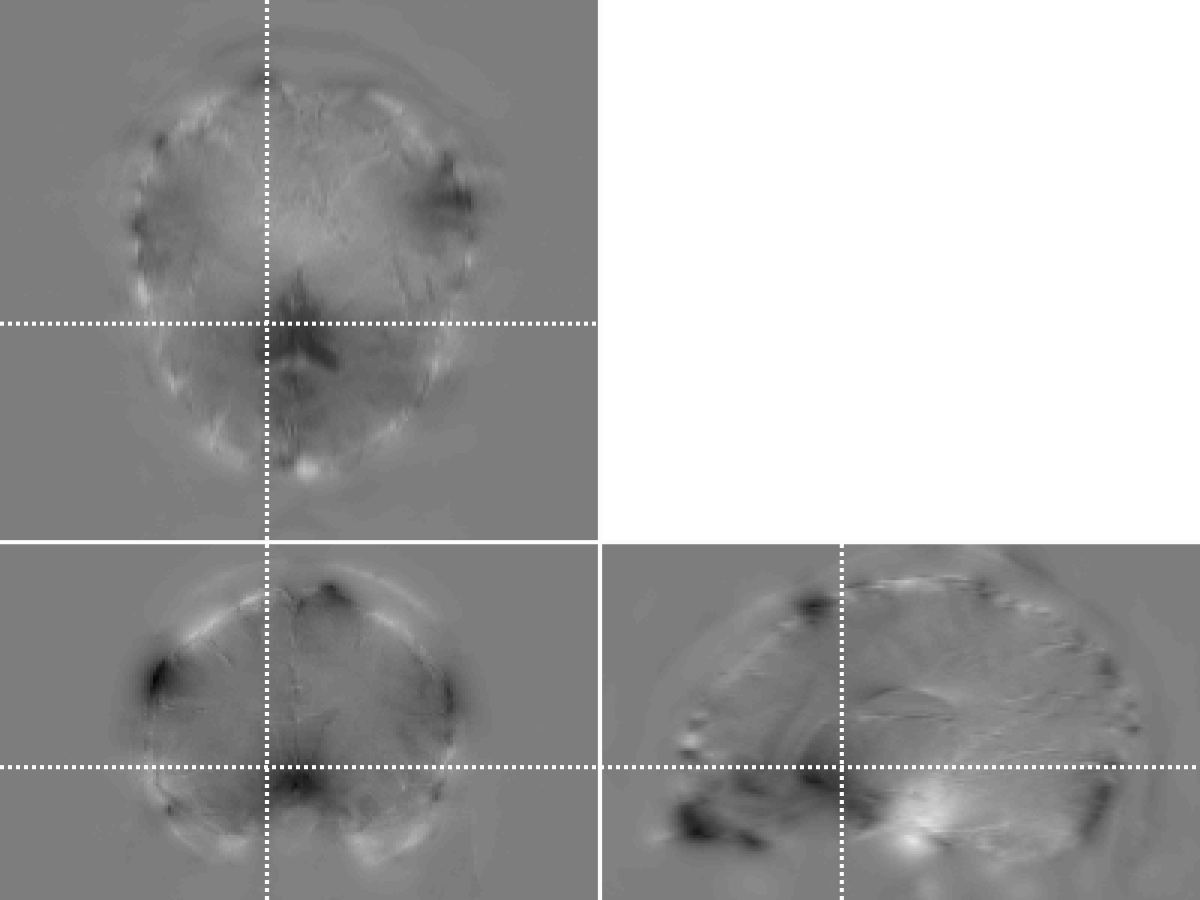} 
      	\\ \hline
      \end{tabular} 
  \caption{3D correction results for high-resolution EPI-MRI brain data provided by the Human Connectome Project~\cite{VanEssen2012}. Initial  data  (left)  and  results  of
three  reversed-gradient  based  correction  methods  are  visualized  using  orthogonal  slice  views. The color axis are chosen identically in each row. First  and  second
rows  show  initial  data  and  corrected  images  for  both  phase-encoding  directions.  Third  row  shows  the  absolute
difference between both images using an inverted color scale for improved visualization.  A significant reduction
of  the  image  difference  can  be  observed  for  all  correction  methods.  The  bottom  row  shows  the  estimated  field
inhomogeneity that is comparable among all correction techniques.}
  \label{fig:compare}
\end{figure}

\begin{table}[t]
 \caption{Comparison of the improvements in image similarity for different reversed-gradient methods applied to the 3D-MRI data with $\alpha=50$ and $\beta=10$ (in GN methods only). Image similarity between the initial and transformed blip-up and blip-down data is assessed using the sum of squared differences (SSD) distance~\cite{Modersitzki2009} (smaller value associated with more similarity) and normalized cross-correlation (NCC)~\cite{Modersitzki2009} (normalized to $[0,1]$ where $1$ is optimal).}
 \label{tab:compare}
 \centering
\scriptsize
 \begin{tabular}{|l|r|r|r|r|}   
  \hline
  & \multicolumn{1}{c|}{HySCO (cubic)~\cite{RuthottoEtAl2013hysco}} & \multicolumn{1}{c|}{HySCO (linear)~\cite{RuthottoEtAl2013hysco}} &
  \multicolumn{1}{c|}{Gauss-Newton-PCG} & \multicolumn{1}{c|}{ADMM}\\
  \hline\hline
  ${\rm SSD}\left(\bfb^0\right)$ & $7.395\cdot10^{10}$ & $7.395\cdot10^{10}$ & $7.395\cdot10^{10}$ & $7.395\cdot10^{10}$\rule{0pt}{2.6ex} 	\\
  ${\rm SSD}\left(\bfb^*\right)$ & $9.540\cdot10^{9}$ & $1.557\cdot10^{10}$ & $6.433\cdot10^{9}$	& $5.058\cdot10^{9}$ 				\\
  \hline
  reduction              & $87.1\%$            & $78.9\%$            & $91.3 \%$	 & $93.1 \%$          							\\
  \hline\hline
  ${\rm NCC}\left(\bfb^0\right)$ & $0.529$ & $0.529$ & $0.529$ & $0.529$\rule{0pt}{2.6ex} 	\\
  ${\rm NCC}\left(\bfb^*\right)$ & $0.924$ & $0.879$ & $0.955$	& $ 0.964$ 				\\
  \hline\hline
  runtime                & 2059 s            & 198 s             & 142 s              & 43 s 								\\
  \hline
\end{tabular}

\end{table}
All methods effectively correct for susceptibility artifacts rendering the corrected image pairs visually more similar with both proposed methods slightly outperforming HySCO; see residual images in
\cref{fig:compare} and \cref{tab:compare}. A notable difference is observed comparing the performance of HySCO with cubic B-spline and linear interpolation of the image data. While the former achieves a higher quality correction, the time-to-solution is around 34 minutes. With less than 3 minutes, the time-to-solution for the linear interpolation model is considerably lower, but it achieves considerably inferior correction results. Changing from the nodal discretization employed in HySCO to the
face-staggered discretization proposed here, leads to a highly accurate correction (see improvement in image similarity), even with a linear interpolation model. As to be expected, the results for GN-PCG and ADMM yield almost identical results.
The most striking difference between the different methods is the reduced time-to-solution. Both newly proposed methods outperform the existing approaches. In our comparison the fastest method is the parallelized implementation of ADMM, which yields a speedup factor of around 50x while the new GN-PCG yields a speedup factor of around 15x as compared to the default settings of HySCO.

\section{Summary And Conclusion} 
\label{sec:summary}
In this paper, we present two efficient methods for susceptibility
artifact correction of EPI-MRI. We consider a variational formulation of
a reversed gradient based correction scheme similar
to~\cite{RuthottoEtAlPMB2012,RuthottoEtAl2013hysco,HollandEtAl2010}. Our method requires one additional EPI-MRI acquisition with opposite phase-encoding direction and, thus, opposite distortion. We follow a discretize-then-optimize paradigm and propose a face-staggered discretization of the field inhomogeneity. This choice leads to a separable discrete distance function and constraints. While the overall optimization problem is, due to the smoothness regularizer, not separable we consider two optimization schemes that exploit the partially separable structure. 
\par
First, we propose a block-Jacobi preconditioner to be used in
Gauss-Newton-PCG optimization schemes. The preconditioner is designed to
exploit the respective block structure of the distance and penalty functions
and also accounts for parts of the smoothness regularizer. The
preconditioner is block-diagonal with tridiagonal structure and thus can
be computed in parallel and with linear complexity. Using a 2D example, we
demonstrate the improved clustering of eigenvalues as compared to the
cheaper Jacobi preconditioner; see \cref{fig:spectrum}. We also show
the effectiveness of the block-Jacobi preconditioner for solving the
Gauss-Newton system with very high accuracy; see \cref{fig:2Dbrain_pcg}. Furthermore, we show that it considerably
reduces the number of PCG-iterations when used in combination with a
GN-PCG using both a 2D and a 3D example; see \cref{fig:2Dbrain_pcg} and \cref{tab:3Dbrain_pcg}.
\par
Second, we split the separable and non-separable parts of the objective
function by adding an artificial variable and apply ADMM to compute a
saddle-point of the associated augmented Lagrangian. The resulting subproblems can be
solved efficiently. The first
subproblem, albeit being non-convex, is separable and can be broken down into several smaller problems with only
a few hundreds of unknowns that can be solved in parallel using sequential quadratic programming. The
second subproblem consists of minimizing a convex quadratic with a block-diagonal Hessian with BTTB
structure and, thus, can be solved efficiently and in parallel using DCTs. 
We provide a detailed convergence result that is similar to~\cite{WangEtAl2015} but exploits the smoothness of our problem.  We also derive a theoretical lower bound for the augmentation parameter in ADMM.
Using a numerical experiment, we 
compare different adaption strategies for the augmentation parameter in
\cref{fig:3Dbrain_admm}. In our experience, adaptive choice of the
parameter is possible, however, in view of the presented convergence result, we recommend using at least an empirically tuned lower bound to ensure convergence. We found that choosing a sufficiently large
lower bound is critical to ensure smoothness of the solution.
\par
The correction quality for both methods is comparable to
state-of-the-art methods as shown using one example in \cref{fig:compare}. The most striking difference is the reduced
time-to-solution for both methods. Using the proposed preconditioner in GN-PCG, runtime is reduced
by a factor of around 15 in this example using a straightforward
implementation in MATLAB.
The implementation of GN-PCG is freely available as version 2 of
\texttt{HySCO} as part of the ACID toolbox
(\url{http://www.diffusiontools.com/documentation/hysco.html}).
\par
A larger speedup factor of around 50 is achieved in our
experiments using a relatively simple parallel implementation of the
proposed ADMM method. Here, the separable structure of the computationally first, and most challenging, subproblem in ADMM is used by parallelizing over all image slices. Given our promising results, the ADMM method is an ideal candidate for implementation on massively parallel hardware such as Graphics Processing Units (GPU) and can be attractive for real-time applications such as~\cite{DagaEtAl2014}.

\section{Acknowledgements} 
\label{sec:acknowledgements}
We wish to thank Harald Kugel from the Department of Clinical Radiology, University Hospital M\"unster, Germany, for suggestions on a first draft of this manuscript and providing the 2D data used in our numerical experiments.
The 3D data were provided by the Human Connectome Project, WU-Minn Consortium (Principal Investigators: David Van Essen and Kamil Ugurbil; 1U54MH091657) funded by the 16 NIH Institutes and Centers that support the NIH Blueprint for Neuroscience Research; and by the McDonnell Center for Systems Neuroscience at Washington University.
We also like to thank Siawoosh Mohammadi, University Hospital Hamburg Eppendorf, Germany, for fruitful discussions and helpful advice.
\bibliographystyle{siamplain}
\footnotesize
\bibliography{EPI}
\end{document}